\numberwithin{equation}{section}
\let\cal\mathcal
\def\Ascr{{\cal A}}
\def\Cscr{{\cal C}}
\def\Fscr{{\cal F}}
\def\Lscr{{\cal L}}
\def\Oscr{{\cal O}}
\def\Rscr{{\cal R}}
\def\Sscr{{\cal S}}
\def\Vscr{{\cal V}}
\let\blb\mathbb
\def \ZZ{{\blb Z}}
\def\Id{\operatorname{id}}
\def\Res{\operatorname{Res}}
\def\Mod{\operatorname{Mod}}
\def\rad{\operatorname {rad}}
\def\Rep{\operatorname {Rep}}
\def\Ext{\operatorname {Ext}}
\def\Hom{\operatorname {Hom}}
\def\End{\operatorname {End}}
\def\im{\operatorname {im}}
\def\ker{\operatorname {ker}}
\def\End{\operatorname {End}}
\def\r{\rightarrow}
\def\exist{\exists}
\DeclareMathOperator{\Ind}{Ind}
\newtheorem{lemma}{Lemma}[section]
\newtheorem{proposition}[lemma]{Proposition}
\newtheorem{theorem}[lemma]{Theorem}
\newtheorem{corollary}[lemma]{Corollary}
\newtheorem{lemmas}{Lemma}[subsection]
\newtheorem{propositions}[lemmas]{Proposition}
\newtheorem{theorems}[lemmas]{Theorem}
\newtheorem{corollarys}[lemmas]{Corollary}
\theoremstyle{definition}
\newtheorem{examples}[lemmas]{Example}
\newtheorem{definitions}[lemmas]{Definition}
\theoremstyle{remark}
\newtheorem{remarks}[lemmas]{Remark}
\newdimen\uboxsep \uboxsep=1ex
\def\uboxn#1{\vtop to 0pt{\hrule height 0pt depth 0pt\vskip\uboxsep
\hbox to 0pt{\hss #1\hss}\vss}}
\def\uboxs#1{\vbox to 0pt{\vss\hbox to 0pt{\hss #1\hss}
\vskip\uboxsep\hrule height 0pt depth 0pt}}
\def\fnd{\underline{\operatorname{end}}}
\let\en\fnd
\let\pt\otimes
\def\cu{\operatorname{Cube}}
\def\cus{\operatorname{cube}}
\def\CoMod{\operatorname{CoMod}}
\let\Cube\cu
\def\Vect{\operatorname{Vect}}
\def\NRed{\operatorname{NRed}}
\title{Representations of non-commutative quantum groups}
\author{Benoit Kriegk}
\email{benoit.kriegk@univ-st-etienne.fr}
\address{Faculté des Sciences et Techniques\\
Laboratoire de Math\'ematiques de l'Universit\'e de Saint-Etienne\\
23, rue du Docteur Paul Michelon\\
42023 SAINT-ETIENNE CEDEX 02
}
\author{Michel Van den Bergh}
\email{michel.vandenbergh@uhasselt.be}
\address{Universiteit Hasselt\\ Universitaire Campus\\ 3590 Diepenbeek}
\thanks{The second author is a senior researcher at the FWO}
\keywords{Quantum groups, $N$-Koszulity, distributivity}
\subjclass{Primary 16T10; Secondary 16S37}
\begin{document}
\begin{abstract} We discuss the representation theory of the bialgebra $\fnd(A)$
introduced by Manin. As a side result we give a new proof that Koszul algebras
are distributive and furthermore we show that some well-known $N$-Koszul algebras
are also distributive.
\end{abstract}
\maketitle
\section{Introduction}
Throughout we work over a ground field $k$. If $A$ is a $\ZZ$-graded algebra then there is a bialgebra $\fnd(A)$
coacting on $A$ which is universal in an appropriate sense (see
\cite{Ma} and also \S\ref{ref-4-24} below). For example if $A=k[x,y]$ then
\begin{equation}
\label{ref-1.1-0}
\fnd(A)=k[a,b,c,d]/(ac-ca,ad-da+bc-cb,bd-db)
\end{equation}
 and the comultiplication and coaction can be
written concisely as
\begin{align*}
\Delta \begin{pmatrix}a&b\\ c&d \end{pmatrix}&=\begin{pmatrix}a&b\\ c&d \end{pmatrix}
\otimes \begin{pmatrix}a&b\\ c&d \end{pmatrix}\\
\delta \begin{pmatrix}x\\ y\end{pmatrix}&=
\begin{pmatrix}a&b\\ c&d \end{pmatrix}
\otimes \begin{pmatrix}x\\ y\end{pmatrix}
\end{align*}
We see in particular that there is a bialgebra morphism from $\fnd(A)$
to $\Oscr(M_{2\times 2})$, the coordinate ring of the monoid of
$2\times 2$-matrices. So is tempting in this case to think of the bialgebra
$\fnd(A)$ as a non-commutative version of this monoid.

Recently the bialgebra $\fnd(A)$ appeared as a vehicle for deriving
combinatorial identities such as the quantum MacMahon Master theorem
\cite{HKL,HL1}. In order to understand how far such methods can be
pushed it is useful to describe the representation theory of
$\fnd(A)$.\footnote{Unless otherwise specified, representations of a
  bialgebra are coalgebra representations.}.
In this paper we will accomplish
this for a large class of algebras.

Our results will be for so-called \emph{distributive algebras}. Let $A=TV/(R)$
with $R\subset V^{\otimes N}$. We say that $A$ is distributive if for
all $n$ the subspaces $V^{\otimes a}\otimes R\otimes V^{\otimes n-N-a}$ of
$V^{\otimes n}$ generated a distributive lattice. If $N=2$ then it is well
known that distributivity is the same as Koszulity \cite{BF} (in Theorem \ref{ref-3.1-22} below we
give an alternative proof of this result).
For $N>2$ distributivity is related to ``$N$-Koszulity'', a higher Koszul
property introduced by Berger \cite{Berger}. 

Recall that $A$ is $N$-Koszul if
the free modules occuring in the minimal resolution of $k$ are generated in degrees
$0,1,N-1,N,2N-1,2N$, etc\dots. One has the following implications
\begin{equation}
\label{ref-1.2-1}
\text{Distributivity}+\text{``extra condition"}\Rightarrow \text{$N$-Koszul}
\Rightarrow\text{``extra condition"}
\end{equation}
Here the ``extra condition'' is a suplementary condition which is vacuous
for $N=2$ and for $N=3$ reduces to
\[
R\otimes V\otimes V\cap V\otimes V\otimes R\subset V\otimes R\otimes V
\]
It is unfortunately unknown if the first implication in \eqref{ref-1.2-1} is
reversible.

To state our results we recall the definition of the category
$\Cube_n$ introduced by Polishchuk and Positselski in \cite{PP}.
$\Cube_0$ is the category of vector spaces, $\Cube_1$ is the
category of vector space homomorphisms, $\Cube_2$ is the category of
commutative squares etc\dots. In general $\Cube_n$ is given by the
representations of a hypercube $Q_n$ with commutative faces. We put
$\Cube_\bullet=\bigoplus_n \Cube_n$. Then as observed in \cite{PP}
$\Cube_\bullet$ carries a natural monoidal structure. See
\S\ref{ref-3.1-2}.

The following
is our main result concerning the representation theory of $\fnd(A)$.
\begin{proposition}  (Proposition \ref{ref-5.3-37} below)
Assume that $A$ is distributive. Then there is a monoidal functor
$\Fscr_A:\Cube_{\bullet}\r\CoMod(\fnd(A))$ which  induces
an equivalence of monoidal categories
\[
\bar{\Fscr}_A:\Cube_{\bullet}/\Sscr_A\cong \CoMod(\fnd(A))
\]
where $\Sscr_A$ is
the localizing subcategory  of $\Cube_{\bullet}$ generated by  the
simples $S$ such that $\bar{\Fscr}_A(S)=0$.
\end{proposition}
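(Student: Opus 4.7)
The plan is to construct $\Fscr_A$ directly from the distributive cube structure carried by $V^{\otimes n}$ and then identify $\CoMod(\fnd(A))$ as a Gabriel/Serre quotient of $\Cube_\bullet$ using Manin's universal property of $\fnd(A)$. This breaks into three tasks: the construction of $\Fscr_A$, essential surjectivity of the induced $\bar{\Fscr}_A$, and its fully faithfulness.

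For the construction, I would invoke distributivity of $A$ as follows: the subspaces $V^{\otimes i}\otimes R\otimes V^{\otimes n-N-i}\subset V^{\otimes n}$ assemble, via iterated sums and intersections, into a diagram indexed by the hypercube $Q_{n-N+1}$, and distributivity is exactly what forces this diagram to be a commutative cube, i.e.\ an object $\tau_n\in \Cube_{n-N+1}$. Because $\fnd(A)$ coacts on $V$ preserving $R$ by construction, every vertex of $\tau_n$ is a subcomodule of $V^{\otimes n}$, so $\tau_n$ canonically lifts to a cube in $\CoMod(\fnd(A))$. This defines $\Fscr_A$ on the monoidal generators of $\Cube_\bullet$, and compatibility with tensor products of comodules (using the bialgebra, and not merely algebra, structure of $\fnd(A)$) extends this uniquely to a monoidal functor on all of $\Cube_\bullet$.

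Essential surjectivity of $\bar{\Fscr}_A$ should then follow from the Manin/Tannakian side: every $\fnd(A)$-comodule is by construction a subquotient of a direct sum of tensor powers of $V$, and under distributivity every such subquotient appears as a vertex of some $\Fscr_A(C)$, $C\in\Cube_\bullet$. Hence every comodule lies, up to isomorphism, in the essential image.

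The main obstacle will be fully faithfulness of $\bar{\Fscr}_A$. My strategy is to compute $\Hom$-spaces in $\CoMod(\fnd(A))$ between tensor powers of $V$ using the explicit presentation of $\fnd(A)$ as the universal bialgebra whose matrix coefficients act on $V^{\otimes n}$ while preserving the $R$-subspaces, and to match these against $\Hom$-spaces in $\Cube_\bullet/\Sscr_A$ via the Serre quotient construction. Since $\Sscr_A$ is generated by simples going to zero, the ``extra'' morphisms visible on the comodule side should correspond exactly to morphisms in $\Cube_\bullet$ whose kernel and cokernel lie in $\Sscr_A$; verifying that this is the correct quotient will require a dévissage on a length filtration of cube representations that reduces to the case of simples. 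Once this is in place, monoidality of $\bar{\Fscr}_A$ is automatic, since monoidality of $\Fscr_A$ makes $\Sscr_A$ a tensor ideal.
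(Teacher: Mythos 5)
Your construction step misidentifies the role of distributivity. The diagram $(R_I^{(n)})_I$ of intersections (with reverse-inclusion maps) is always a commutative cube, i.e.\ an object of $\Cube_{n-N+1}$; no hypothesis on $A$ is needed for that. What distributivity actually buys is exactness: by the Polishchuk--Positselski criterion (Theorem~\ref{ref-3.1.1-3}), the right-exact functor $F_A$ sending $P^{(n-N+1)}_I\mapsto R^{(n)}_I$ is exact precisely when the subspaces $R^{(n)}_i$ generate a distributive lattice, and the whole proposition hinges on having an \emph{exact} $\Fscr_A$, not merely on the existence of one cube $\tau_n$. Without noticing this you have no control over how $\Fscr_A$ behaves on kernels and cokernels, which is where everything interesting happens.

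The essential surjectivity and full faithfulness steps are too vague to be checked, and as stated they miss the structural input the argument actually requires. The paper first identifies $\CoMod(\fnd(A)_n)$ with $\Mod(Z_n(A))$ for $Z_n(A)=\{\varphi\in\End(V^{\otimes n}):\varphi(R^{(n)}_i)\subset R^{(n)}_i\}$ (Lemma~\ref{ref-4.1-27}, Proposition~\ref{ref-4.2-28}), and then uses a basis splitting the distributive lattice to compute $Z_n(A)$ explicitly: its indecomposable projectives are the $R^{(n)}_I$ for admissible $I$, with $\Hom_{Z_n(A)}(R^{(n)}_J,R^{(n)}_I)$ one-dimensional iff $J\supset I$ (Proposition~\ref{ref-5.1-34}). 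This is precisely what shows $\Mod(Z_n(A))\cong\Rep(Q_{A,n-N+1})$ for the full subquiver $Q_{A,n-N+1}$ of $Q_{n-N+1}$ on the admissible vertices (Corollary~\ref{ref-5.2-36}). Once that identification is in place, the Serre quotient $\Cube_{n-N+1}/\Sscr_{A,n-N+1}$ is recognised as restriction to a full subquiver, and the equivalence drops out directly; no d\'evissage on length is needed, nor is a separate essential-surjectivity argument. Your proposed ``compute $\Hom$-spaces between tensor powers of $V$ and match against the quotient'' route would, if carried out, end up having to reprove Proposition~\ref{ref-5.1-34} in disguise. As it stands, that step is the gap: you have not exhibited either why the functor from the quotient is full, or why every simple comodule lies in the image.
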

This proposition yields a description of the representation theory
of $\fnd(A)$ in terms of the subquivers of $(Q_n)_n$ whose vertices
correspond to the simples that are not annihilated by
$\Fscr_A$.

It remains to determine the simples that are killed by $\Fscr_A$. If we
assume the extra condition then this is a tractable problem.
\begin{proposition} (Corollary \ref{ref-5.4-40} below) Assume that $A$
  satisfies the extra condition (besides being distributive). Then the
  monoidal category $\CoMod(\fnd(A))$ is entirely determined by the
dimensions of the vector spaces
\[
 \bigcap_a V^{\otimes a}\otimes R\otimes V^{\otimes
    n-a-N}
\]  If $N=2$ then $\CoMod(\fnd(A))$ is entirely determined
  by the Hilbert series of $A$.
\end{proposition}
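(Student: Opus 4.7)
The plan is to build on Proposition~\ref{ref-5.3-37}, which reduces the problem to determining, for each $n$, which simple objects of $\Cube_n$ lie in $\Sscr_A$ --- equivalently, which are annihilated by the functor $\bar{\Fscr}_A$. The simples of $\Cube_n$ are the one-dimensional representations supported at a single vertex of the hypercube $Q_n$, and $\Fscr_A$ realises such a cube as the distributive lattice of subspaces of $V^{\otimes n}$ generated by $R_a := V^{\otimes a}\otimes R\otimes V^{\otimes n-a-N}$. A simple is killed precisely when the corresponding elementary cell of this lattice vanishes, so the kernel of $\bar{\Fscr}_A$ is encoded entirely by the Hilbert function of the lattice, and this in turn is determined by the dimensions $\dim\bigcap_{a\in I}R_a$ for arbitrary index sets $I\subset\{0,\dots,n-N\}$.

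Next I would reduce these ``partial'' intersection dimensions to the total ones appearing in the statement. For an interval $I=\{a,a+1,\dots,a+k\}$ one immediately has
\[
\bigcap_{i\in I}R_i \;=\; V^{\otimes a}\otimes\Big(\bigcap_{b=0}^{k}V^{\otimes b}\otimes R\otimes V^{\otimes k-b}\Big)\otimes V^{\otimes n-a-k-N},
\]
whose dimension is $c_{k+N}\cdot(\dim V)^{n-k-N}$, where $c_m$ denotes the dimension of the total intersection $\bigcap_b V^{\otimes b}\otimes R\otimes V^{\otimes m-b-N}$ in $V^{\otimes m}$. The substantive content of the first assertion is therefore to handle non-interval $I$: the extra condition, together with distributivity of the ambient lattice, should allow an arbitrary partial intersection to split along its gaps and thereby reduce to a product of interval intersections, hence ultimately to the $c_m$. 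This combinatorial reduction is where the extra condition is genuinely used, and it is the main obstacle of the proof.

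For $N=2$ the extra condition is vacuous, so distributivity of $A$ is equivalent to Koszulity by Theorem~\ref{ref-3.1-22}. Koszul duality then yields $c_n = \dim (A^!)_n$, and the classical functional equation $h_A(t)\,h_{A^!}(-t)=1$ recovers $h_{A^!}$ --- and hence the sequence $(c_n)_n$ --- from $h_A$ alone. Combined with the previous paragraph this shows that $\CoMod(\fnd(A))$ is determined by the Hilbert series of $A$.
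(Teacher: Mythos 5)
Your overall strategy matches the paper's: invoke Proposition~\ref{ref-5.3-37} to reduce to determining which simples $S^{(l)}_I$ are killed by $F_A$, express $\dim F_A S^{(l)}_I$ in terms of the dimensions $\dim R^{(n)}_I$ (via inclusion--exclusion in the distributive lattice, or equivalently the finite projective resolution of $S^{(l)}_I$), and finally reduce to the total intersections $J_n$. The $N=2$ endgame via the Koszul duality functional equation is the same computation the paper invokes via the Koszul complex.

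However, the key reduction from arbitrary $I$ to intervals is precisely the step you defer, and the mechanism you sketch for it would not work as stated. You propose that ``the extra condition, together with distributivity of the ambient lattice, should allow an arbitrary partial intersection to split along its gaps,'' but this conflates two distinct operations with different sources. A hole of size $\le N-2$ in $I$, say delimited by $i<j\in I$, cannot be split across; instead the extra condition $R^{(n)}_i\cap R^{(n)}_j\subset R^{(n)}_l$ (for $i\le l\le j$) forces $R^{(n)}_I=R^{(n)}_{I\cup\{l\}}$, so one \emph{fills in} all short holes, replacing $I$ by $\bar I$. Once $I$ has only holes of size $\ge N-1$ (or missing endpoints), the relevant $R^{(n)}_a$ on opposite sides of such a hole constrain disjoint tensor slots of $V^{\otimes n}$, so $R^{(n)}_I$ factors as a tensor product of interval intersections with copies of $V$; this step is a structural fact about $\otimes$ and uses neither the extra condition nor distributivity. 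Distributivity is used earlier, to guarantee that $F_A$ is exact, which is what lets you recover $\dim F_A S^{(l)}_I$ from the $\dim R^{(n)}_J$. Supplying this two-step fill-then-split argument is exactly what is missing from your proposal.
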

In \S\ref{ref-6-41} we discuss the case of the symmetric algebra. We
prove
\begin{proposition} (Proposition \ref{ref-6.1-42} below)
Let $A=SV$ where $V$ is an $n$-dimensional vector space. Then the
simple $\fnd(A)$-representations when viewed as $M_{n\times n}$ representations are
of the form $S^{\lambda/\mu}V$ with $\lambda/\mu$ running through the set of rim hooks.
\end{proposition}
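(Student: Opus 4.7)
My plan is to apply the two preceding propositions and then carry out a combinatorial identification. Since $A=SV$ is Koszul (so $N=2$ and the extra condition is automatic), Corollary~\ref{ref-5.4-40} tells us that $\CoMod(\fnd(SV))$ depends only on the Hilbert series of $SV$, and Proposition~\ref{ref-5.3-37} realises it as $\Cube_\bullet/\Sscr_{SV}$. The task therefore reduces to determining which simples of $\Cube_\bullet$ survive under $\bar{\Fscr}_{SV}$ and computing their images as $M_{n\times n}$-representations through the bialgebra map $\fnd(SV)\to \Oscr(M_{n\times n})$.

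Next I would set up the combinatorics. A simple of $\Cube_m$ is indexed by a vertex of the hypercube $Q_m$, i.e.\ a subset $I\subset\{1,\ldots,m\}$. To $I$ I attach the composition $(b_1,\ldots,b_r)$ of $m+1$ whose parts record the sizes of the maximal runs of tensor positions glued together by the antisymmetrisations placed at the sites of $I$. Because $R=\wedge^2 V$ for the symmetric algebra, the corresponding vertex subspace in $V^{\otimes(m+1)}$ is $V_I=\bigotimes_i\wedge^{b_i}V$. On the other hand, the classical bijection between subsets of $\{1,\ldots,m\}$ and rim hooks of size $m+1$ (trace the boundary of the rim hook from the top-right cell, recording a ``down'' step for each element of $I$ and a ``left'' step for each element of $\{1,\ldots,m\}\setminus I$) assigns to $I$ a unique rim hook $\rho_I$ whose column-height sequence is exactly $(b_1,\ldots,b_r)$.

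The crux of the proof is then to establish $\bar{\Fscr}_{SV}(S_I)\cong S^{\rho_I}V$ as $GL(V)$-representations. Under $\bar{\Fscr}_{SV}$ the simple at $I$ corresponds to the ``local part'' of the cube representation at the vertex $I$, i.e.\ the quotient of $V_I$ by the sum of its refinements $V_{I'}$ for $I'\supsetneq I$; concretely, this amounts to quotienting $\bigotimes_i\wedge^{b_i}V$ by the images of the merging maps $\wedge^{b_j}V\otimes\wedge^{b_{j+1}}V\twoheadrightarrow\wedge^{b_j+b_{j+1}}V$. I would recognise this quotient as $S^{\rho_I}V$ either via the dual Jacobi--Trudi determinantal formula for $s_{\lambda/\mu}$ in elementary symmetric polynomials (which telescopes cleanly for a rim hook) or, equivalently, by passing through Schur--Weyl duality and using the Specht module description of skew Young modules of border strip shape. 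I expect this identification of the quotient with the rim hook Schur functor to be the main obstacle of the proof.

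Finally, non-vanishing is automatic: $S^{\rho_I}V=0$ iff some $b_i$ exceeds $\dim V=n$ iff $\wedge^{b_i}V=0$ iff $V_I$ already vanishes. Consequently the simples of $\CoMod(\fnd(SV))$ are parameterised precisely by the rim hooks fitting in at most $n$ rows, and their restrictions to $M_{n\times n}$-representations are the corresponding $S^{\rho}V$, as claimed. A comparison of graded dimensions against the Hilbert series of $SV$ (as furnished by Corollary~\ref{ref-5.4-40}) provides a useful sanity check that the resulting list is complete.
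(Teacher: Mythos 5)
Your reduction to computing $\Fscr_A(S^{(m)}_I)$ via Proposition~\ref{ref-5.3-37}, and your bijection between subsets $I\subset\{1,\ldots,m\}$ and rim hooks with $m+1$ boxes, are both correct; so is identifying $\bar{\Fscr}_{SV}(S^{(m)}_I)$ with the ``local part'' $C^{(m+1)}_I=R^{(m+1)}_I\big/\sum_{J\supsetneq I}R^{(m+1)}_J$. But you flag the identification of this quotient with $S^{\rho_I}V$ as ``the main obstacle'' and then only gesture at a Jacobi--Trudi or Specht-module argument. That is a genuine gap: the dual Jacobi--Trudi determinant identifies characters, not $GL(V)$-modules, and the Specht-module route needs a separate comparison you do not supply; both are much heavier than what the problem requires. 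The paper's route is simply to use distributivity together with the second isomorphism theorem to turn the quotient into an image,
\[
C^{(m+1)}_I\;\cong\;\im\Bigl(R^{(m+1)}_I\hookrightarrow V^{\otimes(m+1)}\twoheadrightarrow V^{\otimes(m+1)}\Big/\sum_{i\notin I}R^{(m+1)}_i\Bigr),
\]
and then to read off that $R^{(m+1)}_I=\bigotimes_i\wedge^{b_i}V$ while $V^{\otimes(m+1)}/\sum_{i\notin I}R^{(m+1)}_i=\bigotimes_j S^{c_j}V$, the $(b_i)$ and $(c_j)$ being the two interlocking compositions of $m+1$ determined by $I$. Since $S^{\lambda/\mu}V$ is \emph{defined} as the image of $\bigotimes_i\wedge^{\lambda'_i-\mu'_i}V$ inside $\bigotimes_j S^{\lambda_j-\mu_j}V$, this image is immediately the skew Schur module of the rim hook whose column heights are the $b_i$ and row lengths the $c_j$; no combinatorial identity is needed.

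One further slip: you describe the quotient of $\bigotimes_i\wedge^{b_i}V$ by ``the images of the merging maps $\wedge^{b_j}V\otimes\wedge^{b_{j+1}}V\twoheadrightarrow\wedge^{b_j+b_{j+1}}V$,'' but those images do not lie in $V_I$ at all. The subspaces $V_{I\cup\{l\}}\subset V_I$ that must be summed and quotiented by are the images of the \emph{inclusions} $\wedge^{b_j+b_{j+1}}V\hookrightarrow\wedge^{b_j}V\otimes\wedge^{b_{j+1}}V$; recognising these as the containments $R^{(m+1)}_{I\cup\{l\}}\subset R^{(m+1)}_I$ is exactly what sets up the second-isomorphism-theorem step above, so getting the direction right is not merely cosmetic.
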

Recall that a rim hook is
a connected skew diagram containing no $2\times 2$-squares. An example is the following
\setlength{\unitlength}{0.5cm}
\begin{equation}
\begin{picture}(6,6)(0,0)
\put(0,0){\line(1,0){3}}
\put(0,0){\line(0,1){1}}
\put(3,0){\line(0,1){3}}
\put(3,3){\line(1,0){3}}
\put(6,3){\line(0,1){3}}
\put(0,1){\line(1,0){2}}
\put(2,1){\line(0,1){3}}
\put(2,4){\line(1,0){3}}
\put(5,4){\line(0,1){2}}
\put(5,6){\line(1,0){1}}
\end{picture}
\end{equation}
In the last section of this paper we discuss some examples of
$N$-Koszul algebras which we can show to be distributive. It
is noteworthy that we do not know an example of an $N$-Koszul algebra
which is not distributive. On the other hand we also do not know (for
$N>2$) a homological characterization of distributivity.

We show distributivity of the following algebras
\begin{itemize}
\item Algebras derived from Koszul algebras. This is more or less a
  tautology but it gives a quick proof that the generalized symmetric
  algebra $TV/\wedge^N V$ introduced by Berger is $N$-Koszul.
\item The deformed Yang-Mills algebras introduced by Connes and Dubois-Violette in
\cite{CDV1,CDV2}. We show that these algebras are in fact ``confluent'' (see \cite{Berger2} and
also \S\ref{ref-7.2-47} below).
\item The 3-dimensional cubic Artin-Schelter regular algebras of ``Type A''.
This proof also relies on confluence but in a more sophisticated way. We rely
on a generalization of the $I$-type condition which was introduced in~\cite{TVdB}.
\end{itemize}

\section{The category $\Cube_n$}
\subsection{Generalities}
\label{ref-3.1-2}
The quiver $Q_{n}$ is the quiver with relations
which has a vertex $x_I$ for every  $I\subset
\{1,\ldots,n\}$ and arrows $x_{IJ}:x_I\r x_J$ for every $I\subset
J$.  The relations are given by $x_{JK}x_{IJ}=x_{IK}$ (we write paths
in functional notation). If $n\le 0$ then we declare
$Q_{n}$ to be a single vertex $x_\emptyset$ without arrows.

We denote the projective $Q_n$ representation corresponding to $x_I$
by $P^{(n)}_I$ and the corresponding simple representation by
$S_I^{(n)}$. The category of $Q_{n}$-representations (not
necessarily finite dimensional) is denoted by $\cu_n$. The full
subcategory consisting of finite dimensional representations is
denoted by $\cus_n$. An object in $\Cube_n$ is generally written as
$(X_I)_I$ where $I$ runs through the subsets of $\{1,\ldots,n\}$. We
recall the following result.
\begin{theorems} \label{ref-3.1.1-3} \cite[Lemma 9.1]{PP} A collection of subobjects
  $R_1,\ldots,R_n\subset X$ of an object $X$ in a $k$-linear abelian
  category $\Cscr$ generates a distributive lattice iff
the right exact functor $F:\cus_n\r \Cscr$ defined by $F(P_\emptyset)=X$ and
  $F(P_{\{i\}}\r P_\emptyset)=R_i\hookrightarrow X$ is exact.
\end{theorems}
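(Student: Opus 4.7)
The plan is to interpret the exactness of $F$ as the faithful transport of the distributive lattice $\text{Sub}(P_\emptyset^{(n)})$ into $\text{Sub}(X)$. The starting observation is that $\text{Sub}(P_\emptyset^{(n)})$ is itself the free distributive lattice on the $n$ generators $[P_{\{i\}}^{(n)}]$: a subrepresentation $W=(W_J)_J\subseteq P_\emptyset^{(n)}$ has each $W_J$ either $0$ or $(P_\emptyset^{(n)})_J=k$, and so is uniquely determined by its support $\{J:W_J\neq 0\}$, which must be up-closed in $2^{\{1,\dots,n\}}$. Intersections and sums of subobjects correspond to intersections and unions of supports, so in particular $P_I^{(n)}=\bigcap_{i\in I}P_{\{i\}}^{(n)}$ and every subobject of $P_\emptyset^{(n)}$ is a sum of such $P_I^{(n)}$'s.

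The forward implication is then immediate: any exact functor between abelian categories preserves finite limits and colimits, hence carries intersections (pullbacks) and sums (images of coproducts) of subobjects to intersections and sums. So $F$ induces a lattice homomorphism from $\text{Sub}(P_\emptyset^{(n)})$ onto the sublattice of $\text{Sub}(X)$ generated by the $R_i$; as a homomorphic image of a distributive lattice, this sublattice is itself distributive.

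For the converse I would construct $F$ by first defining it on the full subcategory of projectives via $F(P_I^{(n)}) := \bigcap_{i\in I} R_i$, sending each inclusion $P_J^{(n)}\hookrightarrow P_I^{(n)}$ (for $I\subseteq J$) to the corresponding inclusion of intersections in $X$. This is functorial because $\Hom(P_J^{(n)},P_I^{(n)})$ is one-dimensional---spanned by this inclusion when $I\subseteq J$, and zero otherwise---so all composition relations are automatically preserved. Extending right-exactly via cokernels of projective presentations then produces $F$ on all of $\cus_n$.

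The main obstacle is verifying that this $F$ is actually exact. My strategy is to show $L_1F=0$. Since $\cus_n$ has finite global dimension and only finitely many simples $S_I^{(n)}$, d\'evissage reduces the question to checking $L_1F(S_I^{(n)})=0$ for each $I$. Using the projective resolution of $S_I^{(n)}$ that begins $\bigoplus_{j\notin I}P_{I\cup\{j\}}^{(n)}\to P_I^{(n)}\to S_I^{(n)}\to 0$, this amounts to verifying that $F$ preserves the kernel of the sum map. Analyzing this kernel inductively through the lattice identities in $\text{Sub}(P_\emptyset^{(n)})$ reduces the problem to whether the distributivity identity $W\cap(W_1+W_2)=(W\cap W_1)+(W\cap W_2)$ is transported faithfully to an equality between intersections of the $R_i$'s in $\text{Sub}(X)$---which is exactly the hypothesis that the $R_i$ generate a distributive lattice. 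Hence exactness follows.
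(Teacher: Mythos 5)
The paper does not prove this theorem at all—it simply cites \cite[Lemma 9.1]{PP}—so there is no in-paper argument to compare against; I will evaluate your proposal on its own merits.

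Your forward direction is complete and correct: an exact functor preserves pullbacks and images, hence carries the lattice $\operatorname{Sub}(P_\emptyset^{(n)})$ (the free bounded distributive lattice on the $[P_{\{i\}}^{(n)}]$) onto the sublattice of $\operatorname{Sub}(X)$ generated by the $R_i$, which is then distributive as a lattice quotient of a distributive lattice. The construction of $F$ on projectives via $P_I^{(n)}\mapsto\bigcap_{i\in I}R_i$ and the reduction of exactness to $L_1F(S_I^{(n)})=0$ via d\'evissage on length (finiteness of global dimension is not actually needed, only finiteness of length) are also fine.

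The gap is in the last step of the converse. You write that ``analyzing this kernel inductively through the lattice identities \dots reduces the problem to'' a single distributivity identity, but this is precisely the non-trivial computation and it is not carried out. To make it a proof one should take the explicit Koszul-type resolution $C_p=\bigoplus_{|J\setminus I|=p}P_J^{(n)}$ of $S_I^{(n)}$ (which is a resolution because its value at each vertex $K\supseteq I$ is the reduced simplicial chain complex of the simplex on $K\setminus I$), so that $L_1F(S_I^{(n)})=H_1\bigl(\bigoplus_{|J\setminus I|=2}R_J\to\bigoplus_{j\notin I}R_{I\cup\{j\}}\to R_I\bigr)$, and then prove exactness at the middle by induction on the number of indices $j\notin I$. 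The inductive step needs an element-free (snake-lemma) argument producing the short exact sequence
\[
0\to K_{m-1}\to K_m\to R_{I\cup\{m\}}\cap\sum_{j<m}R_{I\cup\{j\}}\to 0,
\]
after which distributivity identifies the right-hand term with $\sum_{j<m}R_{I\cup\{j,m\}}$, which is exactly the contribution of $F(C_2)$ in the top coordinate. This is where distributivity is actually used, once per inductive step, and the element-free phrasing matters because the statement is for an arbitrary $k$-linear abelian category, not just $\Vect$. As written, the proposal outlines the correct route but stops short of the argument that makes it work.
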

Note that if $\Cscr$ is
a Grothendieck category then the functor $F$ extends to a right exact functor
$F:\cu_n\r \Cscr$ commuting with filtered exact limits. If $F:\cus_n\r \Cscr$ is exact
then so is its extension.

As observed in \cite{PP} the categories $\Cube_n$
carry an interesting monoidal structure.
We first consider the bifunctor ($m,n\ge 0$)
\[
\boxtimes:\Cube_m\times \Cube_n\r \Cube_{m+n}:((X_I)_I,(Y_J)_J)
\mapsto (X_I\otimes Y_J)_{I\cup J+m}
\]
where $J+m\subset \{m+1,\ldots,m+n\}$ is the translation of $J$ by
$m$. This tensor product is clearly associative and biexact.  In
addition we have $ P^{(m)}_I\boxtimes P^{(n)}_J=P^{(m+n)}_{I\cup
J+m} $.  For a number $N\ge 2$ (which will remain constant) one also
defines a shifted tensor product
\[
\otimes:\Cube_m\times \Cube_n\r \Cube_{m+n+N-1}:((X_I)_I,(Y_J)_J)
\mapsto (X_I)_I\boxtimes P^{(N-1)}_{\emptyset}\boxtimes (Y_J)_J
\]
This tensor product is still associative. On the level of projectives
we have
\begin{equation}
\label{ref-3.1-4}
P_I^{(m)}\otimes P_J^{(n)}=P^{(m+n+N-1)}_{I\cup (J+(m+N-1))}
\end{equation}
We extend this formula beyond its original context by declaring it to
be valid for $m\ge -N+1$, $n\ge -N+1$ where we recall that by our conventions if
$m\le 0$ then $P^{(m)}_I$ exists only for $I=\emptyset$.  Concretely
we have for $m=-N+1,\ldots,0$, $n\ge 0$
\[
\otimes:\Cube_m \times \Cube_n\r \Cube_{m+n+N-1}:(P^{(m)}_\emptyset,(Y_J)_J)
\mapsto P^{(m+N-1)}_\emptyset\boxtimes (Y_J)_J
\]
for $m\ge 0$, $n=-N+1,\ldots,0$:
\[
\otimes:\Cube_m\times \Cube_n\r \Cube_{m+n+N-1}:((X_I)_I,P^{(n)}_\emptyset)
\mapsto (X_I)_I\boxtimes P^{(n+N-1)}_\emptyset
\]
and for $m=-N+1,\ldots,0$, $n=-N+1,\ldots,0$:
\[
\otimes:\Cube_m \times \Cube_n\r \Cube_{m+n+N-1}:(P^{(m)}_\emptyset,P^{(n)}_\emptyset)
\mapsto P^{(m+n+N-1)}_\emptyset
\]
If $(\Ascr_i)_i$ is a family of abelian categories then we denote by $\bigoplus_i \Ascr_i$
the abelian category whose objects are formal direct sums $\bigoplus_i A_i$ with
$A_i\in \Ascr_i$. The $\Hom$-sets are given by
\[
\Hom_\Ascr\left(\bigoplus_i A_i,\bigoplus_i B_i\right)=\prod_i \Hom_{\Ascr_i}(A_i,B_i)
\]
By $\bigoplus_i'\Ascr_i$  we mean the full subcategory of $\bigoplus_i \Ascr_i$ where
we only consider objects $(A_i)_i$ with a finite number of non-zero $A_i$.

Put $\Cube_{\bullet}=\bigoplus_{n\ge -N+1} \Cube_n$ and
$\cus_{\bullet}=\bigoplus'_{n\ge -N+1}\cus_n$. With the tensor product defined
above $\Cube_\bullet$ and $\cus_\bullet$ are monoidal categories with a biexact
tensor product.  The unit object is $P^{(-N+1)}_\emptyset$.
\subsection{Tensor products of simples}
For the benefit of the reader we discuss the tensor product of simples
in the  category $\Cube_\bullet$.

\begin{propositions}
\label{ref-3.2.1-5}
Let $I\subset \{1,\ldots,m\}$, $J\subset
  \{1,\ldots,n\}$ with the convention that if $m\le 0$ or $n\le 0$ then
correspondingly $I=\emptyset$ or $J=\emptyset$. Put $J'=J+m+N-1$. The simples occuring in
  the Jordan-Holder series of $S_{I}^{(m)}\otimes S_{J}^{(n)}$ are
  $S^{(m+n+N-1)}_K$ with $K=I\cup J'\cup C$ for $C\subset \{\max(1,m+1),\ldots
  ,\min(m+N-1,m+n+N-1)\}$. The simples occur with multiplicity
one and are ordered according to $|K|$ with
  $S_{I\cup J'}^{(m+n+N-1)}$ appearing on top.
\end{propositions}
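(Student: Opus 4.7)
My approach is to reduce the computation to the composition structure of the single projective $P^{(N-1)}_\emptyset\in \Cube_{N-1}$. The key observation is that, by definition of the shifted tensor product,
\[
S^{(m)}_I\otimes S^{(n)}_J \;=\; S^{(m)}_I\boxtimes P^{(N-1)}_\emptyset \boxtimes S^{(n)}_J
\]
when $m,n\ge 0$, and a smaller middle projective $P^{(m+N-1)}_\emptyset$ or $P^{(n+N-1)}_\emptyset$ (possibly $P^{(m+n+N-1)}_\emptyset$) appears via the edge-case formulas when $m$ or $n$ is negative.

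First I would record two elementary properties of $\boxtimes$. It is biexact by \S\ref{ref-3.1-2}, so tensoring by a simple on either side is an exact functor. Furthermore the vertex-wise definition immediately yields $S^{(a)}_A\boxtimes S^{(b)}_B=S^{(a+b)}_{A\cup (B+a)}$. Consequently the functor $S^{(m)}_I\boxtimes(-)\boxtimes S^{(n)}_J:\Cube_{N-1}\to \Cube_{m+n+N-1}$ is exact and sends the simple $S^{(N-1)}_C$ to $S^{(m+n+N-1)}_{I\cup(C+m)\cup J'}$ where $J'=J+m+N-1$.

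The main substantive ingredient is the Loewy filtration of $P^{(N-1)}_\emptyset$. Since the relations $x_{JK}x_{IJ}=x_{IK}$ identify all paths $\emptyset\to L$, the projective $P^{(N-1)}_\emptyset$ has a one-dimensional vector space at every vertex $L\subset\{1,\dots,N-1\}$, and a short induction on minimal path length identifies
\[
\rad^{i}P^{(N-1)}_\emptyset/\rad^{i+1}P^{(N-1)}_\emptyset \;=\;\bigoplus_{|C|=i}S^{(N-1)}_C.
\]
Thus the composition factors of $P^{(N-1)}_\emptyset$ are the $S^{(N-1)}_C$ for $C\subset\{1,\dots,N-1\}$, each occurring with multiplicity one, ordered by increasing $|C|$ with $S^{(N-1)}_\emptyset$ on top.

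Transporting this filtration through $S^{(m)}_I\boxtimes(-)\boxtimes S^{(n)}_J$ then yields the composition series of $S^{(m)}_I\otimes S^{(n)}_J$ described in the proposition, with $C+m$ ranging over subsets of $\{m+1,\dots,m+N-1\}$, each factor of multiplicity one, and $S^{(m+n+N-1)}_{I\cup J'}$ on top since $|K|=|I|+|J|+|C|$ is minimized at $C=\emptyset$. I expect the principal obstacle to be the bookkeeping of the edge cases $m\le 0$ or $n\le 0$: there the definition replaces the middle $P^{(N-1)}_\emptyset$ by a smaller $P^{(r)}_\emptyset$, but the same Loewy-filtration argument applied to this smaller projective produces exactly the truncated range for $C$ encoded uniformly in $\{\max(1,m+1),\dots,\min(m+N-1,m+n+N-1)\}$.
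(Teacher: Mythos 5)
Your proof is correct and takes a somewhat different route from the paper's. The paper works directly: it unwinds the formula $S^{(m)}_I\otimes S^{(n)}_J = S^{(m)}_I\boxtimes P^{(N-1)}_\emptyset\boxtimes S^{(n)}_J$ vertex-by-vertex, observes that $\bigl(S_{I}^{(m)}\otimes S_{J}^{(n)}\bigr)_K$ equals $k$ precisely when $K=I\cup J'\cup C$ with $C\subset\{m+1,\dots,m+N-1\}$ and is zero otherwise, and reads off the composition factors from the one-dimensional vertex spaces. You instead isolate the structure at the middle tensor factor: you compute the radical filtration of the single projective $P^{(N-1)}_\emptyset$ (whose layers are $\bigoplus_{|C|=i} S^{(N-1)}_C$), note that $S^{(m)}_I\boxtimes(-)\boxtimes S^{(n)}_J$ is exact and sends $S^{(N-1)}_C$ to $S^{(m+n+N-1)}_{I\cup(C+m)\cup J'}$, and transport the filtration. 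The two arguments carry the same information — each vertex space of $P^{(N-1)}_\emptyset$ is exactly one of the $(S^{(m)}_I\otimes S^{(n)}_J)_K$ — but your route has the advantage of actually exhibiting a filtration with the stated subquotients, which makes the ``ordered according to $|K|$ with $S_{I\cup J'}$ on top'' claim transparent; the paper's argument only pins down the multiset of composition factors and leaves the ordering to the reader. Your handling of the boundary cases $m\le 0$ or $n\le 0$ (replacing $P^{(N-1)}_\emptyset$ by $P^{(m+N-1)}_\emptyset$ or $P^{(n+N-1)}_\emptyset$ and repeating the Loewy argument) is also correct and matches the uniform interval $\{\max(1,m+1),\dots,\min(m+N-1,m+n+N-1)\}$ in the statement, which the paper only addresses with ``the other cases are similar.''
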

\begin{proof}
We will consider the case $m,n\ge 1$. The other cases are similar.
We recall the formula for the tensor product. Let $X\in \Cube_m$, $Y\in
\Cube_{n}$. Then
\[
X\otimes Y=X\boxtimes P^{(N-1)}_\emptyset\boxtimes Y
\]
If we apply this with $X=S_{I}^{(m)}$, $Y=S_{J}^{(n)}$ we
find for $K\subset \{1,\ldots, m\}$
\[
\bigl(S_{I}^{(m)}\otimes S_{J}^{(n)}\bigr)_K=
\begin{cases}
k&\text{if $K=I\cup J'\cup C$ with $C\subset \{m+1,\ldots
  m+N-1\}$}\\
0&\text{otherwise}
\end{cases}
\]
So the simples that occur in the Jordan-Holder series of
$S_{I}^{(m)}\otimes S_{J}^{(n)}$ are indeed as indicated.
\end{proof}
\subsection{Koszulity and \mathversion{bold} $\Cube_\bullet$}
It was observed in \cite{PP} that when $N=2$ the category $\Cube_\bullet$ is
intimately connected with the Koszul propery for graded algebras.
Similarly, as we show in this section, when $N>2$, the category $\Cube_\bullet$
is connected with the $N$-Koszul property introduced by Roland Berger in \cite{Berger}.

Let $N\geq 2$ and consider
$A=T(V)/(R)$ with $V$ a finite dimensional $k$-vectorspace and
$R\subseteq V^{\pt N}$. We put
\[
R_i^{(n)}=V^{\otimes i-1}\pt R \pt V^{\otimes n-N-i+1}\subset V^{\otimes n}
\]
and for $I\subset \{1,\ldots,n-N+1\}$
\[
R_I^{(n)}=\bigcap_{i\in I} R_i^{(n)}
\]
We define $F_{A}$ as the right exact
functor commuting with filtered colimits
\[
F_{A}:\Cube_{n-N+1}\r \Vect
\]
which sends $P^{(n-N+1)}_I$ to $R^{(n)}_I$.
As usual we denote its left derived
functor by $LF_{A}$. It is easy to see that $F_{A}$ is a monoidal functor.

Let $\Lscr_n(A)$ be the lattice of subspaces of $V^{\otimes n}$ generated
by $(R_i^{(n)})_i$.
\begin{definitions}
\label{ref-3.3.1-6}
$A$ is \emph{distributive} if for each $n$ the
lattice $\Lscr_n(A)$ is distributive.
\end{definitions}
\begin{definitions} \label{ref-3.3.2-7}
\cite{Berger} $A$ satisfies the \emph{extra condition} if
for all $1\le i\le l\le j\le n$ with $j\le i+N-1$ we have
\[
R^{(n)}_i\cap R^{(n)}_j\subset R^{(n)}_l
\]
\end{definitions}
For use below we also put for $n\ge N$
\[
J_n=R_1^{(n)}\cap\cdots \cap R_{n-N+1}^{(n)}
\]
Thus $J_N=R$. For $n\le N-1$ we put $J_n=V^{\otimes n}$. We denote the composition
\begin{equation}
\label{ref-3.2-8}
A\otimes J_{n}\r A\otimes V\otimes J_{n-1}\r A\otimes J_{n-1}
\end{equation}
by $\delta$. Here the first map is derived from the inclusion $J_{n}\subset
V\otimes J_{n-1}$ and the second map  is derived from the multiplication $A\otimes V\r A$.
Note that $\delta^N=0$. Thus $(A\otimes J_n,\delta)$ is an $N$-complex \cite{Berger1}.
\begin{definitions}\label{ref-3.3.3-9}
 \cite{Berger} $A$ is $N$-Koszul if the following complex
\begin{equation}
\label{ref-3.3-10}
\cdots \r A\otimes J_{2N} \xrightarrow{\delta^{N-1}} A\otimes J_{N+1}\xrightarrow{\delta} A\otimes
R\xrightarrow{\delta^{N-1}} A\otimes V\xrightarrow{\delta} A\r k
\end{equation}
is exact.
\end{definitions}
A famous theorem by Backelin \cite{BF} states that if $N=2$ then $A$ is distributive
if and only if it is Koszul (see below for a proof in the current setting). In general
Berger proved the implications \cite{Berger}
\begin{equation}
\label{ref-3.4-11}
\text{Distributivity}+\text{extra condition}\Rightarrow \text{$N$-Koszul}
\Rightarrow\text{extra condition}
\end{equation}
(the extra condition is vacuous if $N=2$). It is unknown if the converse
of the first implication holds.

The category $\Cube_\bullet$ contains a preimage under $F_A$ of the
degree $m$ part of the Koszul complex of $A$ \eqref{ref-3.3-10}.
Put
\[
W_{m,n}=S^{(m-N+1)}_{\emptyset}\otimes P^{(n-N+1)}
_{\{1,\ldots,n-N+1\}}
\]
Since $A_m=F_{A}(S^{(m-N+1)}_\emptyset)$ and $J_n=F_A(P^{(n-N+1)}
  _{\{1,\ldots,n-N+1\}})$ (with the usual caveat if $n-N+1\le 0$)
one obtains
\[
F_A(W_{m,n})=A_m\otimes J_n
\]
A quick computation shows
\begin{equation}
\label{ref-3.5-12}
(W_{m,n})_I=\begin{cases}
k&\text{if $\{m+1,\ldots, m+n-N+1\}\subset I\subset \{m-N+2,\ldots,m+n-N+1\}$}\\
0&\text{otherwise}
\end{cases}
\end{equation}
and all maps are either the identity (when they are between copies of $k$) or zero.
Define
\[
\delta:W_{m,n}\r W_{m+1,n-1}
\]
such that $\delta_I$ is the identity on $k$ when $(W_{m,n})_I=(W_{m-1,n+1})_I=k$ and zero
otherwise. One checks that this is indeed a well defined map in $\Cube_\bullet$ and
that $F_A(\delta)=\delta_m$ where $\delta_m$ is the degree $m$ part of
the map \eqref{ref-3.2-8}. Furthermore $\delta^N=0$.  It follows that the following complex
in $\Cube_\bullet$
\begin{equation}
\label{ref-3.6-13}
\cdots\xrightarrow{\delta} W_{m-2N,2N}\xrightarrow{\delta^{N-1}}W_{m-N-1,N+1} \xrightarrow{\delta}W_{m-N,N}\xrightarrow{\delta^{N-1}} W_{m-1,1}\xrightarrow{\delta} W_{m,0}\r
S_\emptyset^{(-N+1)}\cdot \delta_{m,0}
\end{equation}
maps to the degree $m$ part of the Koszul complex.

Let us say that $I\subset \{1,\ldots,m\}$
has a hole of size $u$ (with $u\ge 1$) if there exist $i<j\in I$ such
that $j-i=u+1$ and for all $i<l<j$ we have $l\not\in I$.
\begin{propositions}
\label{ref-3.3.4-14}
The homology of \eqref{ref-3.6-13} is an extension of simples $S^{(m-N+1)}_K$
such that $K$ contains holes of size $\le N-2$.
\end{propositions}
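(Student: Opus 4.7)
The plan is to compute the homology of \eqref{ref-3.6-13} vertex-by-vertex in $\Cube_{m-N+1}$. Since $\Cube_{m-N+1}$ is abelian with kernels and cokernels formed componentwise at each $K\subset\{1,\ldots,m-N+1\}$, and since for a finite dimensional object $X$ the multiplicity of the simple $S_K^{(m-N+1)}$ in a composition series equals $\dim X_K$, the conclusion reduces to showing that the pointwise homology at $K$ vanishes whenever $K$ contains a hole of size $\ge N-1$.

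After fixing $K$, I would first unwind \eqref{ref-3.5-12}. Writing $t(K)$ for the length of the maximal consecutive run in $K$ ending at $m-N+1$, and setting $b(K)=m-N+2-\min K$ (with $b(\emptyset)=0$), one reads off directly that the set $A(K)$ of indices $k$ with $(W_{m-k,k})_K\neq 0$ is the interval $[b(K),t(K)+N-1]$. Let $P(K)=A(K)\cap\{aN,aN+1:a\ge 0\}$ be the Koszul positions alive at $K$. The structural observation that underlies everything is that since $A(K)$ is an interval, every intermediate index appearing in a composition $\delta^{N-1}$ between two elements of $P(K)$ also lies in $A(K)$. Consequently each $\delta$ and each $\delta^{N-1}$ in \eqref{ref-3.6-13}, evaluated at $K$, is either the identity (when source and target are both nonzero) or zero; the pointwise complex at $K$ is then a chain of copies of $k$ connected by identity maps supported on $P(K)$. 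Such a complex is acyclic when $|P(K)|\neq 1$ and contributes a single copy of $k$ otherwise, at the unique Koszul position in $P(K)$.

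The remaining and decisive step is to show that $|P(K)|=1$ forces all holes of $K$ to have size $\le N-2$. Since consecutive Koszul positions are separated by at most $N-1$ (the gap between $aN+1$ and $(a+1)N$), an interval meeting $\{aN,aN+1:a\ge 0\}$ in exactly one point has length at most $N-1$; this translates into $b(K)-t(K)\le N-1$. Now $K$ sits inside the window $\{\min K,\ldots,m-N+1\}$ of length $b(K)$, must contain $\min K$ together with the top-consecutive block of length $t(K)$, and must exclude the position immediately below that block by maximality of $t(K)$. The only positions in the window that are free to belong to $K$ or not are the $b(K)-t(K)-2$ slots strictly between $\min K$ and this excluded position, so the longest possible run of elements missing from $K$---necessarily located between $\min K$ and the top-consecutive block---has length at most $b(K)-t(K)-1\le N-2$.

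The main obstacle is this last bookkeeping step: one must verify that every $K$ compatible with $|P(K)|=1$ really is of the shape just described and that no other location in $K$ can host a longer run of missing elements. The preceding reductions---to pointwise homology, identification of $A(K)$, and the identity-map structure of the restricted complex---are essentially direct unwindings of the definitions.
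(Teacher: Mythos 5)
Your proof is essentially correct, but it takes a genuinely different route from the paper's. The paper computes homology degree-by-degree: it slices the complex into three-term pieces $W_{m',n'}\xrightarrow{\delta^a} W_{m'+a,n'-a}\xrightarrow{\delta^{N-a}}W_{m'+N,n'-N}$, derives explicit vertex-set conditions \eqref{ref-3.8-16} for the middle homology to be nonzero, and extracts a small hole from those conditions via the triple $\{i,j,m'+a+1\}$, then treats the two boundary degrees separately. You instead fix the vertex $K$ of $Q_{m-N+1}$ and sweep over all degrees at once, exploiting that $A(K)$ is an interval so that the pointwise complex is a chain of identity maps; this gives a uniform treatment that absorbs the boundary cases automatically, which is a real simplification. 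The reductions to pointwise homology, the identification $A(K)=[b(K),t(K)+N-1]$, and the counting of Koszul positions are all sound.

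A few small repairs are worth flagging. First, the step ``an interval meeting $\{aN,aN+1\}$ in exactly one point has length at most $N-1$; this translates into $b(K)-t(K)\le N-1$'' is a non-sequitur: ``length $\le N-1$'' translates into $b(K)-t(K)\ge 1$, not $\le N-1$. The inequality you actually need, $b(K)-t(K)\le N-1$, comes for free from $A(K)\ne\emptyset$ (equivalently $|P(K)|\ge 1$); and the complementary bound $b(K)\ge t(K)$ (from $\min K\le m-N+2-t(K)$) forces $|A(K)|\le N$, hence $|P(K)|\le 2$ automatically, a fact you use implicitly when asserting the pointwise complex is acyclic unless $|P(K)|=1$. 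Second, the proposition asks for $K$ to \emph{contain} a hole of size $\le N-2$, not merely that all its holes are $\le N-2$; so you also need to rule out $K$ being an interval when $|P(K)|=1$. This does follow from your setup: if $K$ is a nonempty interval ending at $m-N+1$ then $b(K)=t(K)$ and $|A(K)|=N$ gives $|P(K)|=2$; if $K$ is an interval not ending at $m-N+1$ then $t(K)=0$, $b(K)\ge 2$ and $A(K)\subset[2,N-1]$ contains no Koszul position; and $K=\emptyset$ has $A(K)=[0,N-1]$, so $|P(K)|=2$. With these observations inserted, the argument is complete.
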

\begin{proof}
to prove this we will more generally consider
the homology of the complexes
\begin{equation}
\label{ref-3.7-15}
W_{m,n}\xrightarrow{\delta^a} W_{m+a,n-a}\xrightarrow{\delta^{N-a}} W_{m+N,n-N}
\end{equation}
for any $0<a<N$. \eqref{ref-3.6-13} is built up from such complexes for $a=1,N-1$.

Let $H$ be the homology of \eqref{ref-3.7-15}. From
\eqref{ref-3.5-12} we deduce that $H_I$ is non-zero iff the following conditions
hold
\begin{equation}
\label{ref-3.8-16}
\begin{gathered}
I\not\subset \{m+2,\ldots,m+n-N+1\}\\
\{m+1,\ldots,m+n-N+1\}\not\subset I\\
\{m+a+1,\ldots, m+n-N+1\}\subset I\subset \{m+a-N+2,\ldots,m+n-N+1\}
\end{gathered}
\end{equation}
Let us assume that all sets occurring in these conditions are non-empty.
This happens when
\begin{equation}
\label{ref-3.9-17}
N\le n-a
\end{equation}
If $I$ satisfies \eqref{ref-3.8-16}\eqref{ref-3.9-17} then we have the following:
\begin{gather*}
\exist i:m+a-N+2\le  i\le m+1:i\in I\\
\exist j:m+a \ge  j\ge m+1:j\not\in I
\end{gather*}
Consider the triple $\{i,j,m+a+1\}$. Here $i,m+a+1\in I$ and $j\not\in I$.
We clearly have $i<j<m+a+1$ and one computes
$m+a+1-i\le N-1$. Hence $I$ contains a hole of size $\le N-2$.

Looking at the condition \eqref{ref-3.9-17} we see that the homology of \eqref{ref-3.6-13}
is an extension of simples $S^{(l)}_K$
such that $K$ contains holes of size $\le N-2$, except perhaps at $W_{m-1,1}$ and $W_{m,0}$.

To compute the homology at $W_{m-1,1}$ we have to make the substitutions
$m\rightarrow m-N$, $n\rightarrow N$, $a\rightarrow N-1$ in \eqref{ref-3.7-15}.

Thus the condition \eqref{ref-3.8-16} becomes
\begin{equation}
\label{ref-3.10-18}
\begin{gathered}
I\not\subset \{m-N+2,\ldots,m-N+1\}=\emptyset\\
\{m-N+1,\ldots,m-N+1\}=\{m-N+1\}\not\subset I\\
\{m,\ldots, m-N+1\}=\emptyset\subset I\subset \{m-N+1,\ldots,m-N+1\}=\{m-N+1\}
\end{gathered}
\end{equation}
The only $I$'s satisfying the last condition are $\emptyset$,
$\{m-N+1\}$. $I=\emptyset$ does not satisfy the first condition and
$I=\{m-N+1\}$ does not satisfy the second condition. Hence no $I$
satisfies all three. Thus the homology at $W_{m-1,1}$ is zero. 

We leave to the reader the easy verification that the homology of
\eqref{ref-3.6-13} at $W_{m,0}$ is also zero.
\end{proof}
\begin{remarks} A perhaps better way to understand the complex \eqref{ref-3.6-13}
is as follows. Write $\Vscr=P^{(-N+2)}_\emptyset\in \Cube_{-N+2}$ and
$\Rscr=P^{(1)}_{\{1\}}\in \Cube_1$. Then there is a unique non-zero map (up to
scalar) in $\Cube_1$
\[
\Rscr\r \Vscr^{\otimes N}
\]
Let $\Ascr=T\Vscr/(\Rscr)$ be the corresponding graded algebra object in
the monoidal abelian category $\Cube_\bullet$. Then $F_A(\Ascr)=A$.
The Koszul complex \eqref{ref-3.3-10}
can be defined in an arbitrary abelian monoidal category and in particular it can
be defined for $\Ascr$. Then \eqref{ref-3.6-13} is just the
degree $m$ part of this generalized Koszul complex.

As we have seen the Koszul complex for $\Ascr$ is not exact. Define
\[
\Cube^{\text{ec}}_\bullet=\Cube_\bullet/\Sscr^{\text{ec}}
\]
where $\Sscr^{\text{ec}}$ is the localizing subcategory generated by the
simples $S_{K}^{(l)}$ such that $K$ has a hole of size $\le N-2$. It
is easy to see that $\Cube^{\text{ec}}_\bullet$ inherits the monoidal structure of
$\Cube^{\text{ec}}_\bullet$.

Then the image of the Koszul complex of $\Ascr$ in $\Cube^{\text{ec}}_\bullet$ is exact.
Thus $\Ascr$ is an $N$-Koszul algebra in the monoidal abelian category $\Cube^{\text{ec}}_\bullet$.
\end{remarks}

The properties introduced in Definitions
\ref{ref-3.3.1-6}-\ref{ref-3.3.3-9} translate nicely into properties
of the functor $F_{A}$. We say that an object $M\in \Cube_m$ is
acyclic if $L_iF_AM=0$ for $i>0$. Note the following
\begin{lemmas}
\label{ref-3.3.6-19}
If $M\in \Cube_a$ and $N\in \Cube_b$ are acyclic then so is $M\otimes N$.
\end{lemmas}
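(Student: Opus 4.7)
The plan is a double-complex argument exploiting three facts already in place: the tensor product on $\Cube_\bullet$ is biexact, the functor $F_A$ is monoidal and right exact, and tensor products of the projective generators $P^{(m)}_I$ are themselves projective by the formula \eqref{ref-3.1-4}. Since $k$ is a field, tensor products of complexes of vector spaces will be exact, and that is what makes acyclicity multiplicative.

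Concretely, choose projective resolutions $P_\bullet\r M$ in $\Cube_a$ and $Q_\bullet\r N$ in $\Cube_b$; these exist because the $P^{(m)}_I$ form a set of projective generators of $\Cube_m$. Form the bicomplex $P_\bullet\otimes Q_\bullet$ in $\Cube_{a+b+N-1}$ and let $T_\bullet=\Tot(P_\bullet\otimes Q_\bullet)$. The first thing I would verify is that $T_\bullet\r M\otimes N$ is a projective resolution. Each component $P_i\otimes Q_j$ is projective by \eqref{ref-3.1-4}, and boundedness-above of $P_\bullet,Q_\bullet$ makes the direct sums defining $T_n$ finite, so $T_\bullet$ is a bounded-above complex of projectives. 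Exactness of the augmented total complex follows from biexactness of $\otimes$: the rows (resp.\ columns) of the augmented bicomplex are exact, and a standard double-complex (or spectral sequence) argument collapses this to a quasi-isomorphism $T_\bullet\r M\otimes N$.

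Next I would apply $F_A$. Monoidality of $F_A$ gives $F_A(P_i\otimes Q_j)=F_A(P_i)\otimes F_A(Q_j)$, so $F_A(T_\bullet)=\Tot(F_A(P_\bullet)\otimes F_A(Q_\bullet))$. By hypothesis $M$ and $N$ are acyclic, meaning the complexes $F_A(P_\bullet)\r F_A(M)$ and $F_A(Q_\bullet)\r F_A(N)$ are quasi-isomorphisms of complexes of $k$-vector spaces. Since tensor product over a field is exact, the Künneth formula (with all $\Tor^k$-terms vanishing) implies that $\Tot(F_A(P_\bullet)\otimes F_A(Q_\bullet))$ is quasi-isomorphic to $F_A(M)\otimes F_A(N)$, concentrated in degree $0$. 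Using monoidality once more identifies this with $F_A(M\otimes N)$. Therefore
\[
L_iF_A(M\otimes N)=H_i(F_A(T_\bullet))=0 \qquad\text{for }i>0,
\]
so $M\otimes N$ is acyclic.

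The only nontrivial step is the verification that $T_\bullet\r M\otimes N$ is a genuine resolution; the subtleties there are purely formal (biexactness plus boundedness), and the rest of the argument is forced by monoidality of $F_A$ together with the exactness of $\otimes_k$.
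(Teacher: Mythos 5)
Your argument is correct and follows essentially the same route as the paper: take projective resolutions $P_\bullet$ and $Q_\bullet$, use biexactness of $\otimes$ and formula \eqref{ref-3.1-4} to see that the total complex of $P_\bullet\otimes Q_\bullet$ is a projective resolution of $M\otimes N$, apply monoidality of $F_A$ to identify $F_A(T_\bullet)$ with $\Tot(F_A(P_\bullet)\otimes F_A(Q_\bullet))$, and then use exactness of $\otimes_k$ (K\"unneth over a field) to conclude that this is a resolution of $F_A(M)\otimes F_A(N)=F_A(M\otimes N)$. The only cosmetic difference is that the paper chooses \emph{minimal} projective resolutions, which is not needed for the argument.
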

\begin{proof} Let $P_\bullet\r M\r 0$ and $Q_\bullet \r N\r 0$ be
  minimal projective resolutions of $M,N$. The exactness of the tensor
  product implies that $P_\bullet\otimes Q_\bullet$ is a minimal
  projective resolution of $M\otimes N$. Hence $L_\bullet F_A(M\otimes
  N)$ is computed by $F_A(P_\bullet\otimes Q_\bullet)=
  F_A(P_\bullet)\otimes F_A(Q_\bullet)$. Since $M$ and $N$ are acyclic,
$F_A(P_\bullet)$, $F_A(Q_\bullet)$ are resolutions of $F_AM$, resp.\ $F_AN$. Hence
$F_A(P_\bullet)\otimes F_A(Q_\bullet)$ is a resolution of $F_AM\otimes F_AN$.
\end{proof}

The following is the main result of this section.

\begin{propositions}
\label{ref-3.3.7-20}
\begin{enumerate}
\item
$A$ is distributive if and only if the functor $F_{A}$ is exact for all $n$,
i.e.\ if and only if all $S^{(n-N+1)}_I$ are acyclic.
\item $A$ satisfies the extra condition if and only if for all $n$ one
  has $F_{A}(S^{(n-N+1)}_I)=0$ for those $I$ which have a hole of size $\le N-2$.
\item Assume that $A$ satisfies the extra condition. Then for all $n$ and all $I$ that
contain a hole of size $\le N-2$ one has that $S^{(n-N+1)}_I$ is acyclic.
\item Assume that $A$ satisfies the extra condition. Then
$A$ is $N$-Koszul if and only if for all $n$ one has that $S_{\emptyset}^{(n-N+1)}$ is
acyclic.
\end{enumerate}
\end{propositions}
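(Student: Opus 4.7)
The plan is to prove the four parts in sequence, with (1) and (2) being essentially formal and (3)--(4) carrying the substance.

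For (1), Theorem \ref{ref-3.1.1-3} identifies distributivity of $\Lscr_n(A)$ with exactness of the right exact functor $\Cube_{n-N+1}\to\Vect$ sending $P^{(n-N+1)}_\emptyset\mapsto V^{\otimes n}$ and $P^{(n-N+1)}_{\{i\}}\mapsto R^{(n)}_i$, which is the restriction of $F_A$. Since a right exact functor between Grothendieck categories is exact iff all $L_iF_A$ vanish, and vanishing of derived functors is detected on simples (by d\'evissage through Jordan--H\"older and filtered colimits), the three conditions are equivalent.

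For (2), right exactness applied to the presentation $\bigoplus_{k\notin I}P^{(n-N+1)}_{I\cup\{k\}}\to P^{(n-N+1)}_I\to S^{(n-N+1)}_I\to0$ gives $F_A(S^{(n-N+1)}_I)=R^{(n)}_I/\sum_{k\notin I}R^{(n)}_{I\cup\{k\}}$. If ec holds and $I$ has a hole of size $\le N-2$ with $i,j\in I$, $l$ strictly between, then $R^{(n)}_I\subset R^{(n)}_i\cap R^{(n)}_j\subset R^{(n)}_l$ forces $R^{(n)}_{I\cup\{l\}}=R^{(n)}_I$, killing the quotient. For the converse, fix $i<l<j$ with $j-i\le N-1$ and iteratively apply the identity $R^{(n)}_K=\sum_{k'\notin K}R^{(n)}_{K\cup\{k'\}}$, which holds whenever $K$ has a hole of size $\le N-2$. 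Since the finite lattice of subsets guarantees termination, and because the gap between $i$ and $j$ (of size $\le N-2$) can only vanish by being completely filled, every terminal $K$ contains the whole interval $[i,j]$ and in particular $l$; expanding $R^{(n)}_{\{i,j\}}$ through this iteration therefore exhibits it as a sum of subspaces each contained in $R^{(n)}_l$.

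For (3), the key observation is that when $I$ has a hole with $i,j\in I$, $i<l<j$, $j-i\le N-1$, $l\notin I$, then every $J\supset I$ appearing in the minimal projective resolution of $S^{(n-N+1)}_I$ still contains $i,j$, so $R^{(n)}_J\subset R^{(n)}_i\cap R^{(n)}_j\subset R^{(n)}_l$, whence $R^{(n)}_{J\cup\{l\}}=R^{(n)}_J$ by ec. The minimal projective resolution in the Boolean hypercube category $\Cube_{n-N+1}$ takes Koszul shape $\cdots\to\bigoplus_{J\supset I,\,|J|=|I|+k}P^{(n-N+1)}_J\to\cdots\to P^{(n-N+1)}_I$ with signed-inclusion differentials, and applying $F_A$ gives a complex of $R^{(n)}_J$'s. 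Splitting the complex according to whether $l\in J$ exhibits it as the mapping cone of the ``add $l$'' map, which under the identities $R^{(n)}_{J\cup\{l\}}=R^{(n)}_J$ is an isomorphism of complexes; the cone is therefore acyclic, proving $L_iF_A(S^{(n-N+1)}_I)=0$ for all $i\ge 0$.

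For (4), the vehicle is the complex \eqref{ref-3.6-13}. Assuming ec and acyclicity of each $S^{(n-N+1)}_\emptyset$, Lemma \ref{ref-3.3.6-19} applied to the factorization $W_{m,n}=S^{(m-N+1)}_\emptyset\otimes P^{(n-N+1)}_{\{1,\ldots,n-N+1\}}$ makes every term of \eqref{ref-3.6-13} $F_A$-acyclic, so the cohomology of $F_A$ of \eqref{ref-3.6-13} agrees with $F_A$ of its cohomology; by Proposition \ref{ref-3.3.4-14} the latter is built from simples $S^{(m-N+1)}_K$ with holes, annihilated by $F_A$ via (2) and acyclic via (3). Hence $F_A$ of \eqref{ref-3.6-13} is exact, i.e.\ \eqref{ref-3.3-10} is exact and $A$ is $N$-Koszul. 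Conversely, assuming $N$-Koszulity I would induct on $m$: using acyclicity of $S^{(m'-N+1)}_\emptyset$ for $m'<m$, all terms $W_{m',n}$ with $m'<m$ are acyclic, and combining this with (3) and the exactness of \eqref{ref-3.3-10} one reads off acyclicity of $S^{(m-N+1)}_\emptyset$ from the rightmost nontrivial term of \eqref{ref-3.6-13}. The principal obstacle throughout is part (3): upgrading the $F_A$-vanishing on simples with holes provided by (2) to vanishing of every higher $L_iF_A$, since it is precisely this upgrade that separates distributivity$+$ec from the mere $N$-Koszul conclusion.
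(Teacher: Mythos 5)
Your proposal follows essentially the same route as the paper for all four parts: (1) via Theorem~\ref{ref-3.1.1-3} and dévissage, (2) via the presentation of $S_I^{(n-N+1)}$ and iterating the sum identity until all small holes are filled, (3) via pairing the summands $P_K, P_{K\cup\{l\}}$ of the Koszul-type minimal resolution and observing that the extra condition forces the ``add $l$'' map to be an isomorphism after applying $F_A$, and (4) via the complex \eqref{ref-3.6-13} combined with Lemma~\ref{ref-3.3.6-19}, Proposition~\ref{ref-3.3.4-14}, and the hyper-derived-functor argument for complexes with acyclic terms and acyclic, $F_A$-killed homology. The only cosmetic difference is the ordering of the two implications in (4); the mathematical content agrees with the paper's proof.
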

\begin{proof}
\begin{enumerate}
\item This is just a translation of Theorem \ref{ref-3.1.1-3}.
\item The simple $S_I^{(n-N+1)}$ has a presentation
\[
\bigoplus_{l\not\in I} P_{I\cup \{l\}}^{(n-N+1)}\r P_I^{(n-N+1)}\r S_I^{(n-N+1)}\r 0
\]
and thus
\[
F_{A}(S_I^{(n-N+1)})=R_I^{(n)}\left/\sum_{l\not\in I} R^{(n)}_{I\cup \{l\}}\right.
\]
Assume that the extra condition holds and that $I$ has a hole
of size $\le N-2$, delimited by $i<j$. Pick $i<l<j$. Then $R^{(n)}_I=R^{(n)}_{I\cup \{l\}}$.
Hence $F_{A}(S^{(n-N+1)}_I)=0$.

Conversely assume $F_{A}(S^{(n-N+1)})=0$ for all $I$ containing a hole of size $\le N-2$.
Then for such an $I$ we have
\[
R_I^{(n)}=\sum_{l\not\in I} R^{(n)}_{I\cup \{l\}}
\]
Iterating this identity we get
\[
R_I^{(n)}=\sum_{i=1}^p R_{J_i}^{(n)}
\]
where $(J_i)_i$ is the set of all subsets of $\{1,\ldots,n\}$ which contain $I$
and which have no holes of size $\le N-2$.

Assume now $I=\{i,j\}$ with $i<j\le i+N-1$ and take $i<l<j$. Each of the $J_i$
must contain $l$. Hence $R^{(n)}_{J_i}\subset R^{(n)}_l$ and thus
$R^{(n)}_i\cap R^{(n)}_j=R^{(n)}_I\subset R^{(n)}_l$. Hence the extra condition holds.
\item Assume that $I$ contains a hole of length $\le N-2$, delimited
  by $i<j$. Pick $i<l<j$.  The minimal resolution $K_\bullet$ of
  $S^{(n+N-1)}_I$ has the form $\bigoplus_{K\supset I}
  P_K^{(n-N+1)}$. The $P_K^{(n-N+1)}$ come in pairs $P^{(n-N+1)}_{K_1}, P^{(n-N+1)}_{K_2}$ with
  $l\not \in K_1$ and $K_2=K_1\cup \{l\}$. Since the extra condition
  implies $F_A(P^{(n-N+1)}_{K_1})=F_A(P^{(n-N+1)}_{K_2})$ we easily deduce that $F_A(K_\bullet)$
  is acyclic.
\item
Assume that $A$ is $N$-Koszul (and hence satisfies the extra
condition by \eqref{ref-3.4-11}). We will show by induction that
$S_{\emptyset}^{(m-N+1)}$ is acyclic. For $m=0$ this is trivial. So
assume $m>0$. Then \eqref{ref-3.6-13} yields a (finite complex)
\begin{equation}
\label{ref-3.11-21}
\cdots \r W_{m-N,N}\r W_{m-1,1} \r S_\emptyset^{(m-N+1)} \r 0
\end{equation}
By induction and Lemma \ref{ref-3.3.6-19} $W_{m-a,a}$ is acyclic for $a>0$.
By (2,3) the homology of \eqref{ref-3.11-21} is acyclic as well and killed by $F_A$. Hence $L_i F_A S_\emptyset^{(m-N+1)}$
for $i>0$ is computed by the homology of.
\[
\cdots \r F_AW_{m-N,N}\r F_AW_{m-1,1} \r  S_\emptyset^{(m-N+1)}\r 0
\]
This latter complex is precisely the degree $m$ part of the Koszul
complex \eqref{ref-3.3-10} which is acyclic by our hypothesis that
$A$ is $N$-Koszul.

Conversely assume that $S_\emptyset^{(m-N+1)}$ is acyclic for all
$m$. Then the terms of \eqref{ref-3.6-13} are acyclic. Since the
homology of \eqref{ref-3.6-13} is acyclic and killed by~$F_A$ (by (2,3))
it follows that the complex \eqref{ref-3.6-13} becomes exact after
applying~$F_A$. The resulting exact sequence is precisely the degree $m$ part
of \eqref{ref-3.3-10}. Hence combining all $m$ we find that $A$ is $N$-Koszul. \qed
\end{enumerate}
\def\qed{}
\end{proof}
\begin{theorem}\cite{BF} \label{ref-3.1-22} If $N=2$ then $A$ is Koszul if and only if
  it is distributive.
\end{theorem}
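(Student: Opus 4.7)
The plan is to reduce the theorem to Proposition~\ref{ref-3.3.7-20}. Since $N=2$, the extra condition of Definition~\ref{ref-3.3.2-7} is vacuous, so parts~(1) and~(4) of that proposition specialize to: $A$ is distributive iff every simple $S_I^{(n-1)}$ is $F_A$-acyclic, and $A$ is Koszul iff every $S_\emptyset^{(n-1)}$ is $F_A$-acyclic. The implication ``distributive $\Rightarrow$ Koszul'' is then immediate, since the acyclicity required by Koszulity is a special case of the acyclicity delivered by distributivity.

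For the converse, assuming $A$ is Koszul, I would induct on $|I|$ to prove that every $S_I^{(n-1)}$ is acyclic. The base case $|I|=0$ is exactly the Koszul hypothesis. For the inductive step, given $I = \{i_1 < i_2 < \cdots < i_k\} \subseteq \{1,\ldots,n-1\}$, the idea is to realize $S_I^{(n-1)}$ as a composition factor of a tensor product of $S_\emptyset$'s built from the ``gaps'' of $I$. Set $a_0 = i_1 - 1$, $a_j = i_{j+1}-i_j-1$ for $1\le j\le k-1$, and $a_k = n-1-i_k$ (all nonnegative), and consider
\[
T \;=\; S_\emptyset^{(a_0)} \otimes S_\emptyset^{(a_1)} \otimes \cdots \otimes S_\emptyset^{(a_k)} \;\in\; \Cube_{n-1}.
\]
Each factor is acyclic by the Koszul hypothesis, so repeated application of Lemma~\ref{ref-3.3.6-19} shows that $T$ is acyclic.

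The combinatorial heart of the argument is an iterated application of Proposition~\ref{ref-3.2.1-5}: because $N-1=1$, each tensoring step inserts exactly one new index position, namely $a_0 + a_1 + \cdots + a_j + (j+1) = i_{j+1}$, which the subset label of each new composition factor may or may not contain. A short induction on $k$ then shows that the Jordan-H\"older factors of $T$ are precisely $\{\,S_K^{(n-1)} : K \subseteq I\,\}$, each with multiplicity one. By the outer induction hypothesis, every $S_K^{(n-1)}$ with $K \subsetneq I$ is already known to be acyclic. Hence, in any composition series of $T$, all composition factors other than $S_I^{(n-1)}$ are acyclic while $T$ itself is acyclic; a standard long-exact-sequence chase through the filtration then forces $S_I^{(n-1)}$ to be acyclic as well, completing the induction.

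The only real obstacle is the combinatorial bookkeeping that identifies the composition factors of $T$ with the power set of $I$, and this is routine once one unwinds Proposition~\ref{ref-3.2.1-5} in the case $N=2$; everything else is a clean reduction to the already-established content of Proposition~\ref{ref-3.3.7-20} and Lemma~\ref{ref-3.3.6-19}.
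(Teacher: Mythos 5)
Your forward direction is actually a touch cleaner than the paper's: you route it through Proposition~\ref{ref-3.3.7-20}(1) and~(4) directly, whereas the paper re-derives exactness of the Koszul complex by applying the exact functor $F_A$ to the complex \eqref{ref-3.6-13} and invoking Proposition~\ref{ref-3.3.4-14}. Both are fine; you are right that the extra condition is vacuous for $N=2$.

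For the converse, your strategy is close in spirit to the paper's but is organized differently, and as written it has a gap in the final step. The paper handles $S_I^{(n-1)}$ with a \emph{single} application of Proposition~\ref{ref-3.2.1-5}, namely the short exact sequence \eqref{ref-3.12-23}
\[
0\to S_I^{(n-1)}\to S_{I_1}^{(l-1)}\otimes S_{I_2}^{(n-l-1)}\to S_{I\setminus\{l\}}^{(n-1)}\to 0,
\]
in which $S_I^{(n-1)}$ appears as the \emph{subobject}; then ``$B,C$ acyclic $\Rightarrow A$ acyclic'' for $0\to A\to B\to C\to 0$ (the direction that does hold for a right exact functor) immediately finishes the induction. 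You instead unroll this recursion completely, form $T=S_\emptyset^{(a_0)}\otimes\cdots\otimes S_\emptyset^{(a_k)}$, and argue via a filtration of $T$ with factors $\{S_K:K\subseteq I\}$. Your combinatorial identification of the Jordan--H\"older factors is correct, and $T$ is acyclic by Lemma~\ref{ref-3.3.6-19} together with the Koszul hypothesis.

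The gap is in the sentence ``a standard long-exact-sequence chase through the filtration then forces $S_I^{(n-1)}$ to be acyclic.'' For a right exact functor this is \emph{not} automatic: from $0\to A\to B\to C\to 0$ with $A,B$ acyclic one cannot conclude $C$ acyclic, since the long exact sequence ends $0\to L_1F_AC\to F_AA\to F_AB$ and $F_AA\to F_AB$ need not be injective. So if $S_I$ sits in the middle of your filtration, the chase stalls at exactly that step. What saves you is that $S_I$ is forced to lie in the \emph{socle} of $T$: since every composition factor of $T$ is some $S_K$ with $K\subseteq I$, we have $T_J=0$ for all $J\supsetneq I$, so every outgoing arrow from the vertex $I$ is zero and any nonzero vector in $T_I$ generates a subrepresentation isomorphic to $S_I$. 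With $S_I\hookrightarrow T$, the quotient $T/S_I$ has only composition factors $S_K$ with $K\subsetneq I$, hence is acyclic by the inductive hypothesis, and then $0\to S_I\to T\to T/S_I\to 0$ gives acyclicity of $S_I$ via the valid implication. You should make this socle observation explicit; the paper's exact sequence \eqref{ref-3.12-23} sidesteps the issue entirely by placing $S_I$ as a subobject from the outset, which is why its version of the induction is the more economical one.
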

\begin{proof} Assume $N=2$. If $A$ is distributive then by applying
$F_A$ to the complex \eqref{ref-3.6-13} we obtain the degree $m$
part of the Koszul complex of $A$.  Using (1,2,3) of Proposition \ref{ref-3.3.7-20} we
see this complex is exact.

We now prove the ``difficult''
  direction. Assume that $A$ is Koszul. By Proposition \ref{ref-3.3.7-20} we have
to prove that all $S^{(n-1)}_I$ are acyclic. By \ref{ref-3.3.7-20}(4)
  we know that all $S^{(n-1)}_{\emptyset}$ are acyclic.

We will
use induction on $|I|$ and $n$ to prove that $S^{(n-1)}_I$ is
acyclic for all $I$. If $I=\emptyset$ then there is nothing to prove.
So assume $l\in I$. Put $I_1=I\cap \{1,\ldots,l-1\}$ and
$I_2=(I\cap \{l+1,\ldots,n-1\})-l$ ($I_1$, $I_2$ can be empty). Then
by Proposition \ref{ref-3.2.1-5}  we have an exact sequence (see also \cite{PP})
\begin{equation}
\label{ref-3.12-23}
0\r S_{I}^{(n-1)} \r S_{I_1}^{(l-1)}\otimes S_{I_2}^{(n-l-1)}\r S_{I\setminus \{l\}}^{(n-1)}
\r 0
\end{equation}
By induction $S_{I_1}^{(l-1)}$, $S_{I_2}^{(n-l-1)}$, $S_{I\setminus \{l\}}^{(n-1)}$
are all acyclic. Then \eqref{ref-3.12-23} implies that $S_I^{(n-1)}$ is acyclic
as well.
\end{proof}
\begin{remarks} The first part of the previous proof works for arbitrary $N$ and yields
the first implication in \eqref{ref-3.4-11}. In fact for a distributive algebra
satisfying the extra condition, the Koszul complex \eqref{ref-3.3-10} is
only one of many long exact sequences one may construct.

To be more precise we have already noted that $(A\otimes J_n,\delta)_n$ is
an $N$-complex (see \cite{Berger1}). An $N$-complex
\[
\cdots \r Y^i\xrightarrow{\delta} Y^{i+1}\xrightarrow{\delta} Y^{i+2}\xrightarrow{\delta}
\]
can be contracted into
a number of genuine complexes
\[
\cdots \r Y^i\xrightarrow{\delta^a} Y^{i+a}\xrightarrow{\delta^{N-a}} Y^{i+N}
\xrightarrow{\delta^a} Y^{i+N+a}\r \cdots
\]
By analyzing the proof of Proposition \ref{ref-3.3.4-14} (in
particular the properties of the complexes \eqref{ref-3.7-15} when
\eqref{ref-3.9-17} holds) we find that the contracted complexes of
$(A\otimes J_n,\delta)$ are  exact, except in their initial degrees.
For example if $N=3$ then the complex
\[
\cdots \r A\otimes J_{N+2}\xrightarrow{\delta^2} A\otimes R \xrightarrow{\delta} A\otimes V\otimes V\xrightarrow{\delta^2} A
\]
becomes exact starting at the term $A\otimes R$.

On the contrary, the contracted complexes are almost never exact in their initial
degrees. This has been observed in \cite{Berger1}.
\end{remarks}

\section{The bialgebra  $\fnd(A)$}
\label{ref-4-24} Let $A$ be a $\ZZ$-graded algebra. It is easy to
see that there exists an algebra $B$ which is universal for the
 property that there exists an algebra morphism $\delta:A\r B\otimes A$ such
that $\delta(A_n)\subset B\otimes A_n$ for all $n$. In other words
for any algebra morphism $\partial:A\r C\otimes A$ which preserves
the $A$-grading there is a unique algebra morphism $\gamma:B\r C$
such that $\partial=(\gamma\otimes\Id)\circ \delta$. From the
universality property one immediately obtains a bialgebra structure
on $B$. Following Manin \cite{Ma} we use the notation $\fnd(A)$ for
$B$.

We consider a special case of this construction.  Let $N\geq 2$ and consider
$A=T(V)/(R)$ with $V$ a finite dimensional $k$-vectorspace and
$R\subseteq V^{\pt N}$. Then it is not hard to see that
\[
\en(A)=T(V^*\pt V)/(\pi_N(R^\perp \pt R)),
\]
where $\pi_N$ is the shuffle map
\begin{multline*}
\pi_N: (V^*\pt V)^{\pt N} \r V^{*\pt N}\pt V^{\pt N} :
\\ f_1\pt \cdots \pt f_N \pt x_1 \pt \cdots \pt x_N \mapsto f_1\pt x_1 \pt \cdots \pt f_N \pt x_N
\end{multline*}
The co-action of $\fnd(A)$ on $A$, as well as the bialgebra structure on
$\fnd(A)$ can be described concretely in terms of generators. Fix a $k$-basis
$(x_i)_i$ for $V$ and let $(x_i^*)_i$ be its dual basis in $V^*$. Put
$z^{j}_i=x^\ast_j\otimes x_i$. Then the co-action of $\fnd(A)$ on $A$ is
given in terms of generators by
\begin{equation}
\label{ref-4.1-25}
\delta(x_i)=\sum_k z_i^k\otimes x_k
\end{equation}
and the bialgebra structure on $\fnd(A)$ is given by
\begin{equation}
\label{ref-4.2-26}
\begin{aligned}
\Delta(z^j_i)&=\sum_k z^k_i\otimes z_k^j\\
\epsilon(z^j_i)&=\delta^j_i
\end{aligned}
\end{equation}
Below we will be interested in the finite dimensional
co-representations of the bialgebra $\fnd(A)$. We first note that
$\fnd(A)$ is a graded algebra by putting $\deg z^j_i=1$. We denote its part
of degree $n$ by $\fnd(A)_n$. It follows immediately from \eqref{ref-4.2-26} that
$\fnd(A)_n$ is stable under $\Delta$ and when equipped with the restriction of
$\epsilon$ it becomes a coalgebra.

We now recall some facts about sums of coalgebras.
Let $(C_i,\Delta_i,\epsilon_i)_i$ be a family of coalgebras. Put $C=\bigoplus_i C_i$.
Then $C$ is a coalgebra with comultiplication $\Delta(c)=\bigoplus_i \Delta_i(c_i)$
and counit $\epsilon(c)=\sum_i \epsilon_i(c_i)$ for $c=\bigoplus_i c_i$ with $c_i\in C_i$.

Suppose we are given comodules $(W_i,\delta_i)$ for $C_i$. Then
clearly $W=\bigoplus_i W_i$ is a $C$-comodule with coaction
$\delta(w)=\bigoplus_i\delta_i(w_i)$ for $w=\bigoplus_i w_i$, $w_i\in
W_i$.
\begin{lemma} \label{ref-4.1-27} The functor
\[
\Psi:\bigoplus_i \operatorname{CoMod}(C_i)\r \operatorname{CoMod}(C):\bigoplus_i W_i\mapsto
\bigoplus_i W_i
\]
is an equivalence of categories.
\end{lemma}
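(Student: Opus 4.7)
The plan is to produce an explicit quasi-inverse to $\Psi$ by showing that every $C$-comodule canonically decomposes as a direct sum of $C_i$-comodules, and then observing that $C$-comodule morphisms respect this decomposition.

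First I would construct the decomposition. Given a $C$-comodule $(W,\delta)$, define
\[
W_i=\{w\in W\mid \delta(w)\in W\otimes C_i\}.
\]
The essential step is to check that $W=\bigoplus_i W_i$. For each $w\in W$ write $\delta(w)=\sum_i d_i$ with $d_i\in W\otimes C_i$, and set $w_i=(1\otimes\epsilon_i)(d_i)$. The counit axiom for $C$ together with the identity $\epsilon=\sum_i\epsilon_i\circ\pi_i$ immediately gives $\sum_i w_i=w$. To see that $w_i\in W_i$ one applies coassociativity to $d_i$: since $\Delta(c)\in C_i\otimes C_i$ for $c\in C_i$, one has $(\delta\otimes 1)(d_i)=(1\otimes\Delta_i)(d_i)\in W\otimes C_i\otimes C_i$, and then applying $1\otimes 1\otimes \epsilon_i$ on the right yields $\delta(w_i)=d_i\in W\otimes C_i$. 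Disjointness $W_i\cap W_j=0$ for $i\ne j$ is immediate, since any $w$ in the intersection would have $\delta(w)\in (W\otimes C_i)\cap (W\otimes C_j)=0$, whence $w=(1\otimes\epsilon)\delta(w)=0$.

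Next I would verify that each $W_i$ is a sub-$C$-comodule on which the coaction factors through $W_i\otimes C_i$, so that it inherits a $C_i$-comodule structure. That $\delta(W_i)\subset W_i\otimes C_i$ rather than merely $W\otimes C_i$ follows again from coassociativity: for $w\in W_i$, $(\delta\otimes 1)\delta(w)=(1\otimes\Delta_i)\delta(w)\in W\otimes C_i\otimes C_i$ shows that the left tensor factors lie in $W_i$. This promotes the decomposition $W=\bigoplus_i W_i$ to a decomposition in $\CoMod(C)$, with each summand a $C_i$-comodule, giving essential surjectivity of $\Psi$.

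For full faithfulness, a $C$-colinear map $f\colon\bigoplus_i W_i\to\bigoplus_j W'_j$ must send $W_i$ into $W'_i$, because $\delta'(f(w))=(f\otimes 1)\delta(w)\in W'\otimes C_i$ whenever $w\in W_i$, so $f(w)\in W'_i$ by the defining property. Consequently
\[
\Hom_C\left(\bigoplus_i W_i,\bigoplus_i W'_i\right)=\prod_i\Hom_{C_i}(W_i,W'_i),
\]
which is exactly the $\Hom$-set in $\bigoplus_i\CoMod(C_i)$. The only mild point requiring care is to confirm that the decomposition $W=\bigoplus_i W_i$ constructed above is functorial and agrees with $\Psi$ on objects of the form $\bigoplus_i W_i$; both checks are formal once the coassociativity computation has been done. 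The one genuine obstacle is the coassociativity argument that $w_i\in W_i$ and that $\delta(W_i)\subset W_i\otimes C_i$; everything else is bookkeeping with the counit.
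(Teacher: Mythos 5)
Your proposal is correct and takes essentially the same approach as the paper: the paper's proof simply exhibits the idempotents $p_i(w)=\sum\epsilon_i(w_{(1)})w_{[2]}$ giving the decomposition, which is exactly your $w_i=(1\otimes\epsilon_i)\delta(w)$ (modulo a left/right comodule convention). You have merely spelled out the coassociativity verifications that the paper leaves to the reader, plus the full-faithfulness argument, so there is nothing substantively different. One small caution: you only assert pairwise disjointness $W_i\cap W_j=0$, which in general does not yield a direct sum; however your construction of the $w_i$ already shows uniqueness of the decomposition (if $w=\sum w_i'$ with $w_i'\in W_i$ then $d_i=\delta(w_i')$ and so $w_i'=w_i$), so the directness is in fact established—it would be cleaner to phrase it that way.
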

\begin{proof} This is well known. If $W\in \operatorname{CoMod}(C)$ then it decomposes
as $(C_i)_i$-comodules via $W=\bigoplus_i p_i(W)$ where $p_i(w)=\sum_w \epsilon_i(w_{(1)})w_{[2]}$ for $\delta(w)=\sum_w w_{(1)}\otimes w_{[2]}$.
\end{proof}
From this proposition we deduce that in order to understand the representation
theory of $\fnd(A)$ it is sufficient to understand the representation theory
of $\fnd(A)_n$.
Since $\fnd(A)_n$ is finite dimensional we have
\[
\CoMod(\fnd(A)_n)\cong\Mod(\fnd(A)^{\ast\circ}_n)
\]
Hence we have to describe the algebras $\fnd(A)^{\ast\circ}_n$. We do this next. For
$n<N$ define
\[
Z_n(A)=\End_k(V^{\pt n})
\]
and for $n\ge N$
\[
Z_n(A)=\left\{\varphi \in \End_k(V^{\pt n}) \mid \forall i\in
\{1,\ldots,n-N+1\}: \varphi(R_i^{(n)})\subset R_i^{(n)} \right\}
\]
where $R_i^{(n)}=V^{\otimes i-1}\pt R \pt V^{\otimes n-N-i+1}\subset V^{\otimes n}$.
\begin{proposition}
\label{ref-4.2-28}
We have as algebras
\label{ref-4.2-29}
\[
\fnd(A)^{\ast\circ}_n\cong Z_n(A)
\]
\end{proposition}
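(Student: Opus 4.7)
The plan is to identify both sides as subspaces of $\End(V^{\otimes n})$ under a single natural iso, after which matching the multiplicative structures is routine. Concretely, the composition of the shuffle isomorphism with the canonical identification $V^{*\otimes n}\otimes V^{\otimes n}\cong \End(V^{\otimes n})$, $f\otimes v\mapsto (w\mapsto f(w)v)$, yields an iso
\[
\rho_n:(V^*\otimes V)^{\otimes n}\xrightarrow{\sim}\End(V^{\otimes n})
\]
sending the monomial $z_{i_1}^{j_1}\otimes\cdots\otimes z_{i_n}^{j_n}$ to the matrix unit $E_{(i_1,\ldots,i_n),(j_1,\ldots,j_n)}$ in the obvious basis of $V^{\otimes n}$.

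The crucial point in describing the relations $U_n\subset(V^*\otimes V)^{\otimes n}$ under $\rho_n$ is the identification of $R^\perp\otimes R\subset V^{*\otimes N}\otimes V^{\otimes N}\cong\End(V^{\otimes N})$ with $\{\varphi:\im\varphi\subseteq R,\ \varphi|_R=0\}$; one inclusion is immediate, and equality follows from the dimension count $\dim R^\perp\cdot\dim R=\dim\Hom(V^{\otimes N}/R,R)$. Tensoring this on either side with $\End(V^{\otimes i})$ and $\End(V^{\otimes(n-N-i)})$ shows that $\rho_n$ carries the $i$-th generating summand of $U_n$ onto $\{\Phi\in\End(V^{\otimes n}):\im\Phi\subseteq R^{(n)}_{i+1},\ \Phi|_{R^{(n)}_{i+1}}=0\}$.

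I would then dualize using the trace pairing, which identifies $\End(V^{\otimes n})^*\cong\End(V^{\otimes n})$ and turns $\fnd(A)_n^*$ into $\rho_n(U_n)^\perp$. Factoring any such $\Phi$ as $V^{\otimes n}\twoheadrightarrow V^{\otimes n}/R^{(n)}_{i+1}\to R^{(n)}_{i+1}\hookrightarrow V^{\otimes n}$ and invoking cyclicity of the trace reduces orthogonality to the $i$-th summand to the single condition $\Psi(R^{(n)}_{i+1})\subseteq R^{(n)}_{i+1}$; intersecting over $i$ gives $U_n^\perp=Z_n(A)$ on the nose. Finally, a direct calculation on matrix units using $\Delta(z_i^j)=\sum_k z_i^k\otimes z_k^j$ shows that the dual-coalgebra multiplication on $\fnd(A)_n^*$ coincides with the \emph{opposite} of matrix multiplication on $Z_n(A)\subset\End(V^{\otimes n})$, yielding the claimed iso $\fnd(A)_n^{\ast\circ}\cong Z_n(A)$. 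The one real obstacle is simply bookkeeping: keeping shuffle iso conventions straight and tracking the evaluation- versus trace-pairings carefully enough to recognize the opposite algebra appearing in the final answer.
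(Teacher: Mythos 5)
Your proposal is correct and follows essentially the same route as the paper's: compute $\fnd(A)_n^*$ as a perpendicular inside $\End(V^{\otimes n})$, reduce the key orthogonality condition against $R^\perp\otimes R$ to the invariance $\varphi(R^{(n)}_i)\subseteq R^{(n)}_i$, and verify the opposite-algebra structure by a direct calculation on basis elements. Your trace-cyclicity argument via the factorization through $V^{\otimes n}\twoheadrightarrow V^{\otimes n}/R^{(n)}_{i+1}\to R^{(n)}_{i+1}\hookrightarrow V^{\otimes n}$ neatly supplies the ``small computation'' that the paper states without proof.
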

\begin{proof} We consider the most difficult case $n\ge N$. In that case
the result follows by the following computation.
\begin{align*}
\en(A)_n^*&=\left( (V^*\pt V)^{\pt n} / (\pi_N(R^\perp \pt R))_n \right)^*\\
&= \left( (V^*\pt V)^{\pt n} / \sum_{i=0}^{n-N} (V^{*}\pt V)^{i} \pt \pi_N(R^\perp \pt R) \pt (V^{*}\pt V)^{n-N-i}  \right)^*\\
&\cong \left( (V^{* \pt n}\pt V^{\pt n}) / \sum_{i=0}^{n-N} V^{* i}\pt R^\perp \pt V^{* n-N-i} \pt V^{i}\pt R \pt V^{n-N-i}  \right)^*\\
&\subset (V^{* \pt n}\pt V^{\pt n})^\ast
\end{align*}
Writing $Y_i=V^{* \otimes i}\pt R^\perp \pt V^{* \otimes n-N-i} \pt V^{\otimes i}\pt R \pt
V^{\otimes n-N-i}$ we get
\begin{equation}
\label{ref-4.3-30}
\begin{aligned}
\en(A)_n^*& \cong  \left(\sum_{i=0}^{n-N} Y_i\right)^\perp \\
& =  \bigcap_{i=0}^{n-N} Y_i^\perp\\
&= \left\{\varphi \in (V^{* \pt n}\pt V^{\pt n})^* \mid \forall i: \varphi(Y_i)=0 \right\}\\
&= \left\{\varphi \in \End_k(V^{\pt n}) \mid \forall i:
\varphi(V^{i}\pt R \pt V^{n-N-i})\subseteq V^{i}\pt R \pt V^{n-N-i}
\right\}
\end{aligned}
\end{equation}
via the canonical isomorphism $(E^\ast\pt E)^\ast \cong \End_k(E)$
(for finite dimensional $E$).

 Here we have to make the small
computation that if $X\subseteq E$ and $\Psi \in (E^\ast\pt E)^\ast$
then $\Psi(X^\perp \pt X)=0$ if and only if the corresponding element
$\varphi$ in $\End_k(E)$ satisfies $\varphi(X)\subseteq X$.

We claim that the vector space isomorphism \eqref{ref-4.3-30} is compatible
with the given algebra structures on both sides, up to exchanging factors. To verify this we may assume
that $R=0$, i.e.\ $A=TV$. Then we have to show that the following isomorphism
\begin{align*}
T(V^\ast\otimes V)^\ast_n&\cong ((V^\ast\otimes V)^{\otimes n})^\ast\\
&\cong ((V^\ast)^{\otimes n}\otimes V^{\otimes n})^\ast\\
&\cong V^{\otimes n}\otimes (V^\ast)^{\otimes n}\\
&\cong \End(V^{\otimes n})^\circ
\end{align*}
is compatible with the algebra structure. We verify this using the explicit bases introduced
above. The comultiplication on $T(V^\ast\otimes V)_n$ is given by
\[
\Delta(z^{j_1}_{i_i}\cdots z^{j_n}_{i_n})=\sum_{k_1\cdots k_n} z^{k_1}_{i_1}
\cdots z^{k_n}_{i_n}\otimes z^{j_1}_{k_1}\cdots z^{j_n}_{k_n}
\]
Hence the multiplication on $T(V^\ast\otimes V)_n^\ast$ is given by
\[
z^{k_1\ast}_{i_1}
\cdots z^{k_n\ast}_{i_n}\cdot z^{j_1\ast}_{l_1}\cdots z^{j_n\ast}_{l_n}=
\delta^{k_1}_{l_1}\cdots \delta^{k_n}_{l_n} z^{j_1\ast}_{i_i}\cdots z^{j_n\ast}_{i_n}
\]
By definition $z^j_i=x_j^\ast\otimes x_i$. Then $z^{j\ast}_i$ when
considered as an element of $V\otimes V^\ast$ is given by
$x_j\otimes x^\ast_i$.

It follows that the multiplication on $V^{\otimes n}\otimes (V^\ast)^{\otimes n}$ is given
by
\begin{equation}
\label{ref-4.4-31}
\bigl( x_{k_1}\cdots x_{k_n}\otimes x^\ast_{i_1}\cdots x^\ast_{i_n}\bigr)
\cdot
\bigl( x_{j_1}\cdots x_{j_n}\otimes x^\ast_{l_1}\cdots x^\ast_{il_n}\bigr)
=
\delta^{k_1}_{l_1}\cdots \delta^{k_n}_{l_n}
x_{j_1}\cdots x_{j_n}\otimes x^\ast_{i_1}\cdots x^\ast_{i_n}
\end{equation}
Since $x_{j_1}\cdots x_{j_n}\otimes x^\ast_{i_1}\cdots x^\ast_{i_n}$
corresponds to the endomorphism of $V^{\otimes n}$ which sends
$x_{i_1}\cdots x_{i_n}$ to $x_{j_1}\cdots x_{j_n}$ and all other basis
elements to zero, we see that \eqref{ref-4.4-31} corresponds precisely to
the opposite multiplication on $\End(V^{\otimes n})$.
\end{proof}
As $\fnd(A)$ is a graded algebra (see above) the multiplication induces vector space homomorphisms
\[
\fnd(A)_m\otimes \fnd(A)_n\r \fnd(A)_{m+n}
\]
which one checks to be coalgebra homomorphisms.

Hence there are dual algebra morphisms
\begin{equation}
\label{ref-4.5-32}
\mu_{m,n}:\fnd(A)^{\ast\circ}_{m+n}\r \fnd(A)^{\ast\circ}_m\otimes \fnd(A)^{\ast\circ}_n
\end{equation}
These are described in the next proposition.
\begin{proposition}
 Under the isomophism $\fnd(A)_n^{\circ\ast}\cong Z_n(A)$ given in
Proposition \ref{ref-4.2-29} the algebra morphism
\eqref{ref-4.5-32} fits in the following commutative diagram
\[
\xymatrix{
Z_{m+n}(A) \ar[r] \ar@{^(->}[d] & Z_m(A)\otimes Z_n(A)\ar@{^(->}[d]\\
 \End(V^{\otimes m+n})\ar[r]_-{\cong} & \End(V^{\otimes m})\otimes \End(V^{\otimes n})
}
\]
where the lower map is the obvious algebra isomorphisms
\end{proposition}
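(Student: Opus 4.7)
The plan is to reduce to the free case $A=TV$ by naturality, then verify the diagram explicitly on the bases introduced in the proof of Proposition \ref{ref-4.2-29}.

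For the reduction, I would use the fact that the surjection $TV \twoheadrightarrow A$ induces a surjective graded bialgebra morphism $p\colon \fnd(TV) \twoheadrightarrow \fnd(A)$. Dualizing in each degree yields an injection $\fnd(A)_n^{\ast\circ} \hookrightarrow \fnd(TV)_n^{\ast\circ}$, which under the identifications of Proposition \ref{ref-4.2-29} is precisely the inclusion $Z_n(A) \hookrightarrow \End(V^{\otimes n}) = Z_n(TV)$. Because $p$ respects comultiplication, these inclusions intertwine the multiplications on $\fnd(A)^{\ast\circ}_\bullet$ and on $\fnd(TV)^{\ast\circ}_\bullet$, so $\mu_{m,n}$ for $A$ is obtained by restricting $\mu_{m,n}$ for $TV$. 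It therefore suffices to verify the stated diagram for $A = TV$, where the vertical arrows become identities.

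For $A = TV$ we have $\fnd(A) = T(V^\ast \otimes V)$, whose multiplication is simply concatenation of tensor factors. Thus on dual basis elements the map $\mu_{m,n}$ sends $(z^{j_1}_{i_1}\cdots z^{j_{m+n}}_{i_{m+n}})^\ast$ to the pure tensor $(z^{j_1}_{i_1}\cdots z^{j_m}_{i_m})^\ast \otimes (z^{j_{m+1}}_{i_{m+1}}\cdots z^{j_{m+n}}_{i_{m+n}})^\ast$. Under the correspondence established in the proof of Proposition \ref{ref-4.2-29}, where $(z^{j_1}_{i_1}\cdots z^{j_n}_{i_n})^\ast$ maps to the rank-one endomorphism of $V^{\otimes n}$ sending $x_{i_1}\cdots x_{i_n}$ to $x_{j_1}\cdots x_{j_n}$ and killing all other basis monomials, this splitting is exactly the canonical factorization $\End(V^{\otimes m+n}) \cong \End(V^{\otimes m}) \otimes \End(V^{\otimes n})$.

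The main potential obstacle is bookkeeping: one must carefully track the passage to duals and opposites, recalling that in Proposition \ref{ref-4.2-29} the natural algebra identification is with $\End(V^{\otimes n})^\circ$ rather than $\End(V^{\otimes n})$. However, the opposites appear symmetrically on both sides of the diagram, so no direction is reversed; the substantive content of the argument is thus exhausted by the naturality reduction and the basis computation already performed for the preceding proposition.
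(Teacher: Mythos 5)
Your proof is correct and takes essentially the same route as the paper: reduce to the free case $A=TV$ (where the vertical inclusions become identities) and then verify the diagram on explicit bases, relying on the basis computation already carried out in the proof of Proposition~\ref{ref-4.2-29}. The paper leaves the reduction implicit and simply asserts the free-case diagram is "an easy verification", whereas you spell out why the restriction to $Z_n(A)\hookrightarrow\End(V^{\otimes n})$ follows from the surjection $\fnd(TV)\twoheadrightarrow\fnd(A)$ — a welcome clarification but not a different method.
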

\begin{proof}
We have to verify the commutativity of the following diagram
\[
\xymatrix{
((V^\ast\otimes V)^{\otimes (m+n)})^\ast\ar[r]\ar[d] &
((V^\ast\otimes V)^{\otimes m})^\ast\otimes \ar[d]
((V^\ast\otimes V)^{\otimes n})^\ast \\
\End(V^{\otimes(m+n)})^\circ\ar[r] & \End(V^{\otimes m })^\circ \otimes \End(V^{\otimes n})^\circ
}
\]
This is again an easy verification, e.g.\ using explicit bases.
\end{proof}
By
Lemma \ref{ref-4.1-27},\ref{ref-4.2-28} we have
\[
\CoMod(\fnd(A))=\bigoplus_n \CoMod(\fnd(A)_n)=\bigoplus_n\Mod(\fnd(A)^{\ast\circ}_n)=
\bigoplus_n\Mod(Z_n(A))
\]
Furthermore  the induced monoidal structure on the righthand side is as follows:
let $W_1\in \Mod(Z_m(A))$, $W_2\in\Mod(Z_n(A))$. Then $W_1\otimes W_2\in \Mod(Z_{m+n}(A))$
is the pullback under $\mu_{m,n}$ of $W_1\otimes W_2$, considered as
$Z_m(A)\otimes Z_n(A)$ module

We deduce the well-know result (see e.g.\
\cite{HL1}).
\begin{proposition}
\label{ref-4.4-33}
Let $\Lscr_n(A)$ be the lattice of subspaces of $V^{\otimes n}$ generated
by $(R_i^{(n)})_i$. Then all the objects in $\Lscr_n(A)$ are $Z_n(A)$-modules and hence
$\fnd(A)$-comodules.
\end{proposition}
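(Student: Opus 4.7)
The plan is essentially to observe that the statement follows almost formally from the description of $Z_n(A)$ given in Proposition \ref{ref-4.2-28}, together with the basic fact that the set of submodules of any module is closed under sums and intersections.

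First I would note that, by the very definition
\[
Z_n(A)=\{\varphi\in\End_k(V^{\otimes n})\mid \varphi(R_i^{(n)})\subset R_i^{(n)}\text{ for all }i\},
\]
each generator $R_i^{(n)}$ of $\Lscr_n(A)$ is a $Z_n(A)$-submodule of $V^{\otimes n}$. Next I would invoke the elementary observation that if $W_1,W_2\subset V^{\otimes n}$ are stable under some $\varphi\in\End_k(V^{\otimes n})$, then so are $W_1+W_2$ and $W_1\cap W_2$. Applying this inductively to the operations that generate the lattice $\Lscr_n(A)$ from the family $(R_i^{(n)})_i$ shows that every element of $\Lscr_n(A)$ is stable under the action of every $\varphi\in Z_n(A)$, hence is a $Z_n(A)$-submodule of $V^{\otimes n}$.

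Finally I would translate this back to a statement about comodules. By Proposition \ref{ref-4.2-28} we have an algebra isomorphism $\fnd(A)_n^{\ast\circ}\cong Z_n(A)$, and therefore
\[
\CoMod(\fnd(A)_n)\cong \Mod(\fnd(A)_n^{\ast\circ})\cong \Mod(Z_n(A));
\]
combining this with the decomposition $\CoMod(\fnd(A))=\bigoplus_n\CoMod(\fnd(A)_n)$ from Lemma \ref{ref-4.1-27}, any $Z_n(A)$-submodule of $V^{\otimes n}$ is canonically an $\fnd(A)$-subcomodule of $V^{\otimes n}$. This yields the claim.

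There is essentially no obstacle here: the result is a direct unpacking of the definition of $Z_n(A)$ plus the trivial stability of submodule-hood under sum and intersection, packaged through the algebra/coalgebra duality already established. The only thing one has to be careful with is to recall that sums and intersections in $\Lscr_n(A)$ are taken as subspaces of $V^{\otimes n}$ so that stability is preserved, which is automatic.
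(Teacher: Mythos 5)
Your proof is correct and follows the same approach as the paper's, which simply observes that the $R_i^{(n)}$ are $Z_n(A)$-stable by definition and hence so is every lattice element; you merely spell out the closure under sums and intersections and the comodule translation more explicitly.
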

\begin{proof} The $R_i^{(n)}$ are obviously $Z_n(A)$ representations. Hence
so are all objects in $\Lscr_n(A)$.
\end{proof}
\section{Distributive algebras}
Let $A=TV/(R)$ be as above. Below we assume that $A$ is distributive.

For $I\subset \{1,\ldots,n-N+1\}$ we define
\[
C^{(n)}_I=R_I^{(n)}\biggl/\sum_{J\supsetneq I} R^{(n)}_{J}
\]
We will say that $I\subset \{1,\ldots,n-N+1\}$ is \emph{admissible} if $C_I\neq 0$.
Strictly speaking these definitions only make sense if $n\ge N$. For
$n<N$ we will put $\{1,\ldots,n-N+1\}=\emptyset$, and hence we only consider
$I=\emptyset$, $R_\emptyset=V^{\otimes n}$, $C_\emptyset=V^{\otimes n}$.

\begin{proposition}
\label{ref-5.1-34}
\begin{enumerate}
\item
The indecomposable projective $Z_n(A)$-representations are the $R^{(n)}_I$ for
$I$ admissible.
\item
The simple $Z_n(A)$-representations  are the $C^{(n)}_I$ for $I$ admissible.
\item $R^{(n)}_I$ is the projective cover of $C^{(n)}_I$.
\item For admissible $I,J$ we have
\[
\Hom_{Z_n(A)}(R^{(n)}_J,R^{(n)}_I)=
\begin{cases}
k &\text{if $J\supset I$}\\
0&\text{otherwise}
\end{cases}
\]
When $J\supset I$, the generator of
$\Hom_{Z_n(A)}(R^{(n)}_J,R^{(n)}_I)$ is given by the inclusion
$R_J^{(n)}\subset R^{(n)}_I$.
\end{enumerate}
\end{proposition}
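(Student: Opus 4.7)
The strategy is to leverage distributivity to choose a basis of $V^{\otimes n}$ simultaneously adapted to the lattice $\Lscr_n(A)$, so that $Z_n(A)$ becomes a concrete ``staircase'' matrix algebra from which all four claims follow by standard facts about basic semiperfect algebras.

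The first (and main technical) step is to use distributivity of $\Lscr_n(A)$ to choose a direct-sum decomposition $V^{\otimes n}=\bigoplus_\beta kv_\beta$ with the property that each generator $R_i^{(n)}$, and hence each $R_I^{(n)}$, is spanned by a subset of the basis $(v_\beta)$; this is the lattice-theoretic content of distributivity. For each basis vector $v_\beta$ put $I_\beta=\{i:v_\beta\in R_i^{(n)}\}$ (its ``cell''). Then
\[
R_I^{(n)}=\mathrm{span}\{v_\beta:I_\beta\supset I\},\qquad \sum_{J\supsetneq I}R_J^{(n)}=\mathrm{span}\{v_\beta:I_\beta\supsetneq I\},
\]
so $C_I^{(n)}=\mathrm{span}\{v_\beta:I_\beta=I\}$, and $I$ is admissible precisely when its cell is non-empty.

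With this basis, $\varphi\in \End_k(V^{\otimes n})$ lies in $Z_n(A)$ iff its matrix $M=(M_{\alpha\beta})$ satisfies $M_{\alpha\beta}=0$ unless $I_\alpha\supset I_\beta$. Thus $Z_n(A)$ is the full ``staircase'' algebra for the preorder $\alpha\preceq\beta\Leftrightarrow I_\alpha\supset I_\beta$; its Jacobson radical is the ideal of matrices with $I_\alpha\supsetneq I_\beta$, and its semisimple quotient is $\bigoplus_{I\text{ admissible}}\End_k(C_I^{(n)})$. Parts (1)--(4) now follow immediately: the primitive idempotents up to conjugacy are labelled by admissible cells $I$; the simple $Z_n(A)$-module corresponding to $I$ is $C_I^{(n)}$, proving (2); the projective cover $Z_n(A)e_I$ (for any primitive $e_I$ in the $I$-block) is spanned by the matrix units $E_{\alpha\alpha_0}$ with $I_\alpha\supset I$, which as a left $Z_n(A)$-module is isomorphic to $R_I^{(n)}$ via $E_{\alpha\alpha_0}\mapsto v_\alpha$, giving (1) and (3). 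Finally
\[
\Hom_{Z_n(A)}(R_J^{(n)},R_I^{(n)})=e_J Z_n(A) e_I
\]
consists of scalar multiples of $E_{\beta_0\alpha_0}$ (for $\beta_0$ in cell $J$ and $\alpha_0$ in cell $I$), which can be nonzero only when $I_{\beta_0}=J\supset I=I_{\alpha_0}$; under the isomorphism above the generator in that case is the inclusion $R_J^{(n)}\subset R_I^{(n)}$, establishing (4).

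The main obstacle is the lattice-theoretic ingredient of the first step, namely that every finite distributive lattice of subspaces of a finite-dimensional vector space admits a common adapted basis. This is a classical fact and can be proved directly by induction on the number of generators, each inductive step using the distributive identity $R_i\cap(R_j+R_k)=(R_i\cap R_j)+(R_i\cap R_k)$ to refine a previously chosen adapted decomposition by splitting each piece along the new generator. Once this concretization is in hand, the remainder of the argument is routine matrix algebra.
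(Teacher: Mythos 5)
Your proposal is correct and follows essentially the same route as the paper: both use the adapted basis supplied by distributivity (the paper cites \cite[Prop.~7.1]{PP} for this), decompose $Z_n(A)$ as a ``staircase'' algebra graded by the poset of cells/admissible $I$, and then read off projectives, simples, projective covers, and Hom spaces from this triangular decomposition. The only differences are cosmetic (your cells $I_\beta$ versus the paper's $\tilde{C}_I^{(n)}$, matrix units versus $\Hom(kw_{\alpha_I},\tilde{C}_J^{(n)})$).
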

\begin{proof}
Since the $(R^{(n)}_i)_i$ generate the distributive lattice $\Lscr_n(A)$   there exists a
 basis $(w_\alpha)_\alpha$ for $V^{\pt n}$  such
 that each vector space in $\Lscr_n(A)$ is spanned by a subset of the~$w_\alpha$ \cite[Prop.\ 7.1]{PP}).

Let $e_\alpha$ be the primitive idempotent in $\End(V^{\otimes n})$
corresponding to the basis vector $w_\alpha$. I.e.\ $e_\alpha$ is the
projection on $kw_\alpha$.  Then $e_\alpha$ preserves all $R_i^{(n)}$ and hence $e_\alpha\in Z_n(A)$.
Then the $(e_\alpha)_\alpha$ form still a
maximal set of orthogonal idempotents in~$Z_n(A)$.

For $I\subset \{1,\ldots,n-N+1\}$ let $\tilde{C}_I^{(n)}$ be the
subspace of $R^{(n)}_I$ spanned by the $w_\alpha$ which are not in
some $R_J$, $J\supsetneq I$. Clearly $C_I^{(n)}\cong
\tilde{C}^{(n)}_I$ and hence $\tilde{C}_I^{(n)}$ is non-zero if and
only if $I$ is admissible.

We have
\[
R_I^{(n)}=\bigoplus_{J\supset I} \tilde{C}^{(n)}_J
\]
It follows that
\[
\End(V^{\otimes n})=\bigoplus_{I,J} \Hom(\tilde{C}^{(n)}_I,\tilde{C}^{(n)}_J)
\]
and
\begin{equation}
\label{ref-5.1-35}
Z_n(A)=\bigoplus_{J\supset I} \Hom(\tilde{C}_I^{(n)},\tilde{C}_J^{(n)})
\end{equation}
Thus
\[
Z_n(A)/\rad(Z_n(A))=\bigoplus_{I\text{\ adm.}} \Hom(\tilde{C}_I^{(n)},\tilde{C}_I^{(n)})
\]
For every admissible $I$ pick an $\alpha_I$ such that $w_{\alpha_I}\in \tilde{C}^{(n)}_I$. Then
the indecomposable projectives in $Z_n(A)$-modules are given by
\[
Q_I=Z_n(A) e_{\alpha_I}
\]
We find
\[
Q_I=Z_n(A) e_{\alpha_I}=
\bigoplus_{J\supset I} \Hom(kw_{\alpha_I},\tilde{C}^{(n)}_J)=\Hom(kw_{\alpha_I},R^{(n)}_I)\cong R^{(n)}_I
\]
which proves (1).
We also find using the decomposition \eqref{ref-5.1-35}
\begin{align*}
\Hom_{Z_n(A)}(Q_J,Q_I)&=e_{\alpha_J}Z_n(A) e_{\alpha_I}\\
&=
\begin{cases}
k& \text{if $J\supset I$}\\
0& \text{otherwise}
\end{cases}
\end{align*}
which proves (4).

The simple top of $Q_I$ for $I$ admissible may be computed as the cokernel of
\[
\bigoplus_{Q_J\not\cong Q_I} Q_J \otimes \Hom_{Z_n(A)}(Q_J,Q_I)\r Q_I
\]
which is the same as the cokernel of
\[
\bigoplus_{R_J^{(n)}\subsetneq R_I^{(n)}} R^{(n)}_J \r R^{(n)}_I
\]
which is precisely $C_I^{(n)}$, proving (2) and (3).
\end{proof}
\begin{corollary}
\label{ref-5.2-36}
The category $\Mod(Z_n(A))$ is equivalent to the category
$\Rep(Q_{A,n-N+1})$ where $Q_{A,n-N+1}$ is the quiver with relations
which has a vertex $x_I$ for every admissible $I\subset
\{1,\ldots,n-N+1\}$ and arrows $x_{IJ}:x_I\r x_J$ for every $I\subset
J$.  The relations are given by $x_{JK}x_{IJ}=x_{IK}$. If $n-N+1\le 0$ then
$Q_{A,n-N+1}$ is a single vertex $x_\emptyset$ and no arrows.
\end{corollary}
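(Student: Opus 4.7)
The plan is to apply Morita theory to Proposition \ref{ref-5.1-34}. I would set $P = \bigoplus_{I \text{ admissible}} R^{(n)}_I$; parts (1)--(3) of Proposition \ref{ref-5.1-34} show that $P$ is a projective generator of $\Mod(Z_n(A))$ containing each indecomposable projective exactly once. By standard Morita theory, $\Hom_{Z_n(A)}(P,-)$ yields an equivalence
\[
\Mod(Z_n(A)) \xrightarrow{\sim} \Mod(\End_{Z_n(A)}(P)^{\circ}),
\]
so it remains to identify the basic algebra $B = \End_{Z_n(A)}(P)^{\circ}$ with the path algebra $kQ_{A,n-N+1}$ modulo the stated relations.

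Next I would write down this isomorphism explicitly using part (4). For each pair of admissible subsets $I \subset J$ let $\iota_{IJ}: R^{(n)}_J \hookrightarrow R^{(n)}_I$ be the inclusion; part (4) says that these provide a $k$-basis of $\End_{Z_n(A)}(P) = \bigoplus_{I \subset J} \Hom_{Z_n(A)}(R^{(n)}_J, R^{(n)}_I)$. Composition of inclusions gives $\iota_{IJ} \circ \iota_{JK} = \iota_{IK}$ whenever $I \subset J \subset K$, with all other composites zero. Sending the quiver arrow $x_{IJ}$ to $\iota_{IJ}$, viewed inside the opposite algebra $B$, reverses the order of composition, so the defining relation $x_{JK} x_{IJ} = x_{IK}$ of the path algebra becomes $\iota_{IJ} \circ \iota_{JK} = \iota_{IK}$ in $\End_{Z_n(A)}(P)$, which is exactly what we just verified. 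This defines a surjective algebra map from $kQ_{A,n-N+1}/(x_{JK}x_{IJ} - x_{IK})$ to $B$ which is visibly a bijection on distinguished bases, hence an isomorphism.

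The degenerate case $n - N + 1 \le 0$ is trivial: $Z_n(A) = \End_k(V^{\otimes n})$ is Morita equivalent to $k$, the only admissible set is $I = \emptyset$, and $Q_{A,n-N+1}$ reduces to a single vertex with no nontrivial paths, so both sides of the equivalence are $\Vect$. There is no substantive obstacle here; the only thing to watch out for is the opposite-algebra convention, which is needed to align the direction of the quiver arrows $x_{IJ}: x_I \to x_J$ with the inclusions $R^{(n)}_J \hookrightarrow R^{(n)}_I$ that go the other way.
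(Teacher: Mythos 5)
Your proposal is correct and takes essentially the same route as the paper: the paper's proof also invokes Morita theory via the functor $\Hom_{Z_n(A)}\bigl(\bigoplus_{I\text{ adm.}} R^{(n)}_I,-\bigr)$ and the assignment $P_I\mapsto R^{(n)}_I$ for the inverse. You have simply spelled out the identification $\End_{Z_n(A)}(P)^{\circ}\cong kQ_{A,n-N+1}/(x_{JK}x_{IJ}-x_{IK})$ and the opposite-algebra bookkeeping that the paper leaves implicit.
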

\begin{proof} This is an easy consequence of the description of the indecomposable
projectives $Z_n(A)$-representations in Proposition \ref{ref-5.1-34}. The
functor realizing the stated equivalence is $\Hom_{Z_n(A)}
(\bigoplus_{I\text{ adm.}} R_I,-)$. If $P_I$ is the projective $Q_{A,n-N+1}$ representation
corresponding to the vertex $I$ then the inverse equivalence sends $P_I$ to~$R_I$.
\end{proof}
For use below we denote the inverse equivalence alluded to in the above proof by
$\Fscr^\circ_{A}$.

If we apply Theorem \ref{ref-3.1.1-3} with $\Cscr=\CoMod(\fnd(A)_n)$ then we
obtain an exact functor
\[
\Fscr_A:\Cube_{n-N+1}\r \CoMod(\fnd(A)_n)
\]
which sends $P_I^{(n-N+1)}$ to $R^{(n-N+1)}_I$
and which extends to
an exact monoidal functor
\[
\Fscr_A:\Cube_{\bullet}\r \CoMod(\fnd(A))
\]
The functor $\Fscr_A$ is an enhancement of the functor $F_A$ we used before as it fits in
the following commutative diagram
\[
\xymatrix{%
& \CoMod(\fnd(A)_n)\ar[dr]^{\text{forget}} &\\
\Cube_{n-N+1}\ar[ur]^{\Fscr_A}\ar[rr]_{F_A} &&\Vect
}
\]
\begin{proposition}
\label{ref-5.3-37}
The functor $\Fscr_A$ defines an equivalence
\[
\bar{\Fscr}_A:\Cube_{\bullet}/\Sscr_A\cong \CoMod(\fnd(A))
\]
where $\Sscr_A$ is
the localizing subcategory  of $\Cube_{\bullet}$ generated by  the
simples $S_I^{(l)}$ with $I$ not admissible.
\end{proposition}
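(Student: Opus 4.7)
The plan is to reduce the statement to a degree-by-degree equivalence and then recognize it as the standard Serre localization arising from idempotent truncation of a finite-dimensional algebra.

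First I would use Lemma \ref{ref-4.1-27} and Proposition \ref{ref-4.2-28} to identify $\CoMod(\fnd(A)) = \bigoplus_n \Mod(Z_n(A))$ compatibly with the decomposition $\Cube_\bullet = \bigoplus_{l \ge -N+1} \Cube_l$. Both $\Fscr_A$ and the proposed $\Sscr_A$ respect this grading, so it suffices to prove, for each $n$, that $\Fscr_A$ induces an equivalence
\[
\Cube_{n-N+1}/\Sscr_{A,n-N+1} \cong \Mod(Z_n(A)),
\]
where $\Sscr_{A,n-N+1} \subset \Cube_{n-N+1}$ is the localizing subcategory generated by the simples $S_I^{(n-N+1)}$ with $I$ non-admissible.

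Next, using Corollary \ref{ref-5.2-36}, I would identify $\Mod(Z_n(A)) \cong \Rep(Q_{A,n-N+1})$. Because $Q_{A,n-N+1}$ is obtained from $Q_{n-N+1}$ simply by removing the non-admissible vertices (keeping the induced arrows and relations), there is a natural exact restriction functor $\rho: \Cube_{n-N+1} = \Rep(Q_{n-N+1}) \to \Rep(Q_{A,n-N+1})$. Concretely, writing $B$ for the finite-dimensional path algebra $kQ_{n-N+1}/(\text{rel.})$ and $e \in B$ for the sum of the vertex idempotents at the admissible $I$, one has $eBe \cong kQ_{A,n-N+1}/(\text{rel.})$, and $\rho$ is the idempotent truncation $M \mapsto eM$. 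It is standard that such truncation is a Serre localization: $\ker \rho$ consists of those $M$ with $eM = 0$, i.e.\ those supported on non-admissible vertices, which is exactly the localizing subcategory generated by the $S_I^{(n-N+1)}$ for non-admissible $I$, and $\rho$ induces an equivalence $\Cube_{n-N+1}/\ker\rho \cong \Rep(Q_{A,n-N+1})$.

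What remains is to identify $\Fscr_A$ with $\rho$ under Corollary \ref{ref-5.2-36}. Since both functors are exact, it is enough to compare them on the projective generators $P_I^{(n-N+1)}$ and on the canonical arrows between them. Under the equivalence $\Mod(Z_n(A)) \cong \Rep(Q_{A,n-N+1})$, the module $\Fscr_A(P_I^{(n-N+1)}) = R_I^{(n)}$ corresponds to the representation $J \mapsto \Hom_{Z_n(A)}(R_J^{(n)}, R_I^{(n)})$ for admissible $J$. Using the basis $(w_\alpha)_\alpha$ and the primitive idempotents $e_{\alpha_J}$ from the proof of Proposition \ref{ref-5.1-34}, I would compute $\Hom_{Z_n(A)}(R_J^{(n)}, R_I^{(n)}) = e_{\alpha_J} R_I^{(n)}$, which equals $k$ precisely when $w_{\alpha_J} \in R_I^{(n)}$, i.e.\ when $I \subset J$, and vanishes otherwise. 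This matches $\rho(P_I^{(n-N+1)})$ at every admissible $J$, and the transition maps agree because both sides send $x_{IJ}$ to the canonical generator of the relevant one-dimensional Hom-space.

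The main obstacle is this last Hom computation when $I$ is itself \emph{non}-admissible, since Proposition \ref{ref-5.1-34}(4) only treats the admissible case. Handling this requires returning to the explicit $(w_\alpha)$-basis that distributivity furnishes — but once this is in hand, the remainder of the argument is formal recollement theory for idempotent truncation.
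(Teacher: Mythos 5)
Your proposal follows essentially the same route as the paper: degree by degree, pass to $\Mod(Z_n(A))\cong\Rep(Q_{A,n-N+1})$ via Corollary \ref{ref-5.2-36}, recognize the restriction $\rho=\Res:\Cube_{n-N+1}\to\Rep(Q_{A,n-N+1})$ as a Serre localization whose kernel is generated by the non-admissible simples, and identify $\Fscr_A$ with $\Fscr^\circ_A\circ\Res$. The paper's triangle is exactly yours. The one divergence is how you verify $\Fscr_A=\Fscr^\circ_A\circ\Res$: you propose to compare the two exact functors on the projectives $P_I^{(n-N+1)}$ by computing $\Hom_{Z_n(A)}(R_J^{(n)},R_I^{(n)})$ for admissible $J$ and \emph{all} $I$, and you flag the non-admissible $I$ case as an obstacle. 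It is not: the proof of Proposition \ref{ref-5.1-34} gives $R_I^{(n)}=\bigoplus_{K\supset I,\ K\ \mathrm{adm.}}\tilde C_K^{(n)}$ for \emph{every} $I$ (admissible or not), so $\Hom_{Z_n(A)}(R_J^{(n)},R_I^{(n)})=e_{\alpha_J}R_I^{(n)}$ is $k$ precisely when $J\supset I$ and $0$ otherwise, without any admissibility hypothesis on $I$. The paper sidesteps this computation entirely by bringing in the left adjoint $\Ind$ of $\Res$: one checks $\Fscr^\circ_A=\Fscr_A\circ\Ind$ on the admissible projectives only, and then $\Fscr_A=\Fscr^\circ_A\circ\Res$ follows because $\Ind\Res M\to M$ is surjective with kernel in $\Sscr_A$ and $\Fscr_A$ is exact and kills $\Sscr_A$ (directly: $\Fscr_A(S_I)=C_I^{(n)}=0$ for non-admissible $I$). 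Both verifications work; the paper's is a little slicker, yours is a little more explicit, but they prove the same commuting diagram.
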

\begin{proof}
For conciseness we drop some superscripts in the notations below, when they are clear from the context. We consider the following diagram
\begin{equation}\label{ref-5.2-38}
\xymatrix{ & \Rep(Q_{A,n-N+1})
\ar@<1ex>[dl]^{\Ind} \ar@{->}[dr]^{\Fscr^\circ_{A}}_{\cong}
& \\
\cu_{n-N+1} \ar[rr]_{\Fscr_A} \ar@<1ex>[ur]^\Res&   & \Mod(Z_n(A)) }
\end{equation}
Here $\Res$ is the restriction functor $\cu_n=\Rep(Q_n)\r \Rep(Q_{A.n})$
and $\Ind$ is the left adjoint to $\Res$.

We first claim $\Fscr^\circ_{A}=\Fscr_A\circ \Ind $.  To see this note that all
functors are right exact and commute with direct sums, so it is
sufficient to show that they take the same value on projectives. If
$I$ is admissible then we find $(\Fscr_A\circ \Ind)(P_I)=\Fscr_A(P_I)=R_I$ and we
find the same value for $\Fscr_{A}^\circ(P_I)$.

Now we claim $\Fscr_A=\Fscr_{A}^\circ\circ \Res$.
Let $M\in \Cube_{n-N+1}$. It follows that
\[
(\Fscr^\circ_{A}\circ\Res)(M)=\Fscr_A(\Ind\circ \Res(M))
\]
The canonical map
\[
\Ind\circ \Res(M)\r M
\]
is surjective and has its kernel in $\Sscr_A$. Hence since $\Fscr_A$ is exact
\[
\Fscr_A(\Ind\circ \Res(M))=\Fscr_A(M)
\]
which implies our claim.
Thus diagram \eqref{ref-5.2-38} is commutative in the two possible senses.

Put $\Sscr_{A,n}=\Sscr_A\cap \Cube_n$. Then $\Sscr_{A,n}$ lies in the kernel
of $\Res$ and hence in the kernel of $\Fscr_A$. Furthermore $\Res$
induces an equivalence $\Cube_n/\Sscr_{A,n}\cong \Rep(Q_{A,n})$.
Hence we obtain a commutative
diagram
\begin{equation}\label{ref-5.3-39}
  \xymatrix{ & \Rep(Q_{A,n-N+1})
    \ar@{-->}[dr]^{\Fscr^\circ_{A}}_{\cong}
    & \\
    \cu_{n-N+1}/\Sscr_{A,n-N+1} \ar[rr]_{\bar{\Fscr}_A} \ar@<1ex>[ur]^\Res_{\cong}&   &
\Mod(Z_n(A)) }
\end{equation}
Since $\Fscr^\circ_A$ is an equivalence, the same holds for $\bar{\Fscr}_A$.
\end{proof}
\begin{corollary} \label{ref-5.4-40} Assume that $A$ satisfies the
  extra condition (besides being distributive). Then the monoidal
  category $\CoMod(\fnd(A))$ is entirely determined by the numbers
  $(\dim J_n)_n$. If $N=2$ then $\CoMod(\fnd(A))$ is entirely
  determined by the Hilbert series of $A$.
\end{corollary}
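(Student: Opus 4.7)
The plan is to invoke Proposition~\ref{ref-5.3-37}, which realises $\CoMod(\fnd(A))$ monoidally as the quotient $\Cube_\bullet/\Sscr_A$. Since the monoidal category $\Cube_\bullet$ is defined intrinsically and the monoidal structure descends to the quotient, $\CoMod(\fnd(A))$ is determined the moment one knows which simples $S_I^{(l)}$ survive, i.e.\ which $I$ are admissible. The whole task therefore reduces to reading off the admissible set from the sequence $(\dim J_n)_n$.

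The extra condition does most of the work. Whenever $i,j\in I$ satisfy $j\le i+N-1$ and $i<l<j$, Definition~\ref{ref-3.3.2-7} gives $R_i^{(n)}\cap R_j^{(n)}\subset R_l^{(n)}$, hence $R_I^{(n)}=R_{I\cup\{l\}}^{(n)}$. Iterating yields $R_I^{(n)}=R_{\bar I}^{(n)}$, where $\bar I$ is the closure obtained by filling in every hole of size $\le N-2$. By construction $\bar I$ is hole-free: consecutive elements differ either by $1$ or by at least $N$. Writing $\bar I$ as a disjoint union of intervals $[a_1,b_1],\ldots,[a_r,b_r]$ with $a_{s+1}\ge b_s+N$, the block constraints live on pairwise disjoint position sets inside $V^{\otimes n}$, so the intersection splits as a tensor product
\[
R_{\bar I}^{(n)} \;\cong\; V^{\otimes(a_1-1)}\otimes J_{m_1}\otimes V^{\otimes(a_2-b_1-N)}\otimes J_{m_2}\otimes\cdots\otimes J_{m_r}\otimes V^{\otimes(n-b_r-N+1)},
\]
with $m_s=b_s-a_s+N$. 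Hence $\dim R_I^{(n)}$ is an explicit monomial in $\dim V=\dim J_1$ and the $\dim J_{m_s}$. Möbius inversion gives $\dim C_I^{(n)}=\sum_{J\supseteq I}(-1)^{|J|-|I|}\dim R_J^{(n)}$, so admissibility is determined by $(\dim J_n)_n$; this proves the first part.

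For the second statement, assume $N=2$. The extra condition is vacuous and Theorem~\ref{ref-3.1-22} ensures $A$ is Koszul. A direct dualisation of the defining presentation of $A^!=T(V^*)/(R^\perp)$ identifies $(A^!_n)^\ast$ with $\bigcap_i R_i^{(n)} = J_n$, so $(\dim J_n)_n$ is the coefficient sequence of $h_{A^!}(t)$. The Koszul duality identity $h_A(t)\,h_{A^!}(-t)=1$ (obtained as the Euler-characteristic relation of the Koszul complex~\eqref{ref-3.3-10}) then puts $(\dim J_n)_n$ and $h_A(t)$ in mutual bijection, and the first part finishes the proof.

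The principal technical point, as I see it, is making the closure construction $\bar I$ unambiguous and the resulting tensor-product factorisation of $R_{\bar I}^{(n)}$ fully rigorous; both are essentially combinatorial but require care with indices to ensure the closure is well defined (independent of the order of hole-filling) and that disjoint-support constraints genuinely split as tensor products. Everything else in the argument is formal bookkeeping.
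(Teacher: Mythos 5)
Your argument is correct and follows the paper's proof essentially step for step: reduce to computing admissibility via Proposition~\ref{ref-5.3-37}, use the extra condition to replace $I$ by its small-hole closure $\bar{I}$, split $R_{\bar I}^{(n)}$ into a tensor product over the maximal blocks of $\bar I$ (the paper does this reduction inductively rather than in one closed formula, but the idea is identical), and finally compute $\dim C_I^{(n)}$ as an alternating sum of $\dim R_J^{(n)}$ over $J\supseteq I$, which is exactly the Euler characteristic of the Koszul resolution of $S_I$ that the paper invokes. The only cosmetic differences are your explicit tensor-product formula and your use of $h_A(t)h_{A^!}(-t)=1$ in the $N=2$ case, where the paper just cites the Koszul complex; both are equivalent.
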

\begin{proof}
  If follows from Proposition \ref{ref-5.3-37} that to describe the
  category  $\CoMod(\fnd(A))$  it suffices to
  know which $S^{(l)}_I$ are mapped to zero under $\Fscr_A$ or
  equivalently under~$F_A$.

To this end it is
sufficient to compute the dimension of $F_A S^{(l)}_I$. First we observe
that $S^{(l)}_I$ has a finite projective resolution
\[
\cdots \r \bigoplus_{\begin{smallmatrix} J\supset I\\
|J-I|=2\end{smallmatrix}}P^{(l)}_J \r\bigoplus_{\begin{smallmatrix}J\supset I
\\ |J-I|=1\end{smallmatrix}} P^{(l)}_J\r  S^{(l)}_I\r 0
\]
Hence to know $\dim F_A S^{(l)}_I$ it is sufficient to know $\dim F_A P^{(l)}_I$.

Since  $A$ satisfies the extra condition we have
$\dim  F_A P^{(l)}_I=\dim  F_A P^{(l)}_{\bar{I}}$ where
$\bar{I}$ is obtained from $I$ by filling up all holes of size $\le N-2$. Thus
we may assume that~$I$ has no holes of size $\le N-2$.

If on the other hand $I$ contains a hole of size $\ge N-1$ or else does not
contain $1$ or $l$ then we may write $P^{(l)}_I$ as a tensor product $P^{(l_1)}_{I_1}\otimes P^{(l_2)}_{I_2}$ for suitable $l_1,l_2,I_1,I_2$.  Hence we may reduce
to the case where $I=\{1,\ldots,l\}$. In that case $F_A P^{(l)}_I=J_l$. This
finishes the proof for general $N$.

If $N=2$ then it follows from the Koszul complex that the sets of numbers
$(\dim J_n)_n$ and $(\dim A_n)_n$ determine each other.
\end{proof}
\section{The symmetric algebra}
\label{ref-6-41}
In this section we assume that the ground field $k$ is algebraically closed
of characteristic zero. Below $A$ will be the symmetric algebra $SV=TV/(\wedge^2 V)$ of a vector space $V$ of
dimension $n$. As $SV$ is a quadratic algebra we will have $N=2$ in
this section.

To simplify things conceptually we will equip $V$ with an explicit
basis $(x_i)_i$ as in \S\ref{ref-4-24} and we put
$z^j_i=x^\ast_j\otimes x_i$. Let $\Oscr
=k[(z^j_i)_{i,j}]=S(\End(V)^\ast)$ be the coordinate ring of
$n\times n$-matrices. Then $\Oscr$ is a bialgebra which coacts on
$SV$ with the same formulas as \eqref{ref-4.1-25}\eqref{ref-4.2-26}.
Furthermore sending $z^j_i\mapsto z^j_i$ defines a bialgebra
homomorphism $B:\fnd(A)\r \Oscr$ compatible with the coactions on
$SV$. Hence we obtain a corresponding monoidal exact functor
\[
B_\ast:\CoMod(\fnd(A))\r \CoMod(\Oscr)
\]
which on the underlying vector spaces is the identity.

The bialgebra $\Oscr$ is semi-simple and its irreducible representations
are indexed by partitions of at most $n$ rows. For more details we refer to the excellent
text book \cite{FH}. If $\lambda$ is a partition of $m$ with conjugate partition $\lambda'$
then the associated irreducible $\Oscr$ (co)representation is
\[
S^\lambda V=\im\left(\bigotimes_i \wedge^{\lambda'_i} V\xrightarrow{\alpha}
V^{\otimes n}\xrightarrow{\beta} \bigotimes_j S^{\lambda_j} V\right)
\]
The precise form of the (anti-)symmetrization maps $\alpha$ and $\beta$ is derived from a
labeling by the numbers $1,\ldots,m$ of the Young diagram associated to $\lambda$
(which does not have to yield a standard tableau).
The representation $S^\lambda V$ is independent of this labeling.

For example if  $\lambda=[221]$  then a corresponding
labeled Young diagram is
\setlength{\unitlength}{0.5cm}
\[
\begin{picture}(2,3)(0,0)
\put(0,0){\line(1,0){1}}
\put(0,1){\line(1,0){2}}
\put(0,2){\line(1,0){2}}
\put(0,3){\line(1,0){2}}

\put(0,0){\line(0,1){3}}
\put(1,0){\line(0,1){3}}
\put(2,1){\line(0,1){2}}

\put(0.3,0.3){\makebox{5}}
\put(0.3,1.3){\makebox{3}}
\put(0.3,2.3){\makebox{1}}
\put(1.3,1.3){\makebox{4}}
\put(1.3,2.3){\makebox{2}}
\end{picture}
\]
The corresponding irreducible $\Oscr$-representation $S^\lambda V$ is given by
\[
\im(\wedge^3 V\otimes \wedge^2 V\xrightarrow{\alpha} V^{\otimes 5}\xrightarrow{\beta} S^2V\otimes S^2V\otimes V)
\]
The image of $\alpha$ consists of  the tensors which are anti-symmetric in
the places $(1,3,5)$ and $(2,4)$. Likewise the map $\beta$ symmetrizes a tensor in
the places $(1,2)$, $(3,4)$ (and $5$ but this has no effect of course).

The construction of $S^\lambda V$ works also when $\lambda$ is replaced by a skew diagram $\lambda/\mu$, i.e.\ a
difference of two partitions $\lambda,\mu$.
In that case we have
\[
S^{\lambda/\mu} V=\im\left(\bigotimes_i \wedge^{\lambda'_i-\mu'_i} V\xrightarrow{\alpha}
V^{\otimes n}\xrightarrow{\beta} \bigotimes_j S^{\lambda_j-\mu_j} V\right)
\]
For example if $\lambda/\mu$ is the skew (labeled) diagram
\[
\begin{picture}(2,3)(0,0)
\put(0,0){\line(1,0){1}}
\put(0,1){\line(1,0){2}}
\put(0,2){\line(1,0){2}}
\put(1,3){\line(1,0){1}}

\put(0,0){\line(0,1){2}}
\put(1,0){\line(0,1){3}}
\put(2,1){\line(0,1){2}}

\put(0.3,0.3){\makebox{4}}
\put(0.3,1.3){\makebox{2}}
\put(1.3,1.3){\makebox{3}}
\put(1.3,2.3){\makebox{1}}
\end{picture}
\]
then
\[
S^{\lambda/\mu}V=\im(\wedge^2 V\otimes \wedge^2 V\r V^{\otimes 4}\r V\otimes S^2 V\otimes V)
\]
In contrast to $S^\lambda V$ the representation $S^{\lambda/\mu}V$ is in general not irreducible but its decomposition
can be described in terms of the well-known Littlewood-Richardson coefficients.
\[
S^{\lambda/\mu}V=\sum_\nu (S^\nu )^{\oplus N_{\lambda\mu\nu}}
\]
The Littlewood-Richardson coefficients can be computed by means of a combinatorial recipe which
consists in counting certain admissible labelings of the boxes of~$\lambda/\mu$.

The following is the main observation of this section.
\begin{proposition} \label{ref-6.1-42} Assume
\[
I=\{1,\ldots,a,\widehat{a+1},\ldots,\widehat{a+b},a+b+1,\ldots,a+b+c,\ldots\}
\subset \{1,\ldots,m\}
\]
Then $B_\ast(\Fscr_A(S^{(m)}_I))=S^{\lambda/\mu}V$ where $\lambda/\mu$ is the rim hook
with $m+1$ boxes:
\begin{equation}
\label{ref-6.1-43}
\begin{picture}(6,6)(0,0)
\put(0,0){\line(1,0){3}}
\put(3,0){\line(0,1){3}}
\put(3,3){\line(1,0){3}}
\put(6,3){\line(0,1){3}}
\put(0,1){\line(1,0){2}}
\put(2,1){\line(0,1){3}}
\put(2,4){\line(1,0){3}}
\put(5,4){\line(0,1){2}}
\put(5,6){\line(1,0){1}}
\put(5.3,4.6){\makebox{1}}
\put(3.8,3.25){\makebox{2}}
\put(2.3,1.8){\makebox{3}}
\put(0.5,0.5){\makebox{\dots}}
\end{picture}
\end{equation}
such that the column labeled by ``$1$'' has $a+1$ boxes, the row labeled by ``$2$'' has
$b+1$-boxes, the row labeled by ``$3$'' has
$c+1$-boxes, etc\dots.

This result remains valid if $I$ has instead the form
\[
I=\{\widehat{1},\ldots,\widehat{b},b+1,\ldots,b+c,\ldots\}
\]
In that case we simply think of $a$ as being zero. This means that the
column labeled~$1$ contains only one box and thus is contained in
the row labeled $2$.
\end{proposition}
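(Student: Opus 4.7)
My plan is to compute the underlying $GL(V)$-representation of $B_\ast(\Fscr_A(S^{(m)}_I))$ directly. Since $B_\ast$ is the identity on underlying vector spaces, it suffices to identify $F_A(S^{(m)}_I)=R_I^{(m+1)}/\sum_{l\notin I}R_{I\cup\{l\}}^{(m+1)}$ as a $GL(V)$-module and match it with $S^{\lambda/\mu}V$. The set $I\subset\{1,\ldots,m\}$ induces a decomposition of $\{1,\ldots,m+1\}$ into alternating maximal \emph{antisymmetric blocks} $B_k$ (maximal runs of positions joined by consecutive pairs in $I$) and \emph{symmetric blocks} $C_j$ (runs joined by pairs outside $I$), with adjacent blocks sharing a single corner. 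For the $I$ in the statement the successive block sizes are $a+1,b+1,c+1,d+1,\ldots$, which is precisely the combinatorial data of a rim hook of $m+1$ cells with alternating vertical and horizontal runs of those lengths---namely the shape $\lambda/\mu$ pictured in \eqref{ref-6.1-43}.

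The key identification is that $R_I^{(m+1)}$, being the subspace of $V^{\otimes(m+1)}$ antisymmetric in each $B_k$, is the image of the columnar antisymmetrization
\[
\alpha:\bigotimes_k\wedge^{|B_k|}V\;\otimes\;\bigotimes_{\text{interior cells of horizontal runs}}V\longrightarrow V^{\otimes(m+1)}
\]
used in the Schur--Weyl construction of $S^{\lambda/\mu}V$. (The columns of $\lambda$ that meet the rim hook are the vertical runs, of lengths $|B_k|$, together with singleton columns coming from each interior cell of a horizontal run.) Let
\[
\beta:V^{\otimes(m+1)}\longrightarrow\bigotimes_j S^{|C_j|}V\;\otimes\;\bigotimes_{\text{interior cells of vertical runs}}V
\]
be the analogous row-symmetrization on the $C_j$. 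By the definition of the skew Schur functor, $S^{\lambda/\mu}V=\im(\beta\circ\alpha)=\beta(R_I^{(m+1)})\cong R_I^{(m+1)}/\bigl(R_I^{(m+1)}\cap\ker\beta\bigr)$.

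It remains to match $R_I^{(m+1)}\cap\ker\beta$ with $\sum_{l\notin I}R_{I\cup\{l\}}^{(m+1)}$. The $C_j$ being pairwise disjoint, the normalized symmetrizers $\operatorname{Sym}_{C_j}$ are mutually commuting projections, and the standard identity $\ker(P_1P_2\cdots)=\sum_i\ker P_i$ for commuting projections gives $\ker\beta=\sum_j\ker\operatorname{Sym}_{C_j}=\sum_{l\notin I}R_l^{(m+1)}$; indeed each $\ker\operatorname{Sym}_{C_j}$ is spanned by tensors antisymmetric at some pair $(l,l+1)\subset C_j$. Intersecting with $R_I^{(m+1)}$ and using distributivity of $\Lscr_{m+1}(A)$ (which holds since $SV$ is Koszul and hence distributive by Theorem~\ref{ref-3.1-22}) yields $\sum_{l\notin I}(R_I^{(m+1)}\cap R_l^{(m+1)})=\sum_{l\notin I}R_{I\cup\{l\}}^{(m+1)}$. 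All maps involved are $GL(V)$-equivariant, giving the required isomorphism. The edge case where the first or last block is symmetric (the second form with $a=0$) is handled by the convention that a one-box ``column'' contributes a trivial $\wedge^1V=V$ factor contained in its adjacent row.

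The main obstacle is the bookkeeping that matches the block decomposition of $\{1,\ldots,m+1\}$ induced by $I$ with the column/row structure of the rim hook $\lambda/\mu$; once this dictionary is in place the representation-theoretic identification follows formally from the universal definitions of the skew Schur functor and the simple $S^{(m)}_I$.
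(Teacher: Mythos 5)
Your proof is correct and follows essentially the same route as the paper's: both identify $R_I^{(m+1)}$ with the image of the column-antisymmetrizer $\alpha$ and $V^{\otimes(m+1)}/\sum_{l\notin I}R_l^{(m+1)}$ with the image of the row-symmetrizer $\beta$, so that $F_A(S_I^{(m)})=\im(\beta\circ\alpha)=S^{\lambda/\mu}V$. You fill in a bit more detail than the paper (the commuting-projections lemma for $\ker\beta$ and the explicit block bookkeeping), but the argument is the same.
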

\begin{proof}
It is easy to see that
\begin{align*}
B_\ast(\Fscr_A(S^{(l)}_{\{1,\ldots,l\}}))&=B_\ast (\wedge^l V)=\wedge^l V\\
B_\ast(\Fscr_A(S^{(l)}_{\emptyset}))&=B_\ast (S^l V)=S^l V
\end{align*}
An arbitrary $S^{(m)}_I$ has a presentation
\[
\bigoplus_{i\not\in I} P^{(m)}_{I\cup \{i\}}\r P^{(m)}_I\r S^{(m)}_I\r 0
\]
Applying $\Fscr_A$ and using distributivity we get
\[
\Fscr_AS^{(m)}_I=R^{(m+1)}_{I}\left/\left(R^{(m+1)}_I \cap \sum_{i\not\in I} R_i^{(m+1)}\right) \right.
\]
Alternatively
\[
\Fscr_AS^{(m)}_I=\im \left(R^{(m+1)}_{I}\r V^{\otimes m+1}\r V^{\otimes m+1}\left/\sum_{i\not\in I} R_i^{(m+1)}\right.\right)
\]
We have
\[
R^{(m+1)}_{I}=\wedge^{a+1}V\otimes \wedge^{c+1}\otimes\cdots
\]
\[
V^{\otimes m+1}\left/\sum_{i\not\in I} R_i^{(m+1)}=S^{b+1}V\otimes S^{d+1}V\otimes\cdots\right.
\]
Thus $\Fscr_AS^{(m)}_I$ is $S^{\lambda/\mu}V$ where $\lambda/\mu$ is the diagram \eqref{ref-6.1-43}
numbered with the numbers $1,\ldots,m+1$ starting from the top right and ending at the bottom
left (thus this is not a standard tableau).
\end{proof}
\section{Some examples of distributive algebras}
In this section we describe some algebras which we could show to be distributive.
\subsection{Algebras derived from Koszul algebras}
We have the following easy general result.
\begin{propositions}
\label{ref-7.1.1-44}
Let $A=TV/(R)$ be a quadratic Koszul algebra and let $R'\in \Lscr_N(A)$. Then
 $A'=TV/(R')$ is a distributive algebra. If specifically we take
\begin{equation}
\label{ref-7.1-45}
 R'=\bigcap_{i=1}^{N-1} V^{\otimes i-1}\otimes R\otimes V^{N-i-1}
\end{equation}
 then $A'$ also satisfies the
extra condition and hence is $N$-Koszul.
\end{propositions}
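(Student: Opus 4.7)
The plan is to deduce distributivity of $A'$ from Backelin's theorem (Theorem~\ref{ref-3.1-22}) applied to the Koszul algebra $A$, then verify the extra condition directly for the specific $R'$, and finally invoke the implication \eqref{ref-3.4-11}. Throughout I write $R^{(n),2}_k = V^{\otimes k-1}\otimes R\otimes V^{\otimes n-k-1}$ for the generators of $\Lscr_n(A)$, and $R'^{(n)}_j = V^{\otimes j-1}\otimes R'\otimes V^{\otimes n-N-j+1}$ for the generators of $\Lscr_n(A')$.

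First I would observe that the operation $W\mapsto V^{\otimes a}\otimes W\otimes V^{\otimes b}$ is a lattice homomorphism between sublattices of the ambient tensor spaces: this is because tensoring with a vector space over $k$ is exact and so preserves finite intersections as well as sums. Consequently, because $R'\in \Lscr_N(A)$, each $V^{\otimes a}\otimes R'\otimes V^{\otimes n-a-N}$ lies in $\Lscr_n(A)$, expressed via the same lattice formula that exhibits $R'$ in terms of the $V^{\otimes i-1}\otimes R\otimes V^{\otimes N-i-1}$. Hence $\Lscr_n(A')$ is a sublattice of $\Lscr_n(A)$. By Theorem~\ref{ref-3.1-22} $A$ is distributive, so $\Lscr_n(A)$ is distributive, and any sublattice of a distributive lattice is distributive. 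This yields distributivity of $A'$.

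For the specific choice $R'=\bigcap_{i=1}^{N-1} V^{\otimes i-1}\otimes R\otimes V^{\otimes N-i-1}$, the same exactness of tensoring gives
\[
R'^{(n)}_j = \bigcap_{k=j}^{j+N-2} R^{(n),2}_k.
\]
Now take $1\le i\le l\le j\le n$ with $j\le i+N-1$. Then $\{i,\ldots,i+N-2\}\cup \{j,\ldots,j+N-2\}$ is the single interval $\{i,\ldots,j+N-2\}$ (the two windows overlap or abut), and $\{l,\ldots,l+N-2\}$ is contained in this interval because $i\le l$ and $l\le j$ give $l+N-2\le j+N-2$. Intersecting over a larger set yields a smaller subspace, so
\[
R'^{(n)}_i\cap R'^{(n)}_j = \bigcap_{k=i}^{j+N-2} R^{(n),2}_k \subset \bigcap_{k=l}^{l+N-2} R^{(n),2}_k = R'^{(n)}_l,
\]
which is exactly the extra condition of Definition~\ref{ref-3.3.2-7}.

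Finally, combining the two properties just established with the implication \eqref{ref-3.4-11} (distributivity together with the extra condition implies $N$-Koszul) gives that $A'$ is $N$-Koszul. The only slightly delicate point is the lattice-homomorphism assertion in the first paragraph; everything else is bookkeeping with intersections of tensor factors.
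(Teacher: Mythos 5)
Your proof is correct and follows essentially the same route as the paper's: distributivity of $A'$ is inherited from that of $A$ (you make explicit the sublattice argument the paper treats as immediate), and the extra condition for $R'$ is verified by observing that $R'^{(n)}_i\cap R'^{(n)}_j$ is the intersection of $R^{(n),2}_k$ over $k$ in the interval $\{i,\ldots,j+N-2\}$, which contains $\{l,\ldots,l+N-2\}$. Your interval formulation is a notationally cleaner restatement of the paper's index bookkeeping, but the underlying idea is identical; the appeal to \eqref{ref-3.4-11} to conclude $N$-Koszulity is also as in the paper.
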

\begin{proof} The distributivity of  $A'$ follows immediately from the distributivity
of $A$. To check the extra condition we have to verify for $1\le l\le p$ with $p\le N-1$
\begin{multline}
\label{ref-7.2-46}
\left(\bigcap_{i=1}^{N-1} V^{\otimes i-1}\otimes R\otimes V^{N+(p-1)-i-1}\right)
\cap
\left(\bigcap_{i=1}^{N-1} V^{\otimes (p-1)+i-1}\otimes R\otimes V^{N-i-1}\right)
\\\subset
\bigcap_{i=1}^{N-1} V^{\otimes (l-1)+i-1}\otimes R\otimes V^{N-(l-1)-i-1}
\end{multline}
The lefthand side of this equation is equal to
\[
\bigcap_{j=1}^{p+N-2} V^{\otimes j-1}\otimes R\otimes V^{N+p-2-j}
\]
We claim that each of the terms on the righthand side of
\eqref{ref-7.2-46} appears in this intersection. To prove this we
have to find for any $i=1,\ldots,N-1$ a $j\in \{1,\ldots,p+N-2\}$
such that $j-1=l+i-2$. Thus we have to take $j=l+i-1$. One verifies
that indeed $1\le l+i-1\le p+N-2$.
\end{proof}
\begin{examples} Take $R=\wedge^2 V$ and $R'$ as \eqref{ref-7.1-45}. Then $R'=\wedge^N V$
and hence $A'=TV/(\wedge^N V)$ is the $N$-generalization of the
symmetric algebra introduced in~\cite{Berger}. By Proposition
\ref{ref-7.1.1-44} we see that $A'$ is distributive and satisfies
the extra condition (and hence is $N$-Koszul by \eqref{ref-3.4-11}).
These facts were proved by Berger in \cite{Berger} using
``confluence''. See below.
\end{examples}
\subsection{Confluence}
\label{ref-7.2-47}
For $N>2$ the only tool to establish distributivity of $TV/(R)$ with $R\subset V^{\otimes N}$
seems to be ``confluence'' \cite{Berger,Bergman1}. For the benefit of the reader we give
a quick introduction to this concept.

Let $W$ be a finite dimensional vector space equipped with a fixed
totally ordered basis $X$. If $S\subset W$ is a subvector space then
an element $x$ of $X$ is called non-reduced with respect to $S$ if
$S$ contains an element of the form $x-\sum_{y<x}c_y y$. We denote
the set of such non-reduced monomials by $\NRed(S)$. The following
is easy to see.
\begin{lemmas} \begin{enumerate}
\item $\dim S=|\NRed(S)|$.
\item If $S\subset R$ then $\NRed(S)\subset \NRed(R)$.
\end{enumerate}
\end{lemmas}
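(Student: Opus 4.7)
The plan is to prove both parts by direct arguments about leading terms with respect to the total order on $X$; no homological or heavy combinatorial machinery is needed.

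For part (2), I would simply unwind the definition: if $s = x - \sum_{y<x} c_y y$ witnesses that $x \in \NRed(S)$, then the same element $s$ lies in $R$ because $S \subseteq R$, and so $x \in \NRed(R)$. This is essentially a one-line verification, so I will dispatch it first.

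For part (1), the strategy is to construct an explicit basis of $S$ indexed by $\NRed(S)$. For each $x \in \NRed(S)$ pick (by definition) an element $s_x = x - \sum_{y<x} c^{(x)}_y y \in S$. First I would show linear independence: given a nontrivial relation $\sum_{x} \lambda_x s_x = 0$, look at the largest $x_0 \in \NRed(S)$ with $\lambda_{x_0}\neq 0$; the coefficient of $x_0$ on the left-hand side is $\lambda_{x_0}\neq 0$ since no other $s_x$ involves $x_0$, contradiction. Then I would show spanning: given $s \in S \setminus\{0\}$, let $x_0$ be the largest element of $X$ appearing in $s$ with nonzero coefficient $c$; then $s/c - s_{x_0}$ has strictly smaller leading monomial (or is zero), and belongs to $S$. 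Induction on the position of the leading monomial in the finite totally ordered set $X$ then writes $s$ as a linear combination of the $s_x$. In particular every leading monomial arising this way belongs to $\NRed(S)$, so $\{s_x\}_{x \in \NRed(S)}$ is a basis and $\dim S = |\NRed(S)|$.

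The only step that requires any care is the choice of the elements $s_x$ — they are not canonical, but any choice works, and the proof does not depend on the choice. I do not expect any genuine obstacle; the argument is essentially Gaussian elimination (row-reduction against the reverse order on $X$) recast in invariant language.
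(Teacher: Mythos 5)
Your proof is correct and complete. The paper itself gives no proof of this lemma (it is stated with the preface ``The following is easy to see'' and left to the reader), and your argument is exactly the standard one that is implicitly intended: part~(2) is immediate from the definition, and part~(1) is the observation that the set $\{s_x\}_{x\in\NRed(S)}$ of witnesses forms a basis of $S$, proved by the usual leading-term argument (upper triangularity gives independence, and reduction of the largest monomial with induction on its position in the finite set $X$ gives spanning). Your remark that the choice of each $s_x$ is non-canonical but irrelevant is the right thing to flag; nothing is missing.
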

Thus $\NRed(-)$ is an order preserving map from the lattice of subvector spaces
of $W$ to the lattice of subsets of $X$.

From the fact that $\NRed(-)$ is order preserving we immediately deduce
\begin{align}
\NRed(R\cap S)&\subset \NRed(R)\cap \NRed(S)\label{ref-7.3-48}\\
\NRed(R+ S)&\supset \NRed(R)\cup \NRed(S)\label{ref-7.4-49}
\end{align}
The following result was proved by Berger \cite{Berger2}.
\begin{lemmas}
If one of these inclusions is an equality then so is the other.
\end{lemmas}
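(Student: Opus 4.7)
The plan is to reduce both inclusions to a numerical identity, exploiting the fact that $\NRed$ is dimension-preserving. The dimensions of the two sides of each inclusion in \eqref{ref-7.3-48} and \eqref{ref-7.4-49} can be compared using two classical identities: the Grassmann formula
\[
\dim(R+S)+\dim(R\cap S)=\dim R+\dim S,
\]
and the inclusion-exclusion formula for sets
\[
|\NRed(R)\cup \NRed(S)|+|\NRed(R)\cap \NRed(S)|=|\NRed(R)|+|\NRed(S)|.
\]
Combining these with the basic fact $\dim T=|\NRed(T)|$ for any subspace $T\subset W$ gives the master equality
\[
|\NRed(R+S)|+|\NRed(R\cap S)|=|\NRed(R)\cup \NRed(S)|+|\NRed(R)\cap \NRed(S)|.
\]

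From this identity the lemma is essentially immediate. The inclusions in \eqref{ref-7.3-48} and \eqref{ref-7.4-49} yield
\[
|\NRed(R\cap S)|\le|\NRed(R)\cap \NRed(S)|,\quad |\NRed(R+S)|\ge|\NRed(R)\cup \NRed(S)|,
\]
and the master equality forces the defect in one inequality to equal the defect in the other. Hence if equality holds in one of \eqref{ref-7.3-48}, \eqref{ref-7.4-49}, then cardinalities match in the other, and since that inclusion is between finite sets, it too must be an equality.

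There is no real obstacle here: everything is a matter of bookkeeping once the dimension identity $\dim T=|\NRed(T)|$ is invoked. The only thing that needs a moment of care is to verify that the Grassmann identity translates cleanly into a statement about $\NRed$, which is automatic from $\dim=|\NRed|$. I would not attempt any direct combinatorial manipulation of the sets $\NRed(R), \NRed(S)$ themselves, since the whole point of the statement is that the dimension-count argument sidesteps the need to understand how leading monomials of $R\cap S$ and $R+S$ behave explicitly.
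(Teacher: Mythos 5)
Your proof is correct. The paper does not actually give a proof of this lemma—it is cited to Berger \cite{Berger2} without argument—but your dimension-count, combining $\dim T = |\NRed(T)|$ with the Grassmann formula and inclusion--exclusion to show the defects of the two inclusions \eqref{ref-7.3-48} and \eqref{ref-7.4-49} coincide, is the canonical route and is complete: the master identity forces both non-negative defects to vanish together, and a containment of finite sets of equal cardinality is an equality.
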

\begin{definitions} We say that $R,S\subset W$ are confluent if one (and hence both) of the inclusions
\eqref{ref-7.3-48}\eqref{ref-7.4-49} is an equality.
\end{definitions}
The following is one of the main fact about confluence \cite{Berger2}.
\begin{theorems}
\label{ref-7.2.4-50}
If $\Rscr=\{(R_i)_i\}$ is a collection of pairwise confluent subspaces of $W$ then
$\NRed(-)$ defines a isomorphism between the lattice $\Lscr$ of subspaces generated by $R_i$
and the lattice of subsets of $X$ generated by $\NRed(R_i)$. In particular $\Lscr$ is distributive.
\end{theorems}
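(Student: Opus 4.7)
The plan is to produce a basis $(e_y)_{y\in X}$ of $W$ whose expansion in the original basis $X$ is unipotent lower triangular (each $e_y = y + \sum_{z<y} c_{y,z}\,z$) and which is compatible with $\Lscr$ in the strong sense that every $R\in\Lscr$ equals the linear span of $\{e_y : y\in\NRed(R)\}$. Once such a basis is available, subspaces of $\Lscr$ correspond bijectively to subsets of $X$ via $\NRed$, and finite sums and intersections of subspaces spanned by subsets of a basis correspond tautologically to unions and intersections of the indexing subsets. Hence $\NRed$ becomes a lattice isomorphism from $\Lscr$ onto the sublattice of $2^X$ generated by the $\NRed(R_i)$, and distributivity of $\Lscr$ is inherited from the Boolean lattice $2^X$.

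First I would construct the basis by transfinite induction on the total order of $X$. Having chosen $(e_z)_{z<y}$ so that $R\cap\mathrm{span}\{z:z<y\} = \mathrm{span}\{e_z : z\in\NRed(R),\ z<y\}$ for each $R$ in the sublattice generated so far, I would define $e_y$ as follows. For each generator $R_i$ with $y\in\NRed(R_i)$, part (1) of Lemma 7.2.2 guarantees a vector $v_i\in R_i$ of the form $y+\sum_{z<y}a_{i,z}\,z$, unique modulo $R_i\cap\mathrm{span}\{z:z<y\}$. The goal is to choose $e_y$ lying in every such $R_i$ simultaneously. Pairwise confluence, in its intersection form $\NRed(R_i\cap R_j) = \NRed(R_i)\cap\NRed(R_j)$, produces for any pair $i,j$ with $y\in\NRed(R_i)\cap\NRed(R_j)$ a vector $v_{ij}\in R_i\cap R_j$ of the form $y+(\text{lower-order terms})$; combined with the inductive description of $R_i\cap R_j$ below degree $y$, this lets one adjust the $v_i$ by elements already expressible via $(e_z)_{z<y}$ into a single coherent $e_y$.

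Given the basis, the remaining verifications are largely bookkeeping. The identity $R = \mathrm{span}\{e_y : y\in\NRed(R)\}$ holds by construction for generators and for intersections produced at each inductive stage, and is automatically stable under taking sums or intersections of subspaces spanned by basis subsets, so it extends to every $R\in\Lscr$. This identity makes $\NRed$ into a lattice homomorphism onto the sublattice of $2^X$ generated by the $\NRed(R_i)$; the dimension equality $\dim R = |\NRed(R)|$ of Lemma 7.2.2 supplies injectivity; surjectivity onto that generated sublattice is immediate. Distributivity of $\Lscr$ then transfers from distributivity of $2^X$.

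The main obstacle is the inductive step defining $e_y$, where one must glue the local constraints imposed by each generator $R_i$ (with $y\in\NRed(R_i)$) into a single vector. Pairwise confluence controls the pairwise compatibility of the $v_i$ modulo lower-order terms, but assembling a common $e_y$ that lies in all relevant $R_i$ at once requires a careful coherent choice, using the inductive hypothesis that the already-constructed $(e_z)_{z<y}$ are compatible with every pairwise intersection $R_i\cap R_j$. Once this synchronization step is carried out, the rest of the argument is formal manipulation in the Boolean lattice $2^X$.
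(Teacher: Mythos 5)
Your strategy — construct a ``lower unipotent'' basis $(e_y)_{y\in X}$ of $W$ simultaneously adapted to every $R_i$, then transport the lattice structure to subsets of $X$ — is the right one, and it is essentially how Berger argues (the paper itself cites the result without proof). The bookkeeping you do at the end is correct once such a basis exists: spans of subsets of a fixed basis turn sums and intersections into unions and intersections, $\dim R = |\NRed(R)|$ gives injectivity, and distributivity is inherited from $2^X$.

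But the ``synchronization step'' you explicitly set aside is the entire content of the theorem, and dismissing what remains as ``formal manipulation in $2^X$'' misstates where the work lies. Pairwise confluence hands you, for each pair $i,j$ with $y\in\NRed(R_i)\cap\NRed(R_j)$, some $v_{ij}\in R_i\cap R_j$ with leading term $y$; producing a single $e_y\in\bigcap_{i\in S}R_i$ for the whole set $S=\{i:y\in\NRed(R_i)\}$ needs an inner induction on $|S|$, and the key move in that induction is precisely an appeal to Boolean distributivity together with the outer hypothesis — not a vague ``coherent choice.'' Concretely: write $S=S'\cup\{k\}$, take $v'\in\bigcap_{j\in S'}R_j$ and $v_k\in R_k$ with leading term $y$ (the first by inner induction). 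For each $j\in S'$ the pair vector $w_{jk}\in R_j\cap R_k$ with leading term $y$ exhibits $v'-v_k=(v'-w_{jk})-(v_k-w_{jk})\in(R_j\cap W_{<y})+(R_k\cap W_{<y})$, where $W_{<y}=\mathrm{span}\{z:z<y\}$. By the outer hypothesis each summand is a span of basis vectors $e_z$ indexed by $\NRed(R_j)$ resp.\ $\NRed(R_k)$ cut below $y$; intersecting over $j\in S'$ and using $\bigcap_{j\in S'}\bigl(\NRed(R_j)\cup\NRed(R_k)\bigr)=\bigl(\bigcap_{j\in S'}\NRed(R_j)\bigr)\cup\NRed(R_k)$ — this is the distributivity of $2^X$ you need, and it is used \emph{here}, inside the construction, not merely at the end — gives $v'-v_k\in\bigl(\bigcap_{j\in S'}R_j\cap W_{<y}\bigr)+\bigl(R_k\cap W_{<y}\bigr)$. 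Writing $v'-v_k=a-b$ accordingly, $e_y:=v'-a=v_k-b$ lies in $\bigcap_{i\in S}R_i$ and has leading term $y$. Without this inner induction you have a plan rather than a proof; with it, your approach closes up correctly.
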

A typical application of this concept is the following. Let $A=TV/(R)$ with $R\subset V^{\otimes N}$. Let $X$ be a totally ordered basis for $V$. We equip $V^{\otimes n}$
with the basis $X^{\times n}$, ordered lexicographically. Then we say that $A$
is confluent (with respect to $X$) if for all $n$ and all $i$ the subspaces
$R^{(n)}_i$ of $V^{\otimes n}$ are pairwise confluent. Using Theorem \ref{ref-7.2.4-50}
we get
\[
\text{confluent}\Rightarrow \text{distributive}
\]
The following was observed
by Berger \cite{Berger2}.
\begin{propositions}\label{ref-7.2.4-50bis} In order for $A$ to be confluent with respect to $X$ it is necessary
and sufficient that $R\otimes  V^{\otimes i}$ and $V^{\otimes
i}\otimes R$ are confluent inside $V^{\otimes (i+N)}$ for
$i=1,\ldots,N-1$.
\end{propositions}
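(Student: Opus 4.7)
The forward direction is immediate: for each $i=1,\dots,N-1$ the pair $R\otimes V^{\otimes i}$, $V^{\otimes i}\otimes R$ inside $V^{\otimes(N+i)}$ is literally the pair $R^{(N+i)}_1$, $R^{(N+i)}_{i+1}$, so if $A$ is confluent these pairs are as well. The content is the converse, for which my plan is to show that for every $n$ and every $1\le i<j\le n-N+1$ the subspaces $R^{(n)}_i$ and $R^{(n)}_j$ of $V^{\otimes n}$ are confluent, and then invoke Theorem \ref{ref-7.2.4-50}.

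The key preparatory step is a product formula for $\NRed$ under tensoring: if $S\subset V^{\otimes m}$ and $T\subset V^{\otimes p}$, then with the lexicographic order from the left on $V^{\otimes(m+p)}$ one has $\NRed(S\otimes T)=\NRed(S)\times\NRed(T)$. I plan to prove this by taking the echelon bases $\sigma_w=x_w-\sum_{y<w,\,y\notin\NRed(S)}c^w_y x_y$ of $S$ (indexed by $w\in\NRed(S)$) and analogously $\tau_u$ of $T$, and checking that the leading lex monomial of $\sigma_w\otimes\tau_u$ is $x_w\otimes x_u$: every other monomial appearing in $\sigma_w\otimes\tau_u$ has either strictly smaller first $m$ coordinates, or equal first $m$ coordinates and strictly smaller last $p$ coordinates. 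This gives $\NRed(S)\times\NRed(T)\subseteq\NRed(S\otimes T)$, and matching cardinalities via $|\NRed(S)|=\dim S$ forces equality. As an immediate corollary one obtains the sandwich identity $\NRed(V^{\otimes a}\otimes S\otimes V^{\otimes b})=X^{\times a}\times\NRed(S)\times X^{\times b}$.

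With the lemma in hand I split into cases on $k=j-i$. When $k\ge N$ the two copies of $R$ occupy disjoint tensor slots, so the exactness of the tensor product gives
\[
R^{(n)}_i\cap R^{(n)}_j=V^{\otimes i-1}\otimes R\otimes V^{\otimes k-N}\otimes R\otimes V^{\otimes n-j-N+1},
\]
and the product formula makes both $\NRed(R^{(n)}_i\cap R^{(n)}_j)$ and $\NRed(R^{(n)}_i)\cap\NRed(R^{(n)}_j)$ manifestly equal to $X^{\times i-1}\times\NRed(R)\times X^{\times k-N}\times\NRed(R)\times X^{\times n-j-N+1}$. When $1\le k\le N-1$ I factor out the common outer tensor factors: setting $U=R\otimes V^{\otimes k}$ and $W=V^{\otimes k}\otimes R$ inside $V^{\otimes N+k}$, which are confluent by hypothesis, I have $R^{(n)}_i=V^{\otimes i-1}\otimes U\otimes V^{\otimes n-j-N+1}$ and $R^{(n)}_j=V^{\otimes i-1}\otimes W\otimes V^{\otimes n-j-N+1}$; tensor exactness reduces the intersection to $V^{\otimes i-1}\otimes(U\cap W)\otimes V^{\otimes n-j-N+1}$, and the sandwich identity transports the confluence of $(U,W)$ up to the confluence of $(R^{(n)}_i,R^{(n)}_j)$.

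The main technical obstacle is the product formula for $\NRed$: this is the only place where the lex order enters essentially, and verifying that $x_w\otimes x_u$ is truly the leading monomial of $\sigma_w\otimes\tau_u$ requires some careful bookkeeping. Once this is in hand, the two cases above are routine exactness arguments and the conclusion follows from Theorem \ref{ref-7.2.4-50}.
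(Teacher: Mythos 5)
The paper gives no proof of this proposition; it simply attributes the statement to Berger \cite{Berger2}, so there is no in-paper argument to compare against. Your argument is correct and self-contained, and is nicely organized around the product formula $\NRed(S\otimes T)=\NRed(S)\times\NRed(T)$. That lemma is proved correctly: the echelon bases $\sigma_w$, $\tau_u$ have the stated form, left-lex order makes $x_w\otimes x_u$ the leading monomial of $\sigma_w\otimes\tau_u$, and the dimension count $|\NRed(S\otimes T)|=\dim S\cdot\dim T=|\NRed(S)\times\NRed(T)|$ upgrades the inclusion to an equality; the sandwich identity $\NRed(V^{\otimes a}\otimes S\otimes V^{\otimes b})=X^{\times a}\times\NRed(S)\times X^{\times b}$ follows by iteration. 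The case split on $k=j-i$ is clean and exhaustive: for $1\le k\le N-1$ the sandwich identity transports the hypothesised confluence of $(R\otimes V^{\otimes k},V^{\otimes k}\otimes R)$ to $(R^{(n)}_i,R^{(n)}_j)$, and for $k\ge N$ the relation slots are disjoint so both $\NRed(R^{(n)}_i\cap R^{(n)}_j)$ and $\NRed(R^{(n)}_i)\cap\NRed(R^{(n)}_j)$ compute directly to the same product set. The forward direction is indeed the tautology you state. One small inaccuracy in your wrap-up: once you have pairwise confluence of $R^{(n)}_i$ and $R^{(n)}_j$ for all $n$ and all $i<j$, you are already done --- that \emph{is} the definition of $A$ being confluent with respect to $X$, given in the paragraph immediately preceding the proposition. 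Theorem~\ref{ref-7.2.4-50} is what one would invoke \emph{afterwards} to pass from confluence to distributivity; it is not needed to conclude confluence.
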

\subsection{The (deformed) Yang-Mills algebras}
In this section we assume that $k$ has characteristic zero.  The
Yang-Mills algebras and their deformations were introduced by Connes
and Dubois-Violette in \cite{CDV1,CDV2}. After a short reminder of how
they are constructed we will show that they are distributive.

The $i$'th cyclic derivative  on $F=k\langle x_1,\ldots,x_n\rangle$
is a $k$-linear map
\[
\frac{{}^\circ\partial}{\partial x_i}:F/[F,F]\r F
\]
which on monomials is defined by
\[
\frac{{}^\circ\partial m}{\partial x_i}=\sum_{m=ux_iv}vu
\]
Assume that $V$ is an $n$-dimensional vector space equipped with a basis $(x_i)_i$. To
avoid trivialities we assume $n\ge 2$.
If $w\in TV/[TV,TV]$ is homogeneous then the Jacobian algebra associated to $w$
is the algebra $A=TV/(R)$ where $R$ is the vectorspace spanned by the cyclic derivatives
of $w$.  It is easy to see that $R$ and hence $A$ does not depend on the choice of the basis $(x_i)_i$.

One has
\[
\bar{w}\overset{\text{def}}{=}\sum_i x_i\frac{{}^\circ\partial w}{\partial x_i}=\sum_i
\frac{{}^\circ\partial w}{\partial x_i}x_i\in R\otimes V\cap V\otimes
R
\]
The map $w\mapsto \bar{w}$ gives an isomorphism between $(F/[F,F])_n$ and
the cyclically invariant elements of $F_n=V^{\otimes n}$.

\medskip

Assume now that $V$ is equipped with a non-degenerate symmetric
bilinear form $(-,-)$ and put $g_{ij}=(x_i,x_j)$. The inverse of the
matrix $(g_{ij})_{ij}$ is denoted by $(g^{ij})_{ij}$. Let $O(V)$ be
the corresponding orthogonal group. Then one has the following
result
\begin{lemmas} The space $(TV/[TV,TV])_4^{O(V)}$ is two-dimensional and is spanned by the elements
\[
w_1=\sum_{i,j,p,q} g^{ip}g^{jq}[x_i,x_j][x_p,x_q]
\]
\[
w_2=\biggl(\sum_{i,j} g^{ij} x_ix_j\biggr)^2
\]
\end{lemmas}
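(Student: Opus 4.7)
The plan is to identify the cyclic structure of $(TV/[TV,TV])_4$ and combine it with the classical description of $O(V)$-invariants in $V^{\otimes 4}$.

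First, $[TV,TV]_4$ equals $\sum_{i=1}^{3}(1-\sigma^i)V^{\otimes 4}$, where $\sigma$ denotes the cyclic shift on $V^{\otimes 4}$; this identifies $(TV/[TV,TV])_4$ with the $C_4$-coinvariants $(V^{\otimes 4})_{C_4}$. In characteristic zero the averaging operator $\tfrac{1}{4}\sum_{i=0}^{3}\sigma^i$ furnishes a canonical isomorphism $(V^{\otimes 4})_{C_4}\cong (V^{\otimes 4})^{C_4}$, and since $O(V)$ is reductive and commutes with $\sigma$, taking $O(V)$-invariants commutes with passage to $C_4$-coinvariants. Thus the problem reduces to computing the $\sigma$-invariants inside $(V^{\otimes 4})^{O(V)}$.

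Next, I would invoke the first fundamental theorem of invariant theory for the orthogonal group (valid for $\dim V\geq 2$ in characteristic zero): $(V^{\otimes 4})^{O(V)}$ is three-dimensional, with basis
\[
t_{12,34},\qquad t_{13,24},\qquad t_{14,23},
\]
where $t_{ab,cd}=\sum_{i_1,\ldots,i_4} g^{i_a i_b}g^{i_c i_d}\, x_{i_1}\otimes x_{i_2}\otimes x_{i_3}\otimes x_{i_4}$. A direct calculation with indices shows that $\sigma$ swaps $t_{12,34}$ and $t_{14,23}$ while fixing $t_{13,24}$, so $\bigl((V^{\otimes 4})^{O(V)}\bigr)^{C_4}$ is two-dimensional, spanned by $t_{13,24}$ and $t_{12,34}+t_{14,23}$.

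Finally, I would match $w_1$ and $w_2$ to this space. One has $w_2=t_{12,34}$ directly, and expanding the commutators in $w_1=\sum g^{ip}g^{jq}[x_i,x_j][x_p,x_q]$ yields the four-term identity $w_1=2t_{13,24}-2t_{14,23}$ in $V^{\otimes 4}$. Under the averaging isomorphism the classes $\bar w_2$ and $\bar w_1$ correspond respectively to $\tfrac12(t_{12,34}+t_{14,23})$ and $2t_{13,24}-(t_{12,34}+t_{14,23})$, which are manifestly linearly independent and hence span. The main technical step is the exchange of $O(V)$-invariants with $C_4$-coinvariants, which is standard given reductivity of $O(V)$ in characteristic zero; the only real combinatorial work is the four-term expansion and index bookkeeping for $w_1$.
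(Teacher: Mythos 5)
Your proof is correct and rests on the same two pillars as the paper's: the first fundamental theorem of invariant theory for $O(V)$ (giving the three contractions spanning $(V^{\otimes 4})^{O(V)}$) and linear reductivity of $O(V)$ to pass to the quotient. The paper works in an orthonormal basis with $s_1=\sum x_ix_ix_jx_j$, $s_2=\sum x_ix_jx_ix_j$, $s_3=\sum x_ix_jx_jx_i$ (which are precisely your $t_{12,34},t_{13,24},t_{14,23}$), notes that the map $(V^{\otimes 4})^{O(V)}\to(TV/[TV,TV])_4^{O(V)}$ is onto, and then asserts that $\bar s_1=\bar s_3$ while $\bar s_1,\bar s_2$ stay independent. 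Your reformulation -- identifying $(TV/[TV,TV])_4$ with $C_4$-coinvariants, using the averaging operator to pass to $C_4$-invariants, and then reading off a $2$-dimensional fixed space from the explicit permutation action of $\sigma$ on $\{t_{12,34},t_{13,24},t_{14,23}\}$ -- is a slightly more systematic way to carry out the same computation: in particular it replaces the paper's ``it is clear that $s_1$ and $s_2$ remain independent'' by a transparent dimension count inside $(V^{\otimes 4})^{O(V)}$. The index bookkeeping for $w_1=2t_{13,24}-2t_{14,23}$ and $w_2=t_{12,34}$ matches the paper's $w_1=2(s_2-s_3)$, $w_2=s_1$, so the identification of the spanning set is the same.
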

\begin{proof} To prove this we may assume that $k=\bar{k}$ and that
  $(x_i)_i$ is an orthogonal basis for $V$. In other words
  $g_{ij}=\delta_{ij}$.  We first consider $(V^{\otimes
    4})^{O(V)}$. The first fundamental theorem of invariant theory for
  the orthogonal groups implies that $(V^{\otimes 4})^{O(V)}$ is
  spanned by
\begin{align*}
s_1&=\sum_{i,j}x_ix_ix_jx_j\\
s_2&=\sum_{i,j}x_ix_jx_ix_j\\
s_3&=\sum_{i,j}x_ix_jx_jx_i
\end{align*}
Since $n\ge 2$ it is clear that $s_1,s_2,s_3$ are linearly independent. Hence they
form a basis for $(V^{\otimes 4})^{O(V)}$.

Since $O(V)$ is reductive the quotient map $(V^{\otimes 4})^{O(V)}\r (TV/[TV,TV])_4^{O(V)}$ is surjective
and hence $(TV/[TV,TV])_4^{O(V)}$ is spanned by $s_1,s_2,s_3$. It is clear that
$s_1$ and $s_3$ are equal modulo commutators but $s_1$ and $s_2$ remain independent.  We
now use the fact that $w_2=s_1$ and $w_1=2(s_2-s_3)$.
\end{proof}
By definition the deformed Yang-Mills algebra $A_\lambda$ ($\lambda\in
k$) is the Jacobian algebra for the potential $w_1+\lambda w_2$. The equations
of $A_\lambda$ have the form
\[
\sum_{j,p} g^{jp}\bigl([x_j,[x_i,x_p]]+\lambda \{ x_i,x_jx_p\}\bigr)
\]
where $\{a,b\}=ab+ba$. The ordinary Yang-Mills algebra is $A_0$.

\begin{theorems} The deformed Yang-Mills algebra $A_\lambda$ is distributive
and satisfies the extra condition.
\end{theorems}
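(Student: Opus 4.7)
The plan is to show that $A_\lambda$ is \emph{confluent} with respect to a suitable ordered basis of $V$, deduce distributivity from Theorem~\ref{ref-7.2.4-50}, and then derive the extra condition as a combinatorial consequence. Since the form $(-,-)$ is non-degenerate symmetric over a field of characteristic zero, after a harmless extension of scalars I may choose an orthogonal basis $x_1 < x_2 < \cdots < x_n$ of $V$, so that $g^{ij} = \delta^{ij}$ and the defining relations become the explicit cubic tensors
\[
r_i = \sum_j \bigl( 2 x_j x_i x_j + (\lambda - 1)(x_i x_j^2 + x_j^2 x_i)\bigr).
\]
Equip each $V^{\otimes m}$ with the lexicographic order inherited from $X = \{x_1,\dots,x_n\}$. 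Since $N=3$, Proposition~\ref{ref-7.2.4-50bis} reduces the confluence of $A_\lambda$ to the confluence of the two pairs $(R\otimes V,\ V\otimes R)\subset V^{\otimes 4}$ and $(R\otimes V^{\otimes 2},\ V^{\otimes 2}\otimes R)\subset V^{\otimes 5}$.

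For each of these pairs one identifies the leading monomials of $r_1,\ldots,r_n$ and computes the sets $\NRed(R\otimes V^k)$ and $\NRed(V^k\otimes R)$ from them. The confluence condition is the equality $\NRed(U)\cap \NRed(W) = \NRed(U\cap W)$, which is the standard Bergman/diamond check: all overlap ambiguities between the leading monomials of the $r_i$ on $4$ (respectively $5$) letters must reduce to the same normal form when one reduces first from the left and first from the right. This is a finite calculation whose size is controlled by the fact that each $r_i$ involves only two indices $i,j$ and by the $O(V)$-equivariance of $R$. Once both pairs are proved confluent, Theorem~\ref{ref-7.2.4-50} applied to $\{R^{(n)}_i\}_i \subset V^{\otimes n}$ yields at once that each lattice $\Lscr_n(A_\lambda)$ is distributive, i.e.\ that $A_\lambda$ is distributive.

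It remains to establish the extra condition. Since $N=3$, by homogeneity this reduces to the single inclusion
\[
R\otimes V\otimes V\ \cap\ V\otimes V\otimes R\ \subset\ V\otimes R\otimes V
\]
inside $V^{\otimes 5}$. Having confluence in hand, Theorem~\ref{ref-7.2.4-50} tells us that $\NRed$ is an isomorphism from $\Lscr_5(A_\lambda)$ onto a lattice of subsets of $X^{\times 5}$, so the desired inclusion is equivalent to the purely combinatorial statement $\NRed(R\otimes V^{\otimes 2})\cap \NRed(V^{\otimes 2}\otimes R)\subset \NRed(V\otimes R\otimes V)$, which follows by inspection of the leading monomials produced in the previous step. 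The principal obstacle lies in the explicit diamond verification on five letters: the relations mix the parameter $\lambda$ nontrivially, and the pure Yang-Mills case $\lambda = 0$ is the most delicate, since the $j = n$ summand in $r_n$ degenerates and forces a different choice of leading monomial for $r_n$ than for the other $r_i$; the case analysis required to handle this uniformly is the main computational cost of the proof.
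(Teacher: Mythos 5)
Your plan shares the paper's general strategy (confluence implies distributivity via Theorem~\ref{ref-7.2.4-50}, with the check reduced to four and five letters via Proposition~\ref{ref-7.2.4-50bis}), but the orthogonal basis is not a harmless normalization --- it destroys confluence. With $g^{ij}=\delta^{ij}$ and the lexicographic order, the relations $r_i=\sum_j\bigl(2x_jx_ix_j+(\lambda-1)(x_ix_j^2+x_j^2x_i)\bigr)$ have leading monomials $x_n^2x_i$ for $i<n$ and $x_n^3$ for $i=n$ when $\lambda\ne 0,1$, so $\NRed(R)=\{x_n^2x_i:1\le i\le n\}$. Consequently $\NRed(R\otimes V)\cap\NRed(V\otimes R)=\{x_n^3x_j:1\le j\le n\}$ has $n$ elements, whereas $R\otimes V\cap V\otimes R=J_4$ is one-dimensional (the deformed Yang--Mills algebras are $3$-Koszul of global dimension three \cite{CDV1,CDV2}, and $J_4$ is the top term of the length-three Koszul resolution). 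For $n\ge 2$ the inclusion \eqref{ref-7.3-48} is therefore strict and the pair $R\otimes V$, $V\otimes R$ is \emph{not} confluent inside $V^{\otimes 4}$. The ``finite calculation'' you defer is thus not a routine diamond check that would succeed --- it genuinely fails --- and with it your distributivity argument and your derivation of the extra condition.

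What the paper does instead is diagonalize the form to the anti-diagonal shape $g^{ij}=1$ iff $i+j=n$ (after extending scalars). In that basis the leading monomials become $x_nx_ix_1$, so $\NRed(R)=\{x_nx_ix_1:1\le i\le n\}$, and the relevant intersections collapse: $\NRed(R_1^{(4)})\cap\NRed(R_2^{(4)})=\{x_nx_nx_1x_1\}$ has a single element, matched by $\bar w_\lambda\in J_4\setminus\{0\}$, while $\NRed(R_1^{(5)})\cap\NRed(R_3^{(5)})=\emptyset$, which disposes of the remaining confluence check and the extra condition simultaneously with no computation at all. The choice of basis is therefore the entire substance of the proof, and the degeneracy you flag at $\lambda=0$ is a symptom of working in the wrong one rather than an isolable edge case.
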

\begin{proof}
We may clearly assume that the ground field is algebraically closed. Then we
may assume that $g^{ij}$ has the following form
\[
g^{ij}=
\begin{cases}
1&\text{if $i+j=n$}\\
0&\text{otherwise}
\end{cases}
\]
Put $\bar{\imath}=n-i$.  The equations of $A_\lambda$ now have
the form
\[
\sum_{j} [x_j,[x_i,x_{\bar{\jmath}}]]+\lambda \{ x_i,x_jx_{\bar{\jmath}}\}
\]
Thus we get
\begin{align*}
\NRed(R)=\{x_nx_{i}x_1 \mid  1\le i \le n \}
\end{align*}

We claim that $A_\lambda$ is confluent. By Proposition
\ref{ref-7.2.4-50bis} 
we must check the equalities
\[
\NRed(R_1^{(n)}\cap R^{(n)}_{n-2})=\NRed(R_1^{(n)})\cap \NRed(R^{(n)}_{n-2})
\]
for $n=4,5$. We already know that the sought equalities are inclusions
of the type ``$\subset$''. Hence it is sufficent to prove
\begin{equation}
\label{ref-7.5-51}
\dim R_1^{(n)}\cap R^{(n)}_{n-2}\ge |\NRed(R_1^{(n)})\cap \NRed(R^{(n)}_{n-2})|
\end{equation}
Since $\NRed(R_1^{(5)})\cap \NRed(R^{(5)}_{3})=\emptyset$ there is nothing
to prove for $n=5$.

On the other hand $\NRed(R_1^{(4)})\cap
\NRed(R^{(4)}_{2})=\{x_nx_nx_1x_1\}$ and $\bar{w}_\lambda\in
R_1^{(4)}\cap R^{(4)}_{2}$. Thus $\dim R_1^{(4)}\cap R^{(4)}_{2}\ge
1$ and so \eqref{ref-7.5-51} holds for $n=4$ as well.

To check the extra condition, using Theorem \ref{ref-7.2.4-50} me must verify
\[
\NRed(R^{(5)}_1)\cap \NRed(R^{(5)}_3)\subset \NRed(R^{(5)}_2)
\]
As the lefthand side is empty there is nothing to prove.
\end{proof}
Using \eqref{ref-3.4-11} we recover the following result proved in \cite{CDV1,CDV2}.
\begin{corollarys} The deformed Yang-Mills algebra $A_\lambda$ is $3$-Koszul.
\end{corollarys}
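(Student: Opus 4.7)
The plan is to observe that this corollary is an immediate consequence of the preceding theorem together with the general implication \eqref{ref-3.4-11}. First I would note that the defining relations of the deformed Yang-Mills algebra $A_\lambda$, being obtained as cyclic derivatives of a degree-$4$ potential, lie in $V^{\otimes 3}$, so we are in the case $N=3$. The preceding theorem establishes that $A_\lambda$ is distributive and satisfies the extra condition, which are precisely the two hypotheses needed on the left-hand side of \eqref{ref-3.4-11}.

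Applying the implication
\[
\text{Distributivity}+\text{extra condition}\Rightarrow \text{$N$-Koszul}
\]
from \eqref{ref-3.4-11} with $N=3$ then directly yields that $A_\lambda$ is $3$-Koszul, which is the desired conclusion. There is no genuine obstacle here since both ingredients have already been provided: the equivalence of distributivity with confluence via Theorem \ref{ref-7.2.4-50}, and the verification of the extra condition, were both carried out in the proof of the previous theorem by explicit computation of $\NRed$ in degrees $4$ and $5$.

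In short, the proof reduces to citing the previous theorem and invoking Berger's implication \eqref{ref-3.4-11}, with the only thing to remark being that $N=3$ because the Jacobian relations of a quartic potential are cubic. No further calculation is required.
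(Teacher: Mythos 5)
Your proposal matches the paper's intent exactly: the corollary is stated immediately after the theorem establishing distributivity and the extra condition for $A_\lambda$, with the paper simply invoking the implication \eqref{ref-3.4-11} ("Distributivity $+$ extra condition $\Rightarrow$ $N$-Koszul"). Your remark that $N=3$ because the Jacobian relations of a quartic potential are cubic is correct and is implicitly assumed in the paper.
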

\subsection{Three dimensional cubic Artin-Schelter  regular algebras}
Recall \cite{AS} that an AS-regular algebra is a graded $k$-algebra
$A=k{+}A_1{+}A_2{+}\cdots$ satisfying the following conditions
\begin{enumerate}
\item $\dim A_{i}$ is bounded by a polynomial.
\item The projective dimension  of $k$ is finite.
\item There is exactly one $i$ for which  $\Ext^i_A(k,A)$ is non-vanishing
and for this $i$ we have $\dim \Ext^i_A(k,A)=1$.
\end{enumerate}
Three dimensional regular algebras generated in degree one were classified
in \cite{AS,ATV1,ATV2} and in general in \cite{Steph1,Steph2}. It was
discovered that they are intimately connected to plane elliptic curves.

There are two possibilities for a three dimensional regular
algebra $A$ generated in degree one.
\begin{enumerate}
\item $A$ is defined by three generators satisfying three quadratic relations
(the ``quadratic case'').
\item $A$ is defined by two generators satisfying two cubic relations
(``the cubic case'').
\end{enumerate}
In \cite{AS} it is shown that all 3-dimensional regular algebras
are obtained by specialization from a number ``generic'' regular algebras.
These generic regular algebras depend on at most two parameters.

Quadratic three dimensional AS-regular algebras are Koszul and hence
distributive.  Cubic three dimensional AS-regular algebras are
$3$-Koszul so one would expect it should be easy to verify distributivity
for them. Nonetheless we have not found a clean way to establish this.

Below we discuss so-called Type A algebras by using a generalization
of the notion of $I$-type introduced in \cite{TVdB}. It is likely that
the other types can be handled in a similar way but we have not
carried out the required verifications.

Type A algebras are associated to a
triple $(E,\sigma,\Lscr)$ where $E$ is a smooth elliptic curve, $\sigma$ is a translation
and $\Lscr$ is a line bundle of degree two on $E$. Put $V=H^0(E,\Lscr)$ and
\[
R=\ker(H^0(E,\Lscr)^{\otimes 3}\xrightarrow{a\otimes b\otimes c\mapsto
  ab^\sigma c^{\sigma^2}} H^0(E,\Lscr\otimes \Lscr^\sigma\otimes
  \Lscr^{\sigma^2}))
\]
(where $(-)^\sigma=\sigma^\ast$).  Then the cubic three-dimensional
AS-regular algebra associated to $(E,\sigma,\Lscr)$ is given by
$TV/(R)$.

We prove
\begin{propositions} \label{ref-7.4.1-52}
Let $A$ be a cubic Type A regular algebra. Then $A$ is distributive.
\end{propositions}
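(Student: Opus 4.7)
The plan is to extend the $I$-type framework of \cite{TVdB} to accommodate the geometric origin of the relations in a Type A algebra, and to deduce distributivity by producing a basis of $V^{\otimes n}$ simultaneously compatible with all the subspaces $R_I^{(n)}$. Confluence in the sense of \S\ref{ref-7.2-47} is not directly available here (as flagged by the authors just before the statement), so the distributivity has to be extracted from the geometry of $(E,\sigma,\Lscr)$ rather than from a rewriting system on monomials in a chosen basis of $V$.

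First I would unpack the geometric meaning of the relations. The multiplication map $a\otimes b\otimes c\mapsto a\cdot b^{\sigma}\cdot c^{\sigma^{2}}$ extends inductively to canonical ``total evaluation'' maps
\[
\mu_n \colon V^{\otimes n}\longrightarrow H^{0}\!\left(E,\Lscr\otimes \Lscr^{\sigma}\otimes\cdots\otimes \Lscr^{\sigma^{n-1}}\right),
\]
and each $R_i^{(n)}$ is the kernel of the variant of $\mu_n$ in which only the three factors at positions $i,i+1,i+2$ are multiplied while the rest are left alone. Thus $R_I^{(n)}=\bigcap_{i\in I} R_i^{(n)}$ is an intersection of kernels of such shifted evaluation maps on $E$, and its dimension is controlled by Riemann--Roch applied to the positive-degree line bundle $\Lscr\otimes \Lscr^{\sigma}\otimes\cdots\otimes \Lscr^{\sigma^{n-1}}$ of degree $2n$.

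The second step is the heart of the argument: generalize the combinatorial $I$-type condition of \cite{TVdB}, which provides a bijective rewriting rule $V\otimes V \to V\otimes V$ compatible with the relations, to the statement that the shifted multiplication maps behave ``transversally'' in a sense dictated by the $\sigma$-action on $E$. From this transversality one would extract, for each $n$, a basis (or a filtration with explicit associated graded) of $V^{\otimes n}$ indexed by tuples of points of $E$ moving along $\sigma$-orbits, with the property that every $R_I^{(n)}$ is spanned by a subset of basis vectors. As in the confluence framework of Theorem \ref{ref-7.2.4-50}, the lattice generated by the $R_i^{(n)}$ then embeds as a sublattice of a Boolean lattice and is therefore distributive.

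The main obstacle is precisely the construction of this adapted basis. Unlike the deformed Yang-Mills case, we cannot reduce to a finite check at $n=4,5$ via Proposition \ref{ref-7.2.4-50bis}, because no natural monomial basis of $V$ makes the $R^{(n)}_i\otimes V^{\otimes j}$ and $V^{\otimes j}\otimes R^{(n)}_i$ confluent on the nose. Instead one has to work generically on $E$, avoiding translates under $\sigma$ of low order, and match the dimensions predicted by Riemann--Roch against the cardinalities of the combinatorial indexing sets for each $R_I^{(n)}$. Once the adapted basis is in place, the rest is formal: each $R_I^{(n)}$ is a span of basis elements, so the lattice $\Lscr_n(A)$ is distributive for every $n$, proving Proposition \ref{ref-7.4.1-52}.
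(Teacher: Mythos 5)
Your overall target is correct — you want a basis of $V^{\otimes n}$ in which every $R_I^{(n)}$ is spanned by a subset of basis vectors, so that the lattice $\Lscr_n(A)$ embeds in a Boolean lattice — and you correctly sense that genericity on $E$ and dimension counts via Riemann--Roch will be involved. But there is a genuine gap: you explicitly defer the construction of the adapted basis ("The main obstacle is precisely the construction of this adapted basis"), and that construction is the whole content of the proof. Without it, the proposal is a restatement of the goal, not a proof.

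You also draw the wrong conclusion from the remark preceding the proposition. The paper does \emph{not} abandon confluence in favour of a purely geometric argument; it uses confluence, but with a twisted basis. The key device you are missing is the grid: for each monomial $\mu$ in $x,y$ one defines $v(\mu)\in V^{\otimes|\mu|}$ by walking the lattice path of $\mu$ in $\ZZ^2$ and at each step placing a generator $x_{-3a+b}$ or $y_{-3b+a}$ depending on the current position $(a,b)$ (these are the elements of $V$ with divisors $\sigma^mP+\sigma^{-m}P'$, $\sigma^nQ+\sigma^{-n}Q'$ from Lemma \ref{ref-7.4.2-53}). The $i$-th tensor factor of $v(\mu)$ therefore depends on the bidegree of the prefix of $\mu$, so $\{v(\mu)\}_{|\mu|=n}$ is \emph{not} a product basis $X^{\times n}$ for any fixed basis $X$ of $V$ — which is exactly why the naive confluence test (and hence Proposition \ref{ref-7.2.4-50bis}) fails, consistent with the cited obstruction from \cite{AS}. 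Nevertheless $\{v(\mu)\}_\mu$ is still indexed by lexicographically ordered monomials, so Theorem \ref{ref-7.2.4-50} applies. Lemma \ref{ref-7.4.2-53} shows that in this basis each $R_i^{(n)}$ is spanned by elements of the reduction form $v(\mu\, yxx\, \mu') - \alpha\, v(\mu\, xxy\, \mu') - \beta\, v(\mu\, xyx\, \mu')$ and $v(\mu\, yyx\, \mu')-\gamma\, v(\mu\, xyy\, \mu')-\delta\, v(\mu\, yxy\, \mu')$, and the confluence check reduces to computing $|\NRed(R_a^{(n)})\cap\NRed(R_b^{(n)})|$ (which is $2^{n-4}$, $0$, or $2^{n-4}$ for $b-a=1$, $2$, $\ge3$) and matching against $\dim(R_a^{(n)}\cap R_b^{(n)})$, the only nontrivial case $b=a+1$ coming from $\dim(R\otimes V\cap V\otimes R)=1$ for AS-regular cubics. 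None of this machinery appears in your proposal, and the ``transversality of shifted multiplication maps'' you invoke is not made precise enough to substitute for it.
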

It follows easily from \cite[Thm 6.11]{AS} that a cubic Type A regular algebra
is not confluent in the sense \S\ref{ref-7.2-47} for any choice of basis $X$. Our arguments
still use confluence but in a more sophisticated way. 

We need some preparatory work. There is a well-defined map
\[
\phi:A_n\r H^0(E,\Lscr\otimes \cdots\otimes \Lscr^{\sigma^{n-1}}):
\overline{a_0\otimes \cdots \otimes a_n}\mapsto a_0a_1^{\sigma}\cdots a_{n-1}^{\sigma^{n-1}}
\]
If $f\in A_n$ then
we write $(f)$ for the divisor of the image of $\phi(f)$ in
$H^0(E,\Lscr\otimes\cdots\otimes \Lscr^{\sigma^{n-1}})$.
If $f\in A_m$, $g\in A_n$ then
\[
(fg)=(\phi(fg))=(\phi(f)\phi(g)^{\sigma^m})=(f)+\sigma^{-m}(g)
\]
Let $x$, $y$ be a basis of $V=H^0(E,\Lscr)$. Put $(x)=P+P'$,
$(y)=Q+Q'$. We write $x_n$, $y_n$ for arbitrary non-zero elements in $V$ with
divisors $\sigma^n P+\sigma^{-n}P'$, $\sigma^n Q+\sigma^{-n} Q'$.

We will say that $P,P',Q,Q'$ are generic if they have pairwise disjoint orbits.
\begin{lemmas} \label{ref-7.4.2-53} Assume that $P,P',Q,Q'$ are generic and $\sigma^4\neq \Id$.
Then there are non-zero scalars $\alpha$, $\beta$, $\gamma$, $\delta$ such
that $R$ has a basis given by
\begin{equation}
\label{ref-7.6-54}
\begin{aligned}
y_0x_1x_{-2}&-\alpha x_0x_{-3} y_{2}-\beta x_0y_1x_{-2}\\
y_0y_{-3} x_{2}&-\gamma x_0y_1y_{-2} -\delta y_0x_1 y_{-2}
\end{aligned}
\end{equation}
\end{lemmas}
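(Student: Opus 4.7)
The plan is to produce two linearly independent elements of $R$ of the form \eqref{ref-7.6-54} and invoke $\dim R = 2$ to conclude they form a basis. Note that $\dim R = 2$ since the target $H^0(E, \Lscr\otimes\Lscr^\sigma\otimes\Lscr^{\sigma^2})$ of the defining map $\phi$ has dimension $6$ by Riemann--Roch, while $\dim A_3 = 6$ for a three-dimensional cubic AS-regular algebra; hence $\phi$ is surjective with kernel of dimension $8-6=2$.

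For the first relation I examine the three images
\[
\phi(y_0\otimes x_1\otimes x_{-2}),\quad \phi(x_0\otimes x_{-3}\otimes y_2),\quad \phi(x_0\otimes y_1\otimes x_{-2})
\]
in $H^0(\Lscr\otimes\Lscr^\sigma\otimes\Lscr^{\sigma^2})$. Using the formula $(fg)=(f)+\sigma^{-\deg f}(g)$ together with $(x_n)=\sigma^n P+\sigma^{-n}P'$, $(y_n)=\sigma^n Q+\sigma^{-n}Q'$, and writing $\sigma$ as translation by $\tau$, a direct computation shows that each of the three effective degree-$6$ divisors on $E$ contains the common divisor
\[
D_1 \;=\; P + (P-4\tau) + P' + Q
\]
of degree $4$. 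Removing $D_1$ identifies the three sections with elements of $H^0(\Nscr_1)$, where $\Nscr_1 := \Lscr\otimes\Lscr^\sigma\otimes\Lscr^{\sigma^2}(-D_1)$ has degree $2$ and hence $h^0(\Nscr_1)=2$. Three vectors in a two-dimensional space are linearly dependent, producing a nontrivial linear relation of the desired shape.

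To see that all three coefficients in this relation are nonzero I verify that no two of the three sections are proportional: the residual divisors (after removing $D_1$) are $\{P'-2\tau,\,Q'\}$, $\{P'+2\tau,\,Q'-4\tau\}$, $\{P',\,Q'-2\tau\}$, and equality of any two of these multisets would force either $4\tau=0$, $2\tau=0$, or $Q'$ to lie in the $\sigma$-orbit of $P'$ --- all excluded by $\sigma^4\neq\id$ (which also forbids $\sigma^2=\id$) and by the disjointness of the $\sigma$-orbits of $P,P',Q,Q'$. This yields nonzero $\alpha,\beta$. The second relation is produced by the identical procedure applied to $\phi(y_0\otimes y_{-3}\otimes x_2)$, $\phi(x_0\otimes y_1\otimes y_{-2})$, $\phi(y_0\otimes x_1\otimes y_{-2})$, whose images share the common degree-$4$ divisor $D_2 = P + Q + (Q-4\tau) + Q'$ and whose residuals $\{Q'+2\tau,\,P'-4\tau\}$, $\{P',\,Q'-2\tau\}$, $\{Q',\,P'-2\tau\}$ are pairwise non-equal under the same hypotheses, giving nonzero $\gamma,\delta$. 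Linear independence of the two resulting relations in $V^{\otimes 3}$ is clear by inspection --- for instance, applying a generic $\lambda^{\otimes 3}\in (V^*)^{\otimes 3}$ produces two polynomial expressions in $\lambda(x_n),\lambda(y_n)$ that cannot be proportional for generic data --- so they span the $2$-dimensional space $R$.

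The main obstacle is the bookkeeping of divisors on $E$: one must carry out the two parallel divisor computations and then systematically rule out each potential coincidence of residual degree-$2$ divisors. This is precisely where the combined hypotheses $\sigma^4\neq\id$ and the genericity of $P, P', Q, Q'$ enter, and one must confirm that no further genericity assumption is needed.
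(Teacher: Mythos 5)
Your approach matches the paper's almost exactly: you use the same divisor bookkeeping, identify a common degree-$4$ subdivisor so that Riemann--Roch forces the three sections into a two-dimensional $H^0$, and rule out proportionality by comparing the residual divisors using $\sigma^4\neq\id$ and orbit-disjointness (your residuals are just the paper's full divisors minus the common $D_i$, so this is the same check). The one step you do not actually establish is the linear independence of the two displayed elements in $V^{\otimes 3}$: the phrase ``applying a generic $\lambda^{\otimes 3}\in(V^\ast)^{\otimes 3}$ produces two polynomial expressions\dots that cannot be proportional for generic data'' is not an argument, and in fact a single $\lambda^{\otimes 3}$ evaluates each relation to a scalar, which cannot detect independence by itself. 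The paper handles this with a short direct computation: suppose $\lambda\cdot(\text{first})+\mu\cdot(\text{second})=0$, restrict to the terms beginning with $x_0$, use that $x_{-3}$ and $y_1$ are linearly independent (genericity) to isolate a term $-\lambda\alpha y_2$, and conclude $\lambda=0$, then $\mu=0$. You should replace your hand-wave with that kind of explicit coefficient argument; otherwise the proof is sound and is the same route as the paper's.
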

\begin{proof} We need to prove that the elements \eqref{ref-7.6-54} are
  linearly independent and contained in $R$. We first show that they
  are linearly independent. Assume this is not the case. I.e.\ there
  are scalars $\lambda$, $\mu$, not both zero such that
\[
\lambda(y_0x_1x_{-2}-\alpha x_0x_{-3} y_{2}-\beta x_0y_1x_{-2})
+\mu (y_0y_{-3} x_{2}-\gamma x_0y_1y_{-2} -\delta y_0x_1 y_{-2})=0
\]
in $V^{\otimes 3}$.
Collecting the terms starting with $x_0$
\begin{align*}
-\lambda\alpha x_{-3}y_2-\lambda\beta y_1x_{-2}-\mu \gamma y_1y_{-2}&=0
\end{align*}
By our genericity assumption $x_{-3}$ and $y_1$ are linearly independent. Thus we get
\begin{align*}
-\lambda\alpha y_2&=0\\
-\lambda\beta x_2-\mu \gamma y_{-2}&=0
\end{align*}
From the first equation we deduce $\lambda=0$ and from the second $\mu=0$.

Now we show that the elements \eqref{ref-7.6-54} are indeed contained
in $R$ (for suitable choices of scalars). To this end we have to show
that the element
\[
y_0x_1^\sigma x_{-2}^{\sigma^2}
\] of $H^0(E,\Lscr\otimes\Lscr^\sigma\otimes \Lscr^{\sigma^2})$) is a
linear combination with non-zero coefficients of
\[
x_0x^\sigma_{-3} y^{\sigma^2}_{2}, \quad x_0y_1^\sigma x_{-2}^{\sigma^2}
\]
and similarly the element
\[
y_0y^\sigma_{-3} x^{\sigma^2}_{2}
\]
is a linear combination  with non-zero coefficients of
\[
x_0y_1^\sigma y_{-2}^{\sigma^2},
\quad y_0x^\sigma_1 y^{\sigma^2}_{-2}
\]
We compute the divisors of all these elements
\begin{align*}
y_0x_1^\sigma x_{-2}^{\sigma^2}:&\qquad Q+Q'+P+\sigma^{-2}P'+\sigma^{-4}P+P'\\
x_0x^\sigma_{-3} y^{\sigma^2}_{2}:&\qquad P+P'+\sigma^{-4}P+\sigma^2P'+Q+\sigma^{-4} Q'\\
x_0y_1^\sigma x_{-2}^{\sigma^2}:&\qquad P+P'+Q+\sigma^{-2}Q'+\sigma^{-4}P+P'\\
y_0y^\sigma_{-3} x^{\sigma^2}_{2}:&\qquad Q+Q'+\sigma^{-4}Q+\sigma^2Q'+P+\sigma^{-4} P'\\
x_0y_1^\sigma y_{-2}^{\sigma^2}:&\qquad P+P'+Q+\sigma^{-2}Q'+\sigma^{-4}Q+Q'\\
 y_0x^\sigma_1 y^{\sigma^2}_{-2}:&\qquad Q+Q'+P+\sigma^{-2}P'+\sigma^{-4}Q+Q'
\end{align*}
All these divisors are different so these elements are pairwise linearly independent.
We also find
\[
y_0x_1^\sigma x_{-2}^{\sigma^2}, \,\, x_0x^\sigma_{-3} y^{\sigma^2}_{2},\,\, x_0y_1^\sigma x_{-2}^{\sigma^2}
\in H^0(E,\Lscr\otimes \Lscr^\sigma\otimes \Lscr^{\sigma^2}(-P-P'-Q-\sigma^{-4}P))
\]
By Riemann Roch the righthand side is a two dimensional vector
space. Hence $y_0x_1^\sigma x_{-2}^{\sigma^2}$, $x_0x^\sigma_{-3}
y^{\sigma^2}_{2}$, $x_0y_1^\sigma x_{-2}^{\sigma^2}$ are linearly dependent. Since they
are not scalar multiples of each other the first one is indeed a linear combination
of the other two and the coefficients are non-zero.

Similarly we find
\[
 y_0y^\sigma_{-3} x^{\sigma^2}_{2}, \,\, x_0y_1^\sigma y_{-2}^{\sigma^2},
\,\, y_0x^\sigma_1 y^{\sigma^2}_{-2}\in H^0(E,\Lscr\otimes \Lscr^\sigma\otimes \Lscr^{\sigma^2}
(-P-Q-Q'-\sigma^{-4}Q))
\]
With the same reasoning as above we deduce that $y_0y^\sigma_{-3}
x^{\sigma^2}_{2}$ is a linear combination of $x_0y_1^\sigma
y_{-2}^{\sigma^2}$, $y_0x^\sigma_1 y^{\sigma^2}_{-2}$ with non-zero
coefficients.
\end{proof}
\begin{proof}[Proof of Proposition \ref{ref-7.4.1-52}]
We will give the proof in the case that $\sigma^4\neq \Id$. If $\sigma^4=\Id$
then $A$ is a so-called ``linear'' algebra which can be easily analyzed directly
(see \cite[Prop.\ 7.4]{ATV1}).

We may clearly assume that our base field is so large that we can select generic
$P,P',Q,Q'$ as in the discussion above.

Let us grade the monomials in $x,y$ by putting $\deg x=(1,0)$, $\deg
y=(0,1)$. For monomials $\mu$ in $x,y$ and $(a,b)\in \ZZ^2$ we
define $v_{a,b}(\mu)\in TV$ by constructing a path in the grid below
starting at $(a,b)$ and ending at $(a,b)+|\mu|$ where the variables
$x$, $y$ indicate which branch we should take at each vertex.
\begin{equation}
\label{ref-7.7-55}
\xymatrix{
&&(2,0)\ar[dr]|{y_2}\\
&(1,0)\ar[ru]|{x_{-3}}\ar[dr]|{y_1} 
&&(2,1)\ar[dr]|{y_{-1}}\\
(0,0)\ar[ru]|{x_0}
\ar[dr]|{y_0}&&
(1,1)
\ar[ru]|{x_{-2}}\ar[dr]|{y_{-2}}&&(2,2)\\
&(0,1) \ar[dr]|{y_{-3}}\ar[ru]|{x_1}
&&(1,2)\ar[ru]|{x_{-1}} &&&\\
&&(0,2)\ar[ru]|{x_2} &&&&
}
\end{equation}
One square looks like
\[
\xymatrix{
& (a+1,b) \ar[dr]^{y_{-3b+a+1}}&\\
(a,b)\ar[ru]^{x_{-3a+b}}\ar[rd]_{y_{-3b+a}} && (a+1,b+1)\\
&(a,b+1) \ar[ru]_{x_{-3a+b+1}}&
}
\]
Thus for example
\[
v_{0,0}(xyyx)=x_0y_1y_{-2}x_{-1}
\]
It is clear that $v_{a,b}(\mu)_\mu$ is a basis for $TV$ for any $a,b$. We extend
$v_{a,b}$ linearly to an isomorphism of graded vector spaces $TV\r TV$.
If
$(a,b)=(0,0)$ then we write $v(\mu)=v_{a,b}(\mu)$.

We claim that $V^{\otimes a}\otimes R\otimes V^{\otimes b}$ has a basis of the
form
\begin{gather}
v(\mu yxx \mu')-\alpha_{\mu} v(\mu xxy\mu')-\beta_{\mu} v(\mu xyx\mu')\\
v(\mu yyx \mu')-\gamma_{\mu} v(\mu xyy\mu')-\delta_{\mu} v(\mu yxy\mu')
\end{gather}
where $\mu$, $\mu'$ run through the monomials of length $a$, $b$ and $\alpha_\mu$,
$\beta_\mu$, $\gamma_\mu$, $\delta_\mu$ are scalars depending on $\mu$.

By Lemma \ref{ref-7.4.2-53} we find that $R$ has a basis of the type
\begin{align*}
r'_{m,n}&=y_n x_{m+1}x_{m-2}-\alpha_{m,n} x_m x_{m-3}y_{n+2}-\beta_{m,n} x_m y_{n+1} x_{m-2}\\
s'_{m,n}&=y_n y_{n-3}x_{m+2}    -\gamma_{m,n}
    x_m y_{n+1}y_{n-2}
-\delta_{m,n} y_n x_{m+1} y_{n-2}
\end{align*}
for each $m,n$. Put
\begin{align*}
r_{m,n}&=y xx-\alpha_{m,n} x xy-\beta_{m,n} x y x\\
s_{m,n}&=yyx-\gamma_{m,n} xy y-\delta_{m,n} yx y
\end{align*}
If $(m,n)=(-3p+q,-3q+p)$ then
\begin{align*}
v_{p,q}(r_{m,n})&=r'_{m,n}\\
v_{p,q}(s_{m,n})&=s'_{m,n}
\end{align*}

Assume now that $\mu,\mu'$ are monomials of degree $(p,q)$, $(p',q')$ respectively
 with $p+q=a$, $p'+q'=b$.
Put $(m,n)=(-3p+q,-3q+p)$. Then
\begin{align*}
v(\mu r_{m,n}\mu')&=v(\mu) v_{p,q}(r_{m,n})v_{p+2,q+1}(\mu')\\
v(\mu s_{m,n}\mu')&=v(\mu) v_{p,q}(s_{m,n})v_{p+1,q+2}(\mu')
\end{align*}
Thus $v(\mu r_{m,n}\mu')$, $v(\mu s_{m,n}\mu')\in V^{\otimes a}\otimes
R\otimes V^{\otimes b}$. It is clear that the elements $\mu
r_{m,n}\mu'$, $\mu s_{m,n}\mu'$ for varying $\mu,\mu'$ are all
linearly independent and their number is equal to the dimension of
$V^{\otimes a}\otimes R \otimes V^{\otimes b}$. So they yield a basis for
$V^{\otimes a}\otimes R\otimes V^{\otimes b}$ after applying
$v(-)$. This finishes the proof of our claim.

We now claim that the $(R_a^{(n)})_a$ are pairwise confluent inside
$V^{\otimes n}$ when the latter is equipped with the ordered basis
$v(\mu)_\mu$ where~$\mu$ runs through the lexicographically ordered
set of monomials of length $n$.  The proposition then follows from
Theorem \ref{ref-7.2.4-50}.

We compute for $1\le a<b\le n-3$
\[
|\NRed(R_a^{(n)})\cap \NRed(R_b^{(n)})|
=
\begin{cases}
2^{n-4}&\text{if $b=a+1$}\\
0&\text{if $b=a+2$}\\
2^{n-4}&\text{if $b\ge a+3$}
\end{cases}
\]
If $b\ge a+3$ then
\[
\dim R_a^{(n)}\cap R_b^{(n)}=2^{n-4}
\]
Hence the only statement that needs to be shown is
\[
\dim R_a^{(n)}\cap R_{a+1}^{(n)}=2^{n-4}
\]
This follows from the fact that for a cubic three-dimensional
AS-regular algebra we have $\dim (R\otimes V\cap V\otimes R)=1$
\cite[Prop. (2.4)]{AS}.

\def\cprime{$'$} \def\cprime{$'$} \def\cprime{$'$}
\ifx\undefined\bysame
\newcommand{\bysame}{\leavevmode\hbox to3em{\hrulefill}\,}
\fi

\end{proof}
\end{document}